\newtheorem{theorem}{Theorem}[section]
\newtheorem{lemma}[theorem]{Lemma}
\newtheorem{proposition}[theorem]{Proposition}
\newtheorem{corollary}[theorem]{Corollary}
\theoremstyle{definition}
\newtheorem{definition}[theorem]{Definition}
\newtheorem{remark}[theorem]{Remark}
\newcommand{\mc}{\mathcal}
\newcommand{\wh}{\widehat}
\numberwithin{equation}{section}
\begin{document}

\title{Higher Affine Connections}

\author{David N. Pham}
\address{School of Mathematical Sciences, Rochester Institute of Technology, Rochester, NY 14623}
\curraddr{}
\email{dpham90@gmail.com}
\thanks{}

\subjclass[2000]{Primary 53B05, 53B15}

\keywords{multivector fields, affine connections, exterior bundle, Schouten-Nijenhuis bracket}

\date{}

\dedicatory{}

\begin{abstract}
For a smooth manifold $M$, it was shown in \cite{BPH} that every affine connection on the tangent bundle $TM$ naturally gives rise to covariant differentiation of multivector fields (MVFs) and differential forms along MVFs.   In this paper, we generalize the covariant derivative of \cite{BPH} and construct covariant derivatives along MVFs which are not induced by affine connections on $TM$.  We call this more general class of covariant derivatives \textit{higher affine connections}.   In addition, we also propose a framework which gives rise to non-induced higher connections; this framework is obtained by equipping the full exterior bundle $\wedge^\bullet TM$ with an associative bilinear form $\eta$.  Since the latter can be shown to be equivalent to a set of differential forms of various degrees, this framework also provides a link between higher connections and multisymplectic geometry.  
\end{abstract}

\maketitle
\section{Introduction}
Let $M$ be a manfiold.  It was shown in \cite{BPH} that every affine connection $\nabla$ on the tangent bundle $TM$ naturally gives rise to covariant differentiation of multivector fields (MVFs) and differential forms along MVFs.  For covariant differentiation of MVFs along MVFs, the covariant derivative of \cite{BPH} (which we will again denote as $\nabla$) satisfies
\begin{align}
\label{BPHPropertyA}
\nabla_{X\wedge Y} Z&=(-1)^kX \wedge \nabla_Y Z+(-1)^{(k-1)l} Y\wedge \nabla_X Z\\
\label{BPHPropertyB}
\nabla_X(Y\wedge Z)&=(\nabla_XY)\wedge Z+(-1)^{(k-1)l}Y\wedge\nabla_X Z
\end{align}
for $X\in \Gamma(\wedge^k TM)$, $Y\in \Gamma(\wedge^l TM)$, and $Z\in \Gamma(\wedge^\bullet TM)$.  Any covariant derivative of MVFs along MVFs which satisfies (\ref{BPHPropertyA}) and (\ref{BPHPropertyB}) is necessarily induced by an affine connection on the tangent bundle.  In this paper, we consider a class of covariant derivatives of MVFs along MVFs which satisfies all the main properties of those of \cite{BPH} except possibly (\ref{BPHPropertyA}) and (\ref{BPHPropertyB}).  We call such a covariant derivative a \textit{higher affine connection}.  Hence, the covariant derivative of MVFs along MVFs from \cite{BPH} can be seen as a special case of a higher affine connection. 

For covariant differentiation of differential forms along MVFs, the covariant derivative of \cite{BPH} has several nice properties which are summarized in Theorem 4.2 of \cite{BPH}.  To describe the approach of \cite{BPH}, we start with a decomposable $k$-vector field $X=X_1\wedge \cdots \wedge X_k$ and an affine connection $\nabla$ on $TM$.  For a differential form $\omega\in \Omega^\bullet(M)$, we define
\begin{equation}
\label{BPHDiffForm1}
\nabla_X \omega=\sum_{j=1}^k (-1)^{j+1}i_{X[j]} (\nabla_{X_j}\omega),
\end{equation}
where $X[j]:=X_1\wedge \cdots \wedge \widehat{X}_j\wedge \cdots \wedge X_k$ (the ``hat" denoting omission) and $i_{X[j]}$ is the interior product by $X[j]$.  For $f\in C^\infty(M)$ and $\sigma\in \mathfrak{S}_k$, a direct calculation shows that
\begin{equation}
\label{BPHDiffForm2}
\nabla_{fX^{\sigma}}\omega=\mbox{sgn}(\sigma)f\nabla_X\omega,
\end{equation}
where $X^\sigma:=X_{\sigma(1)}\wedge \cdots \wedge X_{\sigma(k)}$.  Since every $k$-vector field is locally a finite sum of decomposable $k$-vector fields, (\ref{BPHDiffForm2}) implies that the above definition extends to all $X\in \Gamma(\wedge^k TM)$ by $C^\infty(M)$-linearity.  Note that if $\omega$ is an $l$-form and $X$ a $k$-vector field with $l\ge k-1$, then $\nabla_X \omega$ is an $(l-k+1)$-form.   

Since a higher connection is not completely defined by an affine connection on $TM$, (\ref{BPHDiffForm1}) is not directly applicable to higher connections.  Even so, we show that higher connections do indeed allow for covariant differentiation of differential forms along MVFs.  Moreover, the result has properties similar to those of \cite{BPH}.

The motivation for discarding condition (\ref{BPHPropertyA}) stems from two sources: \textit{generalized geometry} \cite{H1} \cite{H2} and the basic idea at the root of \textit{string theory} \cite{GSW} \cite{Z}.  In generalized geometry, one replaces the tangent bundle with $TM \oplus T^\ast M$ and the Lie bracket with the Courant bracket.  In this setting, one looks for geometric structures on $TM \oplus T^\ast M$ which are analogues of the familiar objects one encounters in differential geometry.   In string theory, the notion of a point particle is replaced by one dimensional extended objects called \textit{strings}.  As a consequence of this, a worldline (i.e., the path that a point particle makes  through spacetime) is generalized to a 2-dimensional worldsheet (i.e., the surface that a string sweeps out as it propagates).  Let $X: \Sigma\rightarrow M$ be a worldsheet map, where $\Sigma \subset \mathbb{R}^2$ has coordinates $(\tau,\sigma)$, and let $p$ be a point on $X$.  One can think of $X$ as a higher dimensional worldline with the following ``tangent vectors" at $p$:
\begin{equation}
\nonumber
\partial_\tau X|_{(\tau_0,\sigma_0)}, \hspace*{0.1in} \partial_\sigma X|_{(\tau_0,\sigma_0)},\hspace*{0.1in} (\partial_\tau X \wedge \partial_\sigma X)_{(\tau_0,\sigma_0)},
\end{equation} 
where $X(\tau_0,\sigma_0)=p$ and $\partial_\tau X:=\frac{\partial X}{\partial \tau}$, $\partial_\sigma X:=\frac{\partial X}{\partial \sigma}$.   In doing so, one regards $\wedge^2 TM$ as part of an extended tangent bundle.  

For MVFs, the natural analogue of the Lie bracket is the Schouten-Nijenhuis bracket (SNB) \cite{Mar} \cite{Nij}.  The SNB of two multivector fields of degrees $k$ and $l$ is a multivector field of degree $k+l-1$.  With the SNB in the role of the Lie bracket, the idea of generalized geometry suggests that we must consider the full exterior bundle $\wedge^\bullet TM$ in the role of the tangent bundle as opposed to just $TM\oplus \wedge^2TM$.  Our basic ``philosophy" then is to treat $\wedge^k T_pM$ ($p\in M$, $k\ge 2$) as part of an extended tangent space on $M$.  Hence, a $k$-vector $v \in \wedge^k T_pM$ should be regarded as a new kind of tangent vector.   If we apply this viewpoint to the problem of covariant differentiation of MVFs along MVFs, one would expect $X\wedge Y$ and $Y\wedge Z$ to play a more explict role on the right side of (\ref{BPHPropertyA}) and (\ref{BPHPropertyB}) respectively.  However, for this to be true, conditions (\ref{BPHPropertyA}) and (\ref{BPHPropertyB}) must be relaxed; the upshot of this is the notion of a higher affine connection.   

A natural question in all this is where do higher connections actually arise.  More specifically, what framework requires the notion of \textit{non-induced} higher connections (i.e., the higher connecitons which do not satisfy (\ref{BPHPropertyA}) or (\ref{BPHPropertyB}))?  In this paper, we propose a solution to this question.  The answer comes by equipping $\wedge^\bullet TM$ with a smooth bilinear form $\eta$ which is associative in the sense that
\begin{equation}
\label{FrobeniusIntro}
\eta(x\wedge y,z)=\eta(x,y\wedge z)
\end{equation}
for all $x,y,z\in \wedge^\bullet T_pM$, $p\in M$.   These associative bilinear forms are shown to be in one to one correspondence with the space of differential forms: $\Omega^\bullet(M):=\bigoplus_{i}\Omega^i(M)$. This fact allows one to define a covariant derivative of $\eta$ with respect to a higher connection.   Naturally, one would like to find a higher connection for which 
\begin{equation}
\label{IntroParallel}
\nabla\eta \equiv 0.  
\end{equation}
A direct calculation shows that, in general, the induced higher connections (i.e., the ones that satisfy both (\ref{BPHPropertyA}) and (\ref{BPHPropertyB})) are incapable of satisfying (\ref{IntroParallel}).  By requiring $\nabla\eta\equiv 0$, the notion of non-induced higher connections becomes a necessary one.  Furthermore, since non-induced higher connections arise from associative bilinear forms on $\wedge^\bullet TM$, and the latter is equivalent to a set of differential forms of various degrees, this viewpoint also provides a way of linking higher connections to multisymplectic geometry \cite{CIL} \cite{Rog}.

The rest of the paper is organized as follows.  In section $2$, we review the basic machinery of MVFs and set up the notation we will use for the rest of the paper.  In section $3$, we introduce the notion of higher affine connections and prove a classification theorem for them (see Theorem \ref{HigherConnectionData}).  In addition, the notion of \textit{higher torsion} is also introduced.  In section $4$, we define the covariant derivative of differential forms along MVFs in terms of higher connections and examine the properties of this construction.   In section $5$, we relate non-induced higher connections to associative bilinear forms on the full exterior bundle $\wedge^\bullet TM$.  Finally, in section $6$, we conlcude the paper with some closing remarks and directions for future work.  

\section{Preliminaries}
\subsection{Some Multilinear Algebra}
\noindent In this brief section, we recall some basic results from multilinear algebra.  Throughout this section, $V$ is a finite dimensional vector space over $\mathbb{R}$ of dimension $m>0$.   
\begin{definition}
A $k$-vector $v\in \wedge^k V$ is \textit{decomposable} if it can be expressed as $v=v_1\wedge \cdots\wedge v_k$ for some $v_i\in V$, $i=1,\dots, k$.
\end{definition}
\begin{theorem}
\label{UniversalPropExterior}
Let $V^{\times k}:=V\times \cdots \times V$ ($k$ times) and let $U$ be a finite dimensional vector space over $\mathbb{R}$.  For any alternating multilinear map, $\varphi:V^{\times k}\rightarrow U$, there exists a unique linear map $\widetilde{\varphi}: \wedge^k V\rightarrow U$ such that 
\begin{equation}
\nonumber
\widetilde{\varphi}(v_1\wedge \cdots \wedge v_k)=\varphi(v_1,\dots, v_k)
\end{equation}
for all $v_1,\dots, v_k\in V$.  
\end{theorem}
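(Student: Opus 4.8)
The plan is to obtain $\widetilde{\varphi}$ by factoring $\varphi$ first through the $k$-th tensor power and then through the quotient that defines $\wedge^k V$. First recall the universal property of $\otimes^k V$: since $\varphi\colon V^{\times k}\to U$ is multilinear, there is a unique linear map $\bar{\varphi}\colon \otimes^k V\to U$ with $\bar{\varphi}(v_1\otimes\cdots\otimes v_k)=\varphi(v_1,\dots,v_k)$ for all $v_i\in V$. Next, write $\wedge^k V=\otimes^k V/I_k$, where $I_k\subset \otimes^k V$ is the subspace spanned by all tensors $v_1\otimes\cdots\otimes v_k$ for which $v_i=v_j$ for some $i\neq j$, and let $\pi\colon \otimes^k V\to \wedge^k V$ be the quotient projection, so that $\pi(v_1\otimes\cdots\otimes v_k)=v_1\wedge\cdots\wedge v_k$.

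The key step is to check that $\bar{\varphi}$ vanishes on $I_k$. On a spanning tensor $v_1\otimes\cdots\otimes v_k$ with $v_i=v_j$ ($i\neq j$) we have $\bar{\varphi}(v_1\otimes\cdots\otimes v_k)=\varphi(v_1,\dots,v_k)=0$ because $\varphi$ is alternating; since these tensors span $I_k$ and $\bar{\varphi}$ is linear, $\bar{\varphi}|_{I_k}=0$. Hence $\bar{\varphi}$ descends to a well-defined linear map $\widetilde{\varphi}\colon \wedge^k V\to U$ with $\widetilde{\varphi}\circ\pi=\bar{\varphi}$, and evaluating on $\pi(v_1\otimes\cdots\otimes v_k)$ gives $\widetilde{\varphi}(v_1\wedge\cdots\wedge v_k)=\varphi(v_1,\dots,v_k)$, as required. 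For uniqueness, note that the decomposable $k$-vectors span $\wedge^k V$ (for any basis $\{e_i\}$ of $V$ the set $\{e_{i_1}\wedge\cdots\wedge e_{i_k}: i_1<\cdots<i_k\}$ is a basis), so a linear map out of $\wedge^k V$ is determined by its values on them; thus the prescribed identity pins down $\widetilde{\varphi}$ completely.

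I do not expect a genuine obstacle here: the only point that actually uses the hypothesis is the vanishing of $\bar{\varphi}$ on $I_k$, which is immediate from ``alternating''. If a construction internal to $\wedge^k V$ that avoids tensor products is preferred, one can instead fix a basis $\{e_i\}$ of $V$, define $\widetilde{\varphi}$ on the basis $\{e_{i_1}\wedge\cdots\wedge e_{i_k}\}_{i_1<\cdots<i_k}$ of $\wedge^k V$ by $\widetilde{\varphi}(e_{i_1}\wedge\cdots\wedge e_{i_k}):=\varphi(e_{i_1},\dots,e_{i_k})$, extend linearly, and then verify $\widetilde{\varphi}(v_1\wedge\cdots\wedge v_k)=\varphi(v_1,\dots,v_k)$ by expanding each $v_r$ in the basis and invoking multilinearity together with the alternating property of both sides. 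The slightly fiddly part in that route is the sign bookkeeping when reordering repeated or out-of-order indices, but it is routine; the tensor-quotient argument above avoids it entirely.
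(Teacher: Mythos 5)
Your argument is correct and follows essentially the same route as the paper's proof: factor $\varphi$ through $V^{\otimes k}$ via the universal property of the tensor product, observe that the alternating hypothesis forces the induced map to vanish on the subspace spanned by tensors with a repeated factor, descend to the quotient $\wedge^k V$, and deduce uniqueness from the fact that decomposable $k$-vectors span $\wedge^k V$. No issues to report.
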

\begin{proof}
Let $V^{\otimes k}:=V\otimes \cdots \otimes V$ ($k$ times).  Since $\varphi$ is multilinear, the universal property of the tensor product gives a unique linear map $\overline{\varphi}: V^{\otimes k}\rightarrow U$ such that $\overline{\varphi}(v_1\otimes \cdots \otimes v_k)=\varphi(v_1,\dots, v_k)$.  Let $I\subset V^{\otimes k}$ be the space spanned by elements of the form $v_1\otimes \cdots\otimes v_k$ where $v_1,\dots, v_k\in V$ and $v_i=v_j$ for some $i\neq j$.  Since $\varphi$ is alternating, we have $\overline{\varphi}|_I=0$.  Hence, $\overline{\varphi}$ induces a linear map from $\wedge^k V:=V^{\otimes k}/I$ to $U$ which satisfies 
\begin{equation}
\nonumber
\widetilde{\varphi}(v_1\wedge \cdots \wedge v_k)=\overline{\varphi}(v_1\otimes \cdots \otimes v_k)=\varphi(v_1,\dots, v_k).
\end{equation}
Since $\widetilde{\varphi}$ is linear and $\wedge^k V$ is spanned by decomposable $k$-vectors, $\widetilde{\varphi}$ is necessarily unique.  
\end{proof}
\begin{proposition}
\label{Multilinear1}
Let $W$ and $W'$ be any $k$-dimensional subspaces of $V$ and let $\{w_i\}$ and $\{w'_i\}$ be any bases on $W$ and $W'$ respectively.  Then
\begin{equation}
\nonumber
w_1\wedge \cdots \wedge w_k=\lambda w_1'\wedge \cdots \wedge w_k'
\end{equation}
for some $\lambda \in \mathbb{R}$ iff $W=W'$.  
\end{proposition}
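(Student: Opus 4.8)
The plan is to prove the two implications separately; the ``if'' direction is a change-of-basis computation, and the ``only if'' direction rests on the single basic fact that a wedge product of linearly independent vectors is nonzero (equivalently, that $v\wedge u_1\wedge\cdots\wedge u_r=0$ forces $v\in\mathrm{span}\{u_1,\dots,u_r\}$ whenever $u_1,\dots,u_r$ are independent). I would first record that fact as a sub-observation: if $u_1,\dots,u_r\in V$ are linearly independent, complete them to a basis of $V$; then $u_1\wedge\cdots\wedge u_r$ is one of the standard basis vectors of $\wedge^r V$, hence nonzero. (If $r$ exceeds $m$ there is nothing in the proposition to prove, since then $k=m$ and $W=W'=V$ automatically, so we may always assume we have enough room to complete.)

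For the ``if'' direction, assume $W=W'$. Then $\{w_i\}$ and $\{w_i'\}$ are two bases of the same $k$-dimensional space, so there is an invertible matrix $A=(a_{ij})$ with $w_i'=\sum_j a_{ij}w_j$. Applying multilinearity and antisymmetry of the wedge — formally, Theorem \ref{UniversalPropExterior} applied to the alternating map $(v_1,\dots,v_k)\mapsto v_1\wedge\cdots\wedge v_k$, or just a direct expansion — gives $w_1'\wedge\cdots\wedge w_k'=(\det A)\,w_1\wedge\cdots\wedge w_k$. Since $\det A\neq 0$, the claim holds with $\lambda=1/\det A$.

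For the ``only if'' direction, suppose $w_1\wedge\cdots\wedge w_k=\lambda\,w_1'\wedge\cdots\wedge w_k'$. First, $\lambda\neq 0$: otherwise $w_1\wedge\cdots\wedge w_k=0$, contradicting the sub-observation applied to the independent set $\{w_i\}$. Now fix any $w\in W$. Writing $w$ as a linear combination of the $w_i$ and expanding, $w\wedge w_1\wedge\cdots\wedge w_k=0$; multiplying the hypothesis by $w$ then gives $\lambda\,w\wedge w_1'\wedge\cdots\wedge w_k'=0$, hence $w\wedge w_1'\wedge\cdots\wedge w_k'=0$ since $\lambda\neq 0$. If $w\notin W'$, then $\{w,w_1',\dots,w_k'\}$ would be linearly independent, so by the sub-observation $w\wedge w_1'\wedge\cdots\wedge w_k'\neq 0$, a contradiction. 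Therefore $w\in W'$, which shows $W\subseteq W'$; since $\dim W=\dim W'=k$, we conclude $W=W'$.

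The only real content is the nonvanishing of a wedge of independent vectors, so I expect that to be the main point to pin down carefully; once it is available, both implications are short.
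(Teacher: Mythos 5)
Your proof is correct and takes essentially the same approach as the paper: the ``if'' direction is the same change-of-basis determinant computation, and the ``only if'' direction is the same wedge-with-a-test-vector argument resting on the nonvanishing of a wedge of linearly independent vectors. The only cosmetic difference is that the paper draws its test vector from $W'\setminus W$ and wedges it against both sides of the hypothesis (which lets it avoid checking $\lambda\neq 0$), whereas you take an arbitrary $w\in W$ and show $w\in W'$, using $\lambda\neq 0$; both versions are fine.
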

\begin{proof}
Let $w:=w_1\wedge \cdots \wedge w_k$ and $w':=w_1'\wedge \cdots \wedge w_k'$.\\
($\Rightarrow$)  Suppose  $w=\lambda w'$ for some $\lambda\in \mathbb{R}$.  If $k=\dim V$, then $W=W'=V$.   Now suppose that $k<\dim V$.  If $W\neq W'$, then there exists a nonzero $w_{k+1}'\in W'$ which is not in $W$.   Since $w_{k+1}'$ is a linear combination of $\{w_1',\dots, w_k'\}$, we have $w'\wedge w_{k+1}'=0$.  On the other hand, $w\wedge w_{k+1}'\neq 0$.  This contradicts the assumption that $w=\lambda w'$. Hence, $W=W'$.  

($\Leftarrow$) Suppose $W=W'$.  Then $\{w_i\}$ can be expressed as a linear combination of $\{w_i'\}$:
\begin{equation}
\nonumber
w_j=\sum_{i=1}^k a_{ij} w_i'.
\end{equation}
A direct calculation shows that $w=\det(a_{ij})w'$.
\end{proof}

\noindent Proposition \ref{Multilinear1} shows that any subspace $W\subset V$ of dimension $k$ is represented by a decomposable $k$-vector which is unique up to a multiplicative constant.  This fact immediately implies the following:
\begin{corollary}
\label{Multilinear1Cor}
 Let $V$ be a finite dimensional vector space.  The mapping $\varphi: G(k,V)\rightarrow \mathbb{P}(\wedge^k V)$ given by 
\begin{equation}
W=\langle w_1,\dots,w_k\rangle \mapsto [w_1\wedge \cdots \wedge w_k],
\end{equation}
is well-defined and injective.  Here, $G(k,V)$ is the Grassmanian, the set of all $k$-dimensional subspaces of $V$ and $\mathbb{P}(\wedge^k V)$ is the projectivization of $\wedge^k V$. 
\end{corollary}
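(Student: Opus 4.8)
The plan is to deduce everything directly from Proposition \ref{Multilinear1}, which already does essentially all of the work; the corollary is just a repackaging of it in terms of the Grassmannian and a projective space. First I would verify that $\varphi$ is well-defined, which has two parts. For the class $[w_1 \wedge \cdots \wedge w_k]$ to make sense we need $w_1 \wedge \cdots \wedge w_k \neq 0$: since $\{w_1,\dots,w_k\}$ is a basis of $W$ it is linearly independent in $V$, hence extends to a basis of $V$, and so $w_1 \wedge \cdots \wedge w_k$ is (up to reordering) one of the standard basis vectors of $\wedge^k V$ and in particular nonzero. For independence of the chosen basis, suppose $\{w_i\}$ and $\{w_i'\}$ are two bases of the same subspace $W$; applying the $(\Leftarrow)$ direction of Proposition \ref{Multilinear1} with $W'=W$ gives $w_1 \wedge \cdots \wedge w_k = \det(a_{ij})\, w_1' \wedge \cdots \wedge w_k'$, where $(a_{ij})$ is the change-of-basis matrix. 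This matrix is invertible, so $\det(a_{ij}) \neq 0$, and the two decomposable vectors determine the same point of $\mathbb{P}(\wedge^k V)$. Hence $\varphi$ is well-defined.

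Next I would prove injectivity. Suppose $\varphi(W) = \varphi(W')$ with $W = \langle w_1,\dots,w_k\rangle$ and $W' = \langle w_1',\dots,w_k'\rangle$. By the definition of the projectivization, $[w_1 \wedge \cdots \wedge w_k] = [w_1' \wedge \cdots \wedge w_k']$ means $w_1 \wedge \cdots \wedge w_k = \lambda\, w_1' \wedge \cdots \wedge w_k'$ for some nonzero $\lambda \in \mathbb{R}$. The $(\Rightarrow)$ direction of Proposition \ref{Multilinear1} then yields $W = W'$, which is exactly injectivity.

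I do not expect any genuine obstacle here: the substantive content is entirely contained in Proposition \ref{Multilinear1}. The only point not literally stated there is the nonvanishing of a decomposable $k$-vector built from a linearly independent set, but this is a standard fact about exterior powers — it follows, for instance, from Theorem \ref{UniversalPropExterior} applied to a determinant-type alternating form, or simply from the description of a basis of $\wedge^k V$ in terms of a basis of $V$. So the proof will be short, essentially two one-line appeals to the two directions of Proposition \ref{Multilinear1}, together with this standard nonvanishing remark.
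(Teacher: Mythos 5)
Your proof is correct and follows the same route as the paper, which simply observes that Proposition \ref{Multilinear1} (a subspace is represented by a decomposable $k$-vector unique up to a scalar) immediately yields the corollary. Your write-up just makes explicit the well-definedness and injectivity checks that the paper leaves as "immediate," including the standard nonvanishing fact, and there is no gap.
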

\begin{remark}
 The map $\varphi$ in Corollary \ref{Multilinear1Cor} is called the \textit{Pl\"{u}cker embedding}.
\end{remark}

\begin{lemma}
\label{Multilinear4}
Let $W\subset V$ be a subspace of $V$ of dimension $k>0$.  Let $w$ be any decomposable $k$-vector which represents $W$.  For any $x\in \wedge^m V$, there exists a decomposable $(m-k)$-vector $u$ such that $x=u\wedge w$.
\end{lemma}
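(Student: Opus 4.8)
The plan is to reduce the statement to a computation with a single basis of $V$ adapted to $W$. First I would choose a basis $\{w_1,\dots,w_k\}$ of $W$ and extend it to a basis $\{w_1,\dots,w_m\}$ of all of $V$. Since $\wedge^m V$ is one-dimensional and $w_1\wedge\cdots\wedge w_m\neq 0$ (the $w_i$ forming a basis of $V$), the given $x\in\wedge^m V$ must be a scalar multiple: $x=\lambda\,w_1\wedge\cdots\wedge w_m$ for some $\lambda\in\mathbb{R}$.

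Next I would move the ``complementary'' factor to the front. Sliding $w_{k+1}\wedge\cdots\wedge w_m$ past $w_1\wedge\cdots\wedge w_k$ costs only the sign $\varepsilon:=(-1)^{k(m-k)}$, so that
\[
x=\varepsilon\lambda\,(w_{k+1}\wedge\cdots\wedge w_m)\wedge(w_1\wedge\cdots\wedge w_k).
\]
Finally, since $w$ is a decomposable $k$-vector representing $W$ and $w_1\wedge\cdots\wedge w_k$ is as well, Proposition \ref{Multilinear1} yields a nonzero constant $c\in\mathbb{R}$ with $w_1\wedge\cdots\wedge w_k=c\,w$. Substituting gives $x=(\varepsilon\lambda c\,w_{k+1}\wedge\cdots\wedge w_m)\wedge w$, and since a scalar multiple of a decomposable multivector is again decomposable, we may take $u:=\varepsilon\lambda c\,w_{k+1}\wedge\cdots\wedge w_m\in\wedge^{m-k}V$. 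In the extreme case $k=m$ there is no complementary factor and the argument degenerates to $x=\lambda c\,w$ with $u=\lambda c$, using the convention that elements of $\wedge^0 V=\mathbb{R}$ count as decomposable.

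There is no real obstacle here: the only points needing a moment's care are that $\wedge^m V$ is genuinely one-dimensional (so the scalar $\lambda$ exists), that $w\neq 0$ so that $c$ is well-defined and Proposition \ref{Multilinear1} applies, and the routine bookkeeping of the rearrangement sign $\varepsilon$.
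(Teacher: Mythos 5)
Your proof is correct and follows essentially the same route as the paper's: extend a basis of $W$ to a basis of $V$, use that $\wedge^m V$ is one-dimensional to write $x$ as a scalar multiple of the top wedge, and absorb the scalar and rearrangement sign into $u$. The only cosmetic difference is that the paper takes the factors of $w$ itself as the basis of $W$ (justified by Proposition \ref{Multilinear1}), which avoids your extra comparison constant $c$.
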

\begin{proof}
Let $w=w_1\wedge\cdots\wedge w_k$ be any decomposable $k$-vector which represents $W$.  If $k=m$, then $W=V$ and any $x\in \wedge^m V$ is of the form $x=\lambda w$ for some $\lambda\in \mathbb{R}$.  In this case, we just take $u=\lambda\in \wedge^0 V:=\mathbb{R}$.  Now suppose that $k<m$.  By Proposition \ref{Multilinear1}, $\{w_i\}$ is a basis for $W$.  Extend $\{w_i\}_{i=1}^k$ to a basis on $V$: $\{w_1,\dots, w_k, e_1,\dots,e_{m-k}\}$.  Let $e=e_1\wedge \cdots \wedge e_{m-k}$.  Then $e\wedge w$ generates $\wedge^m V$.  Consequently, any $x\in \wedge^m V$ is of the form $x=\lambda e\wedge w$ for some $\lambda\in \mathbb{R}$.  Setting $u=\lambda e$ proves the lemma.
\end{proof}

\begin{proposition}
\label{Multilinear5}
Let $W$ and $W'$ be any two subspaces of $V$ of dimensions $k>0$ and $k'>0$ respectively.   Let $w$ and $w'$ be any decomposable $k$ and $k'$-vectors which represent $W$ and $W'$ repsectively.  Then $\dim W\cap W'>0$ iff $w\wedge w'=0$.
\end{proposition}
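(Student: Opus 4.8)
The plan is to prove both implications directly, the key analytic input being the standard fact that a wedge $v_1\wedge\cdots\wedge v_r\in\wedge^r V$ is nonzero exactly when $v_1,\dots,v_r$ are linearly independent, together with Proposition \ref{Multilinear1} (which says that a decomposable $k$-vector representing a $k$-dimensional subspace is determined by that subspace up to a nonzero scalar, and in particular is built from a basis of it).

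For the forward direction, I would suppose $\dim(W\cap W')>0$ and fix a nonzero $v\in W\cap W'$. Extend $v$ to a basis $\{v,w_2,\dots,w_k\}$ of $W$ and to a basis $\{v,w'_2,\dots,w'_{k'}\}$ of $W'$. By Proposition \ref{Multilinear1} there are nonzero scalars $\lambda,\mu$ with $w=\lambda\,v\wedge w_2\wedge\cdots\wedge w_k$ and $w'=\mu\,v\wedge w'_2\wedge\cdots\wedge w'_{k'}$. Then $w\wedge w'$ is a scalar multiple of a wedge of vectors in which $v$ appears twice; reordering introduces only a sign and exposes the factor $v\wedge v=0$, so $w\wedge w'=0$.

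For the converse I would argue by contrapositive: assume $\dim(W\cap W')=0$. Then a basis $\{w_1,\dots,w_k\}$ of $W$ together with a basis $\{w'_1,\dots,w'_{k'}\}$ of $W'$ is a linearly independent family in $V$ (the usual consequence of $W\cap W'=0$; in particular $k+k'\le m$), hence $w_1\wedge\cdots\wedge w_k\wedge w'_1\wedge\cdots\wedge w'_{k'}\neq 0$. Since by Proposition \ref{Multilinear1} $w$ and $w'$ are nonzero scalar multiples of $w_1\wedge\cdots\wedge w_k$ and $w'_1\wedge\cdots\wedge w'_{k'}$ respectively, it follows that $w\wedge w'\neq 0$.

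The only nonroutine ingredient is the vanishing criterion for a wedge of vectors, which I would either cite as standard multilinear algebra or derive by completing a maximal linearly independent subfamily to a basis of $V$ and reading off coordinates in the induced basis of $\wedge^r V$. The one point requiring a moment's care is the case $k+k'>m$: there $\dim(W\cap W')>0$ is automatic, so only the forward implication is in play, and the argument above goes through unchanged since it never used $k+k'\le m$.
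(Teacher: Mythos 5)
Your proof is correct and follows essentially the same route as the paper's: both directions ultimately come down to the standard fact that a wedge of vectors in $V$ vanishes iff the vectors are linearly dependent, combined with Proposition \ref{Multilinear1}. The only cosmetic differences are that in the forward direction you factor out a single common vector $v$ and expose $v\wedge v=0$, whereas the paper invokes Lemma \ref{Multilinear4} to write $w=u\wedge w''$ and $w'=u'\wedge w''$ with $w''$ representing all of $W\cap W'$, and in the converse you argue by contrapositive where the paper argues directly via $\dim W\cap W'=\dim W+\dim W'-\dim(W+W')$.
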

\begin{proof}
($\Rightarrow$) Suppose $l:=\dim W\cap W'>0$.  Let $w''$ be the decomposable $l$-vector which represents $W\cap W'$.  Since $W\cap W'$ is a subspace of both $W$ and $W'$, Lemma \ref{Multilinear4} implies that there exists a decomposable $(k-l)$-vector $u$ and a decomposable $(k'-l)$-vector $u'$ such that $w=u\wedge w''$ and $w'=u'\wedge w''$.  This implies that $w\wedge w'=0$.

($\Leftarrow$) Suppose $w\wedge w'=0$.  Since $w$ and $w'$ are decomposable, $w$ and $w'$ can be expressed as 
\begin{equation}
\nonumber
w=w_1\wedge \cdots \wedge w_k,\hspace*{0.1in} w'=w_1'\wedge \cdots \wedge w_{k'}'
\end{equation}
where $\{w_i\}$ and $\{w_i'\}$ are bases of  $W$ and $W'$ respectively.  The subspace $W+W'$ is spanned by $\{w_1,\dots,w_k,w_1',\dots, w'_{k'}\}$.  Since $w\wedge w'=0$, the aforementioned set cannot be linearly independent.  
Hence, $\dim (W+W')<\dim W+\dim W'$.  Since
\begin{equation}
\nonumber
\dim W\cap W' = \dim W+\dim W'-\dim(W+W'),
\end{equation}
we have $\dim W\cap W'>0$.
\end{proof}

\begin{proposition}
\label{Multilinear3}
Let $\{v_i^{(k)}\}_{i=1}^t$ be any linearly indpendent set of $k$-vectors in $\wedge^k V$ (not necessarily decomposable).  Then there exists an $(m-k)$-vector $u\in \wedge^{m-k}V$ (not necessarily decomposable) such that 
\begin{equation}
\nonumber
u\wedge v_1^{(k)}\neq 0,\hspace*{0.1in}u\wedge v_i^{(k)}=0~\mbox{for} ~i=2,\dots, t
\end{equation}
\end{proposition}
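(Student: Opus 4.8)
The plan is to reduce the statement to elementary linear algebra by exploiting the nondegenerate pairing between $\wedge^k V$ and $\wedge^{m-k}V$ furnished by the wedge product into the one-dimensional space $\wedge^m V$. Fix a basis $\{e_1,\dots,e_m\}$ of $V$ and set $\omega:=e_1\wedge\cdots\wedge e_m$, a generator of $\wedge^m V$. For a multi-index $I=(i_1<\cdots<i_k)$ write $e_I:=e_{i_1}\wedge\cdots\wedge e_{i_k}$, and let $I^c$ denote the complementary multi-index; then $\{e_I:|I|=k\}$ is a basis of $\wedge^k V$ and $\{e_J:|J|=m-k\}$ is a basis of $\wedge^{m-k}V$. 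The first step is the elementary observation that $e_J\wedge e_I=0$ unless $J=I^c$, while $e_{I^c}\wedge e_I=\pm\omega$. Hence the bilinear form $B:\wedge^{m-k}V\times\wedge^k V\to\mathbb{R}$ defined by $u\wedge v=B(u,v)\,\omega$ is represented, in these bases, by a signed permutation matrix, and is therefore nondegenerate.

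Next I would use the nondegeneracy of $B$ to produce, for the given linearly independent set $\{v_i^{(k)}\}_{i=1}^t$, a companion family in $\wedge^{m-k}V$. Extend $\{v_i^{(k)}\}_{i=1}^t$ to a basis $\{v_1^{(k)},\dots,v_N^{(k)}\}$ of $\wedge^k V$, where $N=\binom{m}{k}=\dim\wedge^k V$. Since $B$ is a perfect pairing, the map $u\mapsto B(u,\cdot)$ is an isomorphism $\wedge^{m-k}V\xrightarrow{\sim}(\wedge^k V)^{*}$, so there is a unique dual basis $\{u_1,\dots,u_N\}$ of $\wedge^{m-k}V$ characterized by $B(u_a,v_b^{(k)})=\delta_{ab}$, i.e.\ $u_a\wedge v_b^{(k)}=\delta_{ab}\,\omega$.

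Taking $u:=u_1$ then finishes the argument: $u\wedge v_1^{(k)}=\omega\neq 0$, while $u\wedge v_i^{(k)}=0$ for $i=2,\dots,t$, as required. The only point needing genuine verification is the nondegeneracy of $B$, and this is immediate from the signed-permutation-matrix description above; everything else is bookkeeping. (Equivalently, one may skip the dual basis and simply let $u$ be the preimage under $u\mapsto B(u,\cdot)$ of the coordinate functional on $\wedge^k V$ dual to $v_1^{(k)}$ relative to the extended basis.)
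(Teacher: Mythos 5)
Your argument is correct and is essentially the paper's own proof: both identify $\wedge^{m-k}V$ with $(\wedge^k V)^{\ast}$ via the wedge pairing into $\wedge^m V$, verify nondegeneracy through the signed-permutation behavior of the pairing on the standard bases $\{e_J\}$, $\{e_I\}$, and then pull back a linear functional that is $1$ on $v_1^{(k)}$ and $0$ on the remaining $v_i^{(k)}$. The only cosmetic difference is that you package the pairing as a bilinear form $B$ and invoke a dual basis, whereas the paper works directly with the map $u\mapsto\lambda_u$ and an arbitrary functional separating the $v_i^{(k)}$; the content is identical.
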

\begin{proof}
Fix a basis $\{e_1,\dots, e_m\}$ on $V$ and set $e:=e_1\wedge \cdots \wedge e_m$.  For any $u\in \wedge^{m-k} V$ and $v\in \wedge^k V$, define $\lambda_u(v)\in \mathbb{R}$ by
\begin{equation}
\nonumber
u\wedge v=\lambda_u(v)e.
\end{equation}
It's easy to see that $\lambda_u\in (\wedge^k V)^\ast$ and that 
\begin{equation}
\nonumber
u\mapsto \lambda_u,~\hspace*{0.1in} u\in \mbox{$\wedge^{m-k}V$}
\end{equation}
defines a linear map from $\wedge^{m-k}V$ to $(\wedge^k V)^\ast$.  We will now show that the aforementioned map is an isomorphism.  To do this, let
\begin{equation}
e^{(k)}_I:=e_{i_1}\wedge \cdots \wedge e_{i_k}
\end{equation}
where $I=\{i_1,\dots, i_k\}$ and $1\le i_1<\cdots <i_k\le m$.  Then $\{e^{(k)}_I\}_I$ is a basis on $\wedge^kV$.  Similarly, $\{e^{(m-k)}_J\}_J$ is a basis on $\wedge^{m-k} V$.  Then 
\begin{equation}
\nonumber
\lambda_{e^{(m-k)}_J}(e^{(k)}_I)=\pm 1\hspace*{0.1in} \mbox{iff $I\cap J=\emptyset$}
\end{equation}
and zero otherwise. Hence, up to a sign, the set $\{\lambda_{e^{(m-k)}_J}\}_J$ is the dual basis of $\{e^{(k)}_I\}_I$.  This establishes the isomorphism.

To prove the proposition, note that since $\{v_i^{(k)}\}_{i=1}^t$ is linearly independent, there exists an element $\varphi\in (\wedge^k V)^\ast$ such that $\varphi(v_1^{(k)})=1$ and $\varphi(v_i^{(k)})=0$ for $i>1$.  Consequently, there exists some $u\in \wedge^{m-k}V$ such that $\lambda_u=\varphi$.  From this, we have
\begin{equation}
\nonumber
u\wedge v_1^{(k)}=\lambda_u(v_1^{(k)}) e = \varphi(v_1^{(k)})e=e\neq 0
\end{equation}
and 
\begin{equation}
\nonumber
u\wedge v_i^{(k)}=\lambda_u(v_i^{(k)}) e = \varphi(v_i^{(k)})e=0,\hspace*{0.1in}  i>1
\end{equation}
This completes the proof.
\end{proof}
\begin{proposition}
\label{VectorCovectorProp}
$(\wedge^kV)^\ast$ and $\wedge^k V^\ast$ are naturally isomorphic.
\end{proposition}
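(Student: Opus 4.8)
The plan is to exhibit an explicit, basis-independent isomorphism $\Phi\colon \wedge^k V^\ast \to (\wedge^k V)^\ast$ coming from the determinant pairing, and then to verify it is bijective using Theorem \ref{UniversalPropExterior}. First I would fix covectors $\varphi_1,\dots,\varphi_k\in V^\ast$ and consider the map $V^{\times k}\to \mathbb{R}$, $(v_1,\dots,v_k)\mapsto \det\big(\varphi_i(v_j)\big)$. It is multilinear, and it is alternating since a determinant with two equal columns vanishes; hence Theorem \ref{UniversalPropExterior} produces a unique linear functional on $\wedge^k V$, which I will call $\Phi(\varphi_1,\dots,\varphi_k)$, characterized by
\begin{equation}
\nonumber
\Phi(\varphi_1,\dots,\varphi_k)(v_1\wedge\cdots\wedge v_k)=\det\big(\varphi_i(v_j)\big).
\end{equation}

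Next I would observe that $(\varphi_1,\dots,\varphi_k)\mapsto \Phi(\varphi_1,\dots,\varphi_k)$ is itself a multilinear, alternating map $(V^\ast)^{\times k}\to (\wedge^k V)^\ast$ — interchanging two of the $\varphi_i$ interchanges two rows of every determinant above — so a second application of Theorem \ref{UniversalPropExterior} descends it to a well-defined linear map $\Phi\colon \wedge^k V^\ast\to (\wedge^k V)^\ast$ satisfying $\Phi(\varphi_1\wedge\cdots\wedge\varphi_k)(v_1\wedge\cdots\wedge v_k)=\det(\varphi_i(v_j))$ on decomposables.

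To prove $\Phi$ is an isomorphism, I would use that $\dim \wedge^k V^\ast = \dim (\wedge^k V)^\ast = \binom{m}{k}$, so it is enough to check that $\Phi$ sends a basis to a basis. Fix a basis $\{e_1,\dots,e_m\}$ of $V$ with dual basis $\{\xi^1,\dots,\xi^m\}$ of $V^\ast$. For increasing multi-indices $I=\{i_1<\cdots<i_k\}$ write $e_I:=e_{i_1}\wedge\cdots\wedge e_{i_k}$ and $\xi^I:=\xi^{i_1}\wedge\cdots\wedge\xi^{i_k}$; these give bases of $\wedge^k V$ and $\wedge^k V^\ast$ (the fact that such decomposables form a basis is the one already used in the proof of Proposition \ref{Multilinear3}). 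The determinant formula then gives $\Phi(\xi^I)(e_J)=\det\big(\xi^{i_a}(e_{j_b})\big)=\delta_{IJ}$, since whenever $I\neq J$ some row of the matrix vanishes. Thus $\{\Phi(\xi^I)\}_I$ is exactly the basis of $(\wedge^k V)^\ast$ dual to $\{e_J\}_J$, and $\Phi$ is a linear isomorphism.

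Finally, the word \emph{natural} is justified because the construction never referenced a basis: for any linear map $T\colon V\to W$ one reads off directly from the determinant formula that $\Phi_V\circ \wedge^k T^\ast = (\wedge^k T)^\ast\circ \Phi_W$, so $\Phi$ is a natural transformation between the contravariant functors $V\mapsto \wedge^k V^\ast$ and $V\mapsto (\wedge^k V)^\ast$. I expect the only delicate point to be the two-step appeal to Theorem \ref{UniversalPropExterior}: one must confirm the alternating property separately in the $v$-slots and in the $\varphi$-slots, which ultimately rests on the identity $\det\big(\varphi_i(v_{\sigma(j)})\big)=\operatorname{sgn}(\sigma)\det\big(\varphi_i(v_j)\big)$ and its row analogue. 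Everything else is routine bookkeeping.
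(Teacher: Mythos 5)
Your proposal is correct and takes essentially the same approach as the paper: both define the map on decomposables via the determinant pairing $\det(\varphi^i(v_j))$, use the alternating multilinearity in each set of slots to descend to a well-defined linear map $\wedge^k V^\ast\to(\wedge^k V)^\ast$, and conclude bijectivity by checking that the images of the $\xi^I$ form the dual basis of the $e_J$. Your explicit two-step appeal to Theorem \ref{UniversalPropExterior} and the naturality verification are just slightly more detailed versions of what the paper leaves implicit.
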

\begin{proof}
Let $\omega:=\omega^1\wedge \cdots \wedge \omega^k$ and $v:=v_1\wedge\cdots \wedge v_k$ be decomposable $k$-vectors in $\wedge^k V^\ast$ and $\wedge^k V$ respectively.  Define
\begin{equation}
\label{VectorCovector}
\varphi_\omega(v):=\det(\omega^i(v_j)).
\end{equation}
The fact that the determinant is an alternating multilinear map with respect to the $v_j$'s (and the $\omega^i$'s) implies that $\varphi_\omega\in (\wedge^kV)^\ast$, and that (\ref{VectorCovector}) extends to a linear map
\begin{equation}
\nonumber
\varphi: \mbox{$\wedge^k V^\ast$}\rightarrow \mbox{$(\wedge^kV)^\ast$},\hspace*{0.1in} \omega\mapsto \varphi_\omega.
\end{equation}
 To see that this is an isomorphism, let $e_1,\dots, e_m$ be a basis on $V$ and $\phi^1,\dots, \phi^m$ the dual basis.  Let 
\begin{equation}
\mc{I}_k:=\{(i_1,\dots, i_k)~|~1\le i_1<\cdots <i_k\le m\},
\end{equation}
and for $I=(i_1,\dots, i_k)\in\mc{I}_k$, let $e_I:=e_{i_1}\wedge \cdots \wedge e_{i_k}$ and $\phi^I:=\phi^{i_1}\wedge \cdots \wedge \phi^{i_k}$. Then $\{e_I\}$ and $\{\phi^I\}$ are bases on $\wedge^k V$ and $\wedge^k V^\ast$ respectively.  It follows easily from (\ref{VectorCovector}) that $\{\varphi_{\phi^I}\}$ is the dual basis of  $\{e_I\}$.  Hence, $\varphi$ is an isomorphism.
\end{proof}

\subsection{Multivector Fields}
\noindent Let $M$ be a smooth manifold.  To simplify notation, we set $A^k(M):=\Gamma(\wedge^k TM)$ and $\mc{A}(M):=\bigoplus_{k=0}^\infty A^k(M)$, where for a vector bundle $E\rightarrow M$, $\Gamma(E)$ denotes the space of sections of $E$.  The space of $k$-forms on $M$ is denoted as $\Omega^k(M)$.  For convenience, we also set $A^k(M)=0$ and $\Omega^k(M)=0$ for $k<0$.  
\begin{definition}
Let $k\in \mathbb{N}$.  A \textit{multiderivation of degree $k$} (or $k$-\textit{derivation}) on a manifold $M$ is a $k$-linear map
\begin{equation}
\nonumber
\varphi: C^{\infty}(M)\times \cdots \times C^\infty(M)\rightarrow C^\infty(M)
\end{equation}
over $\mathbb{R}$, which is totally antisymmeric and a derivation of $C^\infty(M)$ in each of its arguments, i.e.
\begin{itemize}
\item[(i)] $\varphi(f_{\sigma(1)},\dots,f_{\sigma(k)})=\mbox{sgn}(\sigma)\varphi(f_1,\dots, f_k)$ $\forall~\sigma\in \mathfrak{S}_k$
\item[(ii)] $\varphi(f_1g,f_2,\dots, f_k)=\varphi(f_1,\dots,f_k)g+f_1\varphi(g,f_2,\dots, f_k)$
\end{itemize}
for all $f_i,~g\in C^\infty(M)$.
\end{definition}

\noindent The next two results are well known in differential geometry and we state them without proof.
\begin{proposition}
Every $k$-vector field $X\in A^k(M)$ defines a $k$-derivation via
\begin{equation}
\label{kVectorField1}
X(f_1,\dots,f_k):=(df_1\wedge\cdots\wedge df_k)(X).
\end{equation}
\end{proposition}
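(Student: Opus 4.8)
The plan is to verify directly that the map defined by \eqref{kVectorField1} satisfies properties (i) and (ii) of a $k$-derivation, where the actual content is carried by the pairing $(df_1 \wedge \cdots \wedge df_k)(X)$ coming from Proposition \ref{VectorCovectorProp}. First I would note that for fixed $X \in A^k(M)$, the assignment $(f_1,\dots,f_k) \mapsto (df_1 \wedge \cdots \wedge df_k)(X)$ is $\mathbb{R}$-multilinear and totally antisymmetric in the $f_i$: multilinearity is clear since $f \mapsto df$ is $\mathbb{R}$-linear and the wedge product is multilinear, while antisymmetry follows because swapping $f_i$ and $f_j$ swaps $df_i$ and $df_j$ in the wedge product, which flips sign. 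This gives property (i) immediately. Alternatively, one can invoke \eqref{VectorCovector}: locally $X$ is a combination of $e_I$'s and the pairing reduces to determinants $\det((df_i)(e_{j_\ell}))$, whose antisymmetry in the rows $df_i$ is the antisymmetry of the determinant.

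For property (ii), the Leibniz rule, the key step is the identity $d(f_1 g) = g\, df_1 + f_1\, dg$ in $\Omega^1(M)$. Substituting this into the first slot of the wedge and expanding by multilinearity of $\wedge$ gives
\begin{equation}
\nonumber
d(f_1 g) \wedge df_2 \wedge \cdots \wedge df_k = g\,(df_1 \wedge df_2 \wedge \cdots \wedge df_k) + f_1\,(dg \wedge df_2 \wedge \cdots \wedge df_k).
\end{equation}
Pairing both sides with $X$ via the isomorphism of Proposition \ref{VectorCovectorProp} and using that this pairing is $C^\infty(M)$-linear in the form argument at each point (it is $\mathbb{R}$-bilinear pointwise, and the scalars $g(p)$, $f_1(p)$ are just real numbers at each $p \in M$) yields exactly
\begin{equation}
\nonumber
X(f_1 g, f_2, \dots, f_k) = g\, X(f_1,\dots,f_k) + f_1\, X(g, f_2,\dots,f_k),
\end{equation}
which is (ii). I would also remark that the output $X(f_1,\dots,f_k)$ is genuinely a smooth function: in local coordinates the components of $X$ and the partial derivatives $\partial f_i/\partial x^j$ are all smooth, so the determinant expression is smooth.

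I do not expect any serious obstacle here; the statement is essentially bookkeeping built on the natural pairing $\wedge^k T^*M \cong (\wedge^k TM)^*$ established in Proposition \ref{VectorCovectorProp} together with the Leibniz rule for the de Rham differential on functions. The one point requiring a little care is the interplay between the pointwise multilinear-algebra pairing and the global smooth structure — specifically, that \eqref{kVectorField1} is well-defined as a map into $C^\infty(M)$ rather than merely into functions, and that the Leibniz manipulation, which a priori lives in $\Omega^1(M)$, pushes through the pairing correctly because multiplication by $f_1, g \in C^\infty(M)$ commutes with the $C^\infty(M)$-linear contraction. Since this is a standard fact stated without proof in the paper, a short paragraph invoking Proposition \ref{VectorCovectorProp}, $\mathbb{R}$-multilinearity, antisymmetry of the wedge, and $d(f_1 g) = g\,df_1 + f_1\,dg$ suffices.
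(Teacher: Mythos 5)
Your proof is correct and complete: direct verification of $\mathbb{R}$-multilinearity and antisymmetry from the corresponding properties of $d$ and the wedge product, plus the Leibniz rule via $d(f_1g)=g\,df_1+f_1\,dg$ pushed through the $C^\infty(M)$-linear pairing of Proposition \ref{VectorCovectorProp}, is exactly the standard argument. The paper states this proposition without proof as a well-known fact, so there is no alternative approach to compare against; your write-up would serve as a perfectly adequate proof if one were to be included.
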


\begin{proposition}
There is a one-one correspondence between the space of $k$-derivations and the space of $k$-vector fields.  Specifically, every $k$-derivation $\varphi$ is given by
\begin{equation}
\nonumber
\varphi(f_1,\dots, f_k)=X(f_1,\dots, f_k),\hspace*{0.1in} f_i\in C^\infty(M),~i=1,\dots, k,
\end{equation}
for some unique $X\in A^k(M)$.  
\end{proposition}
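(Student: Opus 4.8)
The plan is to realize the correspondence as the inverse of the assignment $X \mapsto \varphi_X$ of the preceding proposition, where $\varphi_X(f_1,\dots,f_k) := (df_1\wedge\cdots\wedge df_k)(X)$ (the pairing $\wedge^k T^*M$ with $\wedge^k TM$ being the one from Proposition \ref{VectorCovectorProp}). Injectivity of $X \mapsto \varphi_X$ is immediate: if $\varphi_X = \varphi_{X'}$, then $(df_1\wedge\cdots\wedge df_k)(X-X') = 0$ for all $f_i\in C^\infty(M)$; taking the $f_i$ to be local coordinate functions extended by a bump function, the covectors $dx^j|_p$ form a basis of $T^*_pM$, so the decomposable $k$-covectors $dx^{j_1}|_p\wedge\cdots\wedge dx^{j_k}|_p$ span $\wedge^k T^*_pM \cong (\wedge^k T_pM)^*$, forcing $X_p = X'_p$. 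So the content is surjectivity: given a $k$-derivation $\varphi$, construct a (necessarily unique, smooth) $k$-vector field $X$ with $\varphi = \varphi_X$.

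First I would record two elementary consequences of axiom (ii). Applying (ii) with $f_1 = g = 1$ gives $\varphi(1,f_2,\dots,f_k) = 2\,\varphi(1,f_2,\dots,f_k)$, hence $\varphi$ vanishes whenever an argument is a constant. Second, $\varphi$ is \emph{local}: $\varphi(f_1,\dots,f_k)(p)$ is unchanged if some argument is altered away from $p$. Indeed, if $g$ vanishes on an open $U\ni p$, pick $\chi\in C^\infty(M)$ with $\chi(p)=1$ and $\mathrm{supp}\,\chi\subset U$, so $(1-\chi)g = g$ identically; then (ii) gives $\varphi(g,f_2,\dots,f_k) = \varphi(1-\chi,f_2,\dots,f_k)\,g + (1-\chi)\,\varphi(g,f_2,\dots,f_k)$, which vanishes at $p$ since $g(p) = 0 = (1-\chi)(p)$, and axiom (i) propagates this to every slot.

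Next I would upgrade locality to pointwise dependence on differentials: if $df_1|_p = 0$ then $\varphi(f_1,\dots,f_k)(p) = 0$. By locality I may work in a chart $(U;x^1,\dots,x^m)$ centered at $p$ and, subtracting the constant $f_1(p)$ (which $\varphi$ kills), assume $f_1(p) = 0$; Hadamard's lemma then writes $f_1 = \sum_i b_i\,x^i$ near $p$ with $b_i$ smooth and $b_i(p) = \partial f_1/\partial x^i|_p = 0$. Replacing $x^i, b_i, f_1$ by global functions agreeing with them near $p$ (legitimate by locality), axiom (ii) yields $\varphi(f_1,\dots,f_k)(p) = \sum_i\big(\varphi(x^i,f_2,\dots,f_k)(p)\,b_i(p) + x^i(p)\,\varphi(b_i,f_2,\dots,f_k)(p)\big) = 0$. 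Together with multilinearity, antisymmetry, and the fact that every covector at $p$ is $df|_p$ for some $f$, this shows $(\xi_1,\dots,\xi_k)\mapsto \varphi(f_1,\dots,f_k)(p)$ (with $df_i|_p = \xi_i$) is a well-defined alternating $k$-linear form on $T^*_pM$; by Theorem \ref{UniversalPropExterior} it factors through a functional on $\wedge^k T^*_pM$, which by Proposition \ref{VectorCovectorProp} is an element $X_p\in(\wedge^k T^*_pM)^*\cong\wedge^k T_pM$, characterized by $(df_1\wedge\cdots\wedge df_k)(X_p) = \varphi(f_1,\dots,f_k)(p)$.

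It remains to check $p\mapsto X_p$ is smooth, after which $X\in A^k(M)$ and $\varphi_X = \varphi$ by construction. In a chart, the components of $X$ in the frame $\{\partial_{i_1}\wedge\cdots\wedge\partial_{i_k}\}_{i_1<\cdots<i_k}$ are $c^{i_1\cdots i_k}(p) = (dx^{i_1}\wedge\cdots\wedge dx^{i_k})(X_p) = \varphi(\tilde x^{i_1},\dots,\tilde x^{i_k})(p)$ for any global $\tilde x^i$ agreeing with $x^i$ near $p$; fixing a bump function $\chi$ supported in the chart and equal to $1$ near a prescribed point, locality shows $c^{i_1\cdots i_k}$ agrees near that point with $\varphi(\chi x^{i_1},\dots,\chi x^{i_k})\in C^\infty(M)$, hence is smooth. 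I expect the pointwise-reduction step — passing from a local operator to one depending only on the differentials at the point, with its bump-function bookkeeping — to be the only real obstacle; everything after it is linear algebra already supplied by Theorem \ref{UniversalPropExterior} and Proposition \ref{VectorCovectorProp}.
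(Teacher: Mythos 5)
The paper states this proposition without proof (``The next two results are well known in differential geometry and we state them without proof''), so there is no in-paper argument to compare against. Your proposal is correct and is the standard argument: injectivity of $X\mapsto\varphi_X$ from the fact that coordinate differentials generate $\wedge^k T^*_pM$; then, for surjectivity, the chain constants $\to$ locality $\to$ pointwise dependence on differentials (via Hadamard's lemma), followed by the linear-algebra identification through Theorem \ref{UniversalPropExterior} and Proposition \ref{VectorCovectorProp}, and finally smoothness of the components via the bump-function trick. All the steps you flag as delicate (the $(1-\chi)g=g$ locality argument, replacing the Hadamard data by global functions before invoking axiom (ii), and reading off the components $\varphi(\chi x^{i_1},\dots,\chi x^{i_k})$ to get smoothness) are handled correctly, and the implicit uses of $\mathbb{R}$-linearity (to kill constants and to pass from the single-product Leibniz rule to the finite sum $\sum_i b_i x^i$) are legitimate given the definition of a $k$-derivation. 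I see no gap.
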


\begin{definition}
\label{SchoutenBracketMVF}
The \textit{Schouten-Nijenhuis bracket} of multivector fields is the unique $\mathbb{R}$-bilinear map\footnote{see Proposition 3.1 of \cite{Mar}}
\begin{equation}
\nonumber
[\cdot,\cdot]: A^k(M)\times A^l(M)\rightarrow A^{k+l-1}(M)
\end{equation}
which satisfies the following conditions:
\begin{itemize}
\item[(i)] For $f,g\in C^\infty(M)$, $[f,g]=0$
\item[(ii)] For $X\in A^1(M)$, $Q\in \mc{A}(M)$, $[X,Q]=L_X Q$, the Lie derivative of $Q$ with respect to $X$
\item[(iii)] For $P\in A^p(M)$, $Q\in A^q(M)$, $[P,Q]=-(-1)^{(p-1)(q-1)}[Q,P]$
\item[(iv)]  $ad_P:=[P,\cdot]$ is a derivation of degree $p-1$ for $P\in A^p(M)$ of the exterior product on $\mc{A}(M)$, that is,
\begin{equation}
\nonumber
ad_P(Q\wedge R)=ad_P(Q)\wedge R+(-1)^{(p-1)q}Q\wedge ad_P(R)
\end{equation}
for $Q\in A^q(M)$, $R\in \mc{A}(M)$.
\end{itemize}
\end{definition}
\noindent Definition \ref{SchoutenBracketMVF} implies that \cite{Mar}
\begin{align}
\nonumber
(-1)^{(p-1)(r-1)}&[P,[Q,R]]+(-1)^{(q-1)(p-1)}[Q,[R,P]]\\
\nonumber
&+(-1)^{(r-1)(q-1)}[R,[P,Q]]=0
\end{align}
for $P\in A^p(M)$, $Q\in A^q(M)$, and $R\in A^r(M)$.  This together with (iii) of Definition \ref{SchoutenBracketMVF} shows that $(\mc{A}(M),[\cdot,\cdot])$ is a graded Lie algebra if $\deg ~A^p(M):=p-1$.

For $X_1,X_2\dots, X_p,Y_1,\dots, Y_q\in A^1(M)$, the Schouten-Nijenhuis bracket is given explicitly by
\begin{align}
\nonumber
[X_1\wedge &\dots \wedge X_p,Y_1\wedge\cdots \wedge Y_q]\\
\nonumber
&= \sum_{i,j}(-1)^{i+j}[X_i,Y_j]\wedge X_1\wedge \cdots \wedge \hat{X}_i\wedge\cdots\wedge X_p\wedge Y_1\wedge \cdots\wedge \hat{Y}_j\wedge\cdots \wedge Y_q.
\end{align}

\begin{definition}
\label{InteriorProductDef}
The interior product of a smooth function $f$ with a $k$-derivation $X\in A^k(M)$ ($k\ge 1$) is the $k-1$-derviation $i_fX$ defined by 
\begin{equation}
\nonumber
i_{f}X(g_1,\dots, g_{k-1}):=X(f,g_1,\dots, g_{k-1}),
\end{equation}
for $g_1,\dots, g_{k-1}\in C^\infty(M)$.  For $g\in A^0(M):=C^\infty(M)$, $i_fg:=0$.
\end{definition}

\begin{proposition}
\label{InteriorProductProp}
Let $f,g\in C^\infty(M)$, $X\in A^k(M)$, and $Y,Y'\in \mc{A}(M)$.  Then
\begin{itemize}
\item[(i)] $i_{f+g} X=i_fX+i_gX$
\item[(ii)] $i_f(Y+Y')=i_f Y+i_fY'$
\item[(iii)] $i_{fg} X=g i_f X+ fi_g X$
\item[(iv)] $i_f(X\wedge Y)=(i_f X)\wedge Y+(-1)^kX\wedge (i_f Y)$
\item[(v)] $[X,f]=(-1)^{k-1}i_fX$ 
\item[(vi)] $[fX,Y]=f[X,Y]-X\wedge i_f Y$
\end{itemize}
\end{proposition}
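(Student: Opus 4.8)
The plan is to prove (i)--(iii) straight from Definition \ref{InteriorProductDef}, to establish (iv) by reducing to decomposable multivector fields, to obtain (v) by induction on degree (using (iv) together with the axioms for the Schouten--Nijenhuis bracket), and finally to derive (vi) from those bracket axioms and (v). For (i) and (ii) one evaluates both sides on an arbitrary tuple $(g_1,\dots,g_{k-1})$: by Definition \ref{InteriorProductDef} each identity then reduces to an obvious linearity of the $k$-derivation associated to $X$, respectively to additivity of the correspondence between $k$-vector fields and $k$-derivations. For (iii), evaluating $i_{fg}X$ on $(g_1,\dots,g_{k-1})$ gives $X(fg,g_1,\dots,g_{k-1})$, and property (ii) in the definition of a multiderivation (the Leibniz rule in the first slot) rewrites this as $g\,X(f,g_1,\dots,g_{k-1})+f\,X(g,g_1,\dots,g_{k-1})=(g\,i_fX+f\,i_gX)(g_1,\dots,g_{k-1})$.

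For (iv) I would first observe that both sides are $\mathbb{R}$-bilinear in $(X,Y)$ (using (ii) for the left factor) and are local operators, so it suffices to verify the identity when $X=X_1\wedge\cdots\wedge X_k$ and $Y=Y_1\wedge\cdots\wedge Y_l$ are decomposable with $X_i,Y_j\in A^1(M)$, since locally every multivector field is a $C^\infty(M)$-linear combination of such. Expanding the determinant in \eqref{kVectorField1} along its first row (and invoking Proposition \ref{VectorCovectorProp}) gives the explicit formula
\begin{equation}
\nonumber
i_f(X_1\wedge\cdots\wedge X_k)=\sum_{j=1}^{k}(-1)^{j-1}(X_jf)\,X_1\wedge\cdots\wedge\widehat{X}_j\wedge\cdots\wedge X_k .
\end{equation}
Applying this to $X\wedge Y=X_1\wedge\cdots\wedge X_k\wedge Y_1\wedge\cdots\wedge Y_l$ and splitting the resulting sum into the terms that delete some $X_j$ and those that delete some $Y_j$ produces $(i_fX)\wedge Y$ from the first group and, after commuting the sign $(-1)^k$ past the block $X_1\wedge\cdots\wedge X_k$, the term $(-1)^kX\wedge(i_fY)$ from the second; this is a short sign bookkeeping.

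Part (v) I would prove by induction on $k$. For $k=0$ both sides vanish (by Definition \ref{InteriorProductDef} and condition (i) of Definition \ref{SchoutenBracketMVF}); for $k=1$, condition (ii) of Definition \ref{SchoutenBracketMVF} gives $[X,f]=L_Xf=Xf$, while $i_fX=X(f)=Xf$. For the inductive step I reduce (as in (iv), by bilinearity and locality) to $X=X_1\wedge X'$ with $X_1\in A^1(M)$ and $X'\in A^{k-1}(M)$, expand $[X_1\wedge X',f]$ using condition (iii) of Definition \ref{SchoutenBracketMVF} to flip the bracket and condition (iv) (which makes $ad_f$ a degree $-1$ derivation of $\wedge$) to distribute it, and then rewrite the pieces via the case $k=1$, the inductive hypothesis, and part (iv) of the present proposition; matching signs yields the claim.

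Finally, (vi) follows by flipping $[fX,Y]$ with condition (iii) of Definition \ref{SchoutenBracketMVF}, using that $ad_Y$ is a derivation of degree $l-1$ (condition (iv)) to expand $[Y,fX]=[Y,f]\wedge X+f[Y,X]$, substituting $[Y,f]=(-1)^{l-1}i_fY$ from (v), and simplifying. Throughout, the only genuinely delicate point is the sign accounting in (iv) and in the inductive steps for (v) and (vi) --- in particular the sign incurred when commuting $i_fY$ (a multivector field of degree $l-1$) past $X$ (of degree $k$); the structural input (reduction to decomposables, the determinant formula, and the graded Leibniz axioms for the bracket) is routine.
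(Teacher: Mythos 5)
Your proposal is correct, and for parts (i)--(iii), (v), and (vi) it follows essentially the same path as the paper: (i)--(iii) directly from the definitions, (v) by induction on the degree using the graded antisymmetry and Leibniz axioms of the Schouten--Nijenhuis bracket together with part (iv), and (vi) by flipping the bracket, expanding $ad_Y$ as a derivation, and substituting (v); your induction for (v) splits off the first wedge factor where the paper appends a $(k+1)$-st, an immaterial variation. The genuine difference is in (iv): the paper proves it for arbitrary, not necessarily decomposable, $X$ and $Y$ by expanding $(X\wedge Y)(g_1,\dots,g_{k+l})$ over shuffle permutations and splitting the sum according to whether $1$ lands in the $X$-block or the $Y$-block, whereas you reduce to decomposables and expand the determinant in (\ref{kVectorField1}) along its first row to get the explicit formula $i_f(X_1\wedge\cdots\wedge X_k)=\sum_{j}(-1)^{j-1}(X_jf)\,X_1\wedge\cdots\wedge\widehat{X}_j\wedge\cdots\wedge X_k$ (which is precisely Lemma \ref{SchoutenLemma} of the paper combined with part (v)); your sign bookkeeping from there is right. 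Your route buys a concrete closed formula for $i_f$ on decomposables that is reused later in the paper anyway, while the paper's shuffle argument avoids the reduction step entirely. One small repair: to legitimize the reduction to decomposables you should appeal to $C^\infty(M)$-bilinearity of both sides of (iv) --- which does hold, since $i_f$ is tensorial in the multivector argument and $\wedge$ is $C^\infty(M)$-bilinear --- rather than $\mathbb{R}$-bilinearity plus locality alone, because a general multivector field is locally a $C^\infty(M)$-combination (not an $\mathbb{R}$-combination) of decomposable ones.
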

\begin{proof}
(i) and (ii) are immediate.

Note that (iii)-(v) is satisfied for the case when $X$ is a smooth function.  We now prove (iii)-(v) for the case when $X\in A^k(M)$ with $k\ge 1$.  

(iii) is a direct consequence of the fact that $X$ is a derivation of $C^\infty(M)$ in each of its arguments.  Specifically,
\begin{align}
\nonumber
i_{fg}X&:=X(fg,\cdot,\dots, \cdot)\\
\nonumber
&=X(f,\cdot, \dots, \cdot)g+fX(g,\cdot,\dots,\cdot)\\
\nonumber
&=gi_f X+fi_g X.
\end{align}

For (iv), take $Y\in A^{l}(M)$ without loss of generality and set $g_1:=f$.  Then
\begin{align}
\nonumber
i_f (X\wedge Y)(g_2,\dots, g_{k+l})&=(X\wedge Y)(g_1,g_2,\dots,g_{k+l})\\
\label{WedgeSum1}
&=\sum_{\sigma\in S(k,l)} \epsilon(\sigma)X(g_{\sigma(1)},\dots, g_{\sigma(k)})Y(g_{\sigma(k+1)},\dots,g_{\sigma(k+l)}), 
\end{align}
where $\sigma\in S(k,l)\subset \mathfrak{S}_{k+l}$ is a shuffle permutation, i.e., $\sigma$ satisfies
\begin{equation}
\nonumber
\sigma(1)<\cdots <\sigma(k),\hspace*{0.2in} \sigma(k+1)<\cdots <\sigma(k+l)
\end{equation}
and $\epsilon(\sigma)=+1$ $(-1)$ if $\sigma$ is even (odd).   (\ref{WedgeSum1}) can be decomposed as
\begin{align}
\nonumber
&\sum_{\sigma\in S(k,l),~\sigma(1)=1} \epsilon(\sigma)X(g_{\sigma(1)},\dots, g_{\sigma(k)})Y(g_{\sigma(k+1)},\dots,g_{\sigma(k+l)})\\
\nonumber
&+\sum_{\sigma\in S(k,l),~\sigma(k+1)=1} \epsilon(\sigma)X(g_{\sigma(1)},\dots, g_{\sigma(k)})Y(g_{\sigma(k+1)},\dots,g_{\sigma(k+l)})\\
\nonumber
&=\sum_{\sigma\in S(k-1,l)} \epsilon(\sigma)i_fX(\tilde{g}_{\sigma(1)},\dots, \tilde{g}_{\sigma(k-1)})Y(\tilde{g}_{\sigma(k)},\dots, \tilde{g}_{\sigma(l+k-1)})\\
\label{WedgeSum2}
&+(-1)^{kl}\sum_{\sigma\in S(l-1,k)} \epsilon(\sigma)i_fY(\tilde{g}_{\sigma(1)},\dots, \tilde{g}_{\sigma(l-1)})X(\tilde{g}_{\sigma(l)},\dots, \tilde{g}_{\sigma(k+l-1)}),
\end{align}
where $\tilde{g}_i=g_{i+1}$ for $i=1,\dots,k+l-1$.  (\ref{WedgeSum2}) can then be rewritten as
\begin{align}
\nonumber
&(i_f X\wedge Y)(g_2,\dots, g_{k+l})+(-1)^{kl}(i_f Y\wedge X)(g_2,\dots, g_{k+l})\\
\nonumber
&=(i_f X\wedge Y)(g_2,\dots, g_{k+l})+(-1)^{kl+k(l-1)}(X\wedge i_f Y)(g_2,\dots, g_{k+l})\\
\nonumber
&=(i_f X\wedge Y)(g_2,\dots, g_{k+l})+(-1)^{k}(X\wedge i_f Y)(g_2,\dots, g_{k+l})
\end{align}

For (v), let $X\in A^k(M)$. For $k=0$, the result follows from Definition \ref{SchoutenBracketMVF} and Definition \ref{InteriorProductDef}.  Now consider the case when $k\ge 1$.  Condition (iv) of  Definition \ref{SchoutenBracketMVF} implies that the Schouten-Nijenhuis bracket is local in nature.  Since $X$ is locally a finite sum of decomposable terms, it sufficies to  prove (v) of Proposition \ref{InteriorProductProp} for the case when $X$ is decomposable, i.e. $X=X_1\wedge \cdots \wedge X_k$ where $X_i\in A^1(M)$ for $i=1,\dots, k$.  For $k=1$, $X\in A^1(M)$ and 
\begin{equation}
\nonumber
[X,f] = L_X f=Xf=i_fX=(-1)^{k-1}i_fX,
\end{equation}
by (ii) of Definition \ref{SchoutenBracketMVF}.  We now prove (v) by induction on $k$.  Suppose then that (v) holds for $k$ (where $k\ge 1$) and let $X_{k+1}\in A^1(M)$.  By  (iii) and (iv) of Definition \ref{SchoutenBracketMVF}, we have
\begin{align}
\nonumber
[X\wedge X_{k+1},f]&=(-1)^{k+1}[f,X\wedge X_{k+1}]\\
\nonumber
&=(-1)^{k+1}([f,X]\wedge X_{k+1}+(-1)^{k} X\wedge [f,X_{k+1}])\\
\nonumber
&=(-1)^{k+1}((-1)^{k}[X,f]\wedge X_{k+1}+(-1)^{k+1}X\wedge [X_{k+1},f])\\
\nonumber
&=(-1)^{k+1}(-i_f X\wedge X_{k+1}+(-1)^{k+1} X\wedge i_f X_{k+1})\\
\nonumber
&=(-1)^k(i_f X\wedge X_{k+1}+(-1)^k X\wedge i_f X_{k+1})\\
\nonumber
&=(-1)^ki_f (X\wedge X_{k+1}),
\end{align}
where we used the induction hypothesis in the fourth equality and (iv) of Proposition \ref{InteriorProductProp} in the sixth equality.  

For (vi), take $Y\in A^l(M)$ without loss of generality.  Then
\begin{align}
\nonumber
[fX,Y]&=[f\wedge X,Y]\\
\nonumber
&=-(-1)^{(k-1)(l-1)}[Y,f\wedge X]\\
\nonumber
&=-(-1)^{(k-1)(l-1)}([Y,f]\wedge X+f\wedge[Y,X])\\
\nonumber
&=-(-1)^{(k-1)(l-1)}((-1)^{(l-1)}i_fY\wedge X+f[Y,X])\\
\nonumber
&=f[X,Y]-(-1)^{(l-1)k} i_fY \wedge X\\
\nonumber
&=f[X,Y]-X\wedge i_fY,
\end{align}
where we used (iii) and (iv) of Definition \ref{SchoutenBracketMVF} in the second and third equalities respectively, and (v) of Proposition \ref{InteriorProductProp} was used in the fourth equality. This completes the proof. 
\end{proof}
\begin{definition}
Let $X\in A^k(M)$, $k\ge 1$.  The \textit{interior product} by $X$ \cite{Mar} is the $C^\infty(M)$-linear map $i_X: \Omega^l(M)\rightarrow \Omega^{l-k}(M)$ which is defined as follows:
\begin{itemize}
\item[(i)] for $\omega\in \Omega^l(X)$, $l> k$, $i_X\omega$ is given by
\begin{equation}
i_X\omega(Y):=\omega(X\wedge Y),\hspace*{0.2in} \forall ~Y\in A^{l-k}(M),
\end{equation}
\item[(ii)] for $\omega\in \Omega^l(X)$, $l= k$, $i_X\omega$ is given by
\begin{equation}
i_X\omega:=\omega(X),
\end{equation}
\item[(iii)] for $\omega\in \Omega^l(X)$, $l< k$, $i_X\omega:=0$.
\end{itemize}
\vspace*{0.1in}
For $f\in A^0(M):=C^\infty(M)$, $i_f\omega:=f\omega$ $~\forall~\omega\in \Omega^\bullet(M)$.
\end{definition}
\begin{remark}
For $X\in A^1(M)$, one can show that $i_X$ is a derivation of degree $-1$, that is,  
\begin{equation}
\label{InteriorProductDeg1}
i_X(\omega\wedge \eta)=(i_X\omega)\wedge \eta+(-1)^l\omega\wedge i_X\eta,
\end{equation}
for $\omega\in \Omega^l(M)$, $\eta\in \Omega^\bullet(M)$. 
\end{remark}

\begin{proposition}
\label{InteriorProduct1}
Let $X\in A^k(M)$ and $Y\in A^l(M)$.  Then
\begin{itemize}
\item[(i)] $i_{X\wedge Y}= i_Y\circ i_X$ 
\item[(ii)]  $ i_Y\circ i_X=(-1)^{kl}i_X\circ i_Y$
\end{itemize}
\end{proposition}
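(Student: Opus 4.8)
The plan is to establish (i) by a direct computation from the definition of the interior product, and then read off (ii) from (i) together with the graded commutativity of the exterior algebra. The only inputs needed are the defining formula $i_X\omega(Z)=\omega(X\wedge Z)$, the $C^\infty(M)$-linearity of $i_X$ (which is part of the definition of $i_X$), the fact that the subscript of $i_{(\cdot)}$ depends linearly on it (immediate from that formula, for multivector fields of equal degree), and the associativity of the exterior product on $\mathcal A(M)$.

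For (i), I would first dispatch the degenerate cases where $X$ or $Y$ has degree $0$. If $X=f\in C^\infty(M)$, then $i_{f\wedge Y}\omega=i_{fY}\omega=f\,i_Y\omega$, while $i_Y(i_f\omega)=i_Y(f\omega)=f\,i_Y\omega$ by $C^\infty(M)$-linearity of $i_Y$; the case $Y=g\in C^\infty(M)$ is symmetric. So assume $k,l\ge 1$ and fix $\omega\in\Omega^p(M)$. If $p<k+l$, both sides vanish: the left side because $X\wedge Y\in A^{k+l}(M)$ with $k+l>p$, and the right side because either $p<k$, whence $i_X\omega=0$, or $k\le p<k+l$, in which case $i_X\omega\in\Omega^{p-k}(M)$ has degree $p-k<l$ and is therefore annihilated by $i_Y$. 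If instead $p\ge k+l$, then for every $Z\in A^{p-k-l}(M)$,
\[
i_{X\wedge Y}\omega(Z)=\omega((X\wedge Y)\wedge Z)=\omega(X\wedge(Y\wedge Z))=i_X\omega(Y\wedge Z)=i_Y(i_X\omega)(Z),
\]
where the middle equality is associativity of $\wedge$ and the outer ones are the definition of the interior product; when $p=k+l$ the test field $Z$ is a scalar and the same chain holds with the equal-degree clause of the definition replacing the generic one. Since this holds for all $\omega$ and all $Z$, we conclude $i_{X\wedge Y}=i_Y\circ i_X$.

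For (ii), apply (i) twice to get $i_Y\circ i_X=i_{X\wedge Y}$ and $i_X\circ i_Y=i_{Y\wedge X}$. Since $X\in A^k(M)$ and $Y\in A^l(M)$, graded commutativity of the exterior algebra gives $X\wedge Y=(-1)^{kl}\,Y\wedge X$ in $\Gamma(\wedge^\bullet TM)$, and linearity of the subscript then yields $i_{X\wedge Y}=(-1)^{kl}\,i_{Y\wedge X}$. Combining the three identities gives $i_Y\circ i_X=(-1)^{kl}\,i_X\circ i_Y$, which is (ii).

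The argument is entirely formal; the one place that requires genuine attention is making the three cases in the definition of $i_X$ (degree of the subscript less than, equal to, or greater than that of the form) line up correctly — in particular at the boundary $p=k+l$, where a contraction lands in $\Omega^0(M)$, and in the range $k\le p<k+l$, where the intermediate form $i_X\omega$ must be checked to be killed by $i_Y$.
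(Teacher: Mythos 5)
Your proof is correct and follows essentially the same route as the paper: the chain $i_{X\wedge Y}\omega(Z)=\omega(X\wedge Y\wedge Z)=(i_X\omega)(Y\wedge Z)=i_Y(i_X\omega)(Z)$ for (i), and then (ii) from (i) plus graded commutativity of the wedge product. The only difference is that you explicitly treat the degenerate cases (degree-$0$ factors and $p<k+l$), which the paper leaves implicit; this is a welcome bit of extra care but not a change of method.
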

\begin{proof}
Let $\omega\in \Omega^p(M)$ with $p > k+l$ and let $Z\in A^{p-k-l}(M)$.  For (i), we have
\begin{align}
\nonumber
i_{X\wedge Y}\omega(Z)&:=\omega(X\wedge Y\wedge Z)\\
\nonumber
&=(i_X\omega)(Y\wedge Z)\\
\nonumber
&=i_Y(i_X\omega)(Z).
\end{align}
The case when $p=k+l$ is handled similarly.  

For (ii), we have
\begin{align}
\nonumber
i_Y\circ i_X = i_{X\wedge Y}=(-1)^{kl}i_{Y\wedge X}=(-1)^{kl}i_X\circ i_Y
\end{align}
where the first and third equality follows from part (i) of Proposition \ref{InteriorProduct1}.
\end{proof}

We conclude this section by recalling the Lie derivative of a differential form $\omega$ along a  MVF $X\in A^k(M)$ $(k\ge 1)$ \cite{FPR}:
\begin{equation}
\label{LieDerivativeMVF}
L_X\omega:=d i_X\omega - (-1)^k i_X d\omega.
\end{equation}
For $\omega\in \Omega^l(M)$, (\ref{LieDerivativeMVF}) implies that $L_X\omega\in \Omega^{l-k+1}(M)$.  The next result summarizes the properties of the Lie derivative of differential forms along MVFs:
\begin{proposition}\cite{FPR}
\label{LieDerivativeProp}
Let $X\in A^k(M)$, $Y\in A^l(M)$, and $\omega\in \Omega^\bullet(M)$.  Then
\begin{itemize}
\item[(i)] $dL_X\omega = (-1)^{k-1}L_X d\omega$
\item[(ii)] $i_{[X,Y]} \omega=(-1)^{(k-1)l}L_X i_Y\omega-i_Y L_X\omega$
\item[(iii)] $L_{[X,Y]}\omega=(-1)^{(k-1)(l-1)}L_X L_Y\omega-L_YL_X\omega$
\item[(iv)] $L_{X\wedge Y} \omega=(-1)^l i_Y L_X\omega+L_Y i_X\omega$
\end{itemize}
\end{proposition}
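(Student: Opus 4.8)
The plan is to establish the four identities in the order (i), (iv), (ii), (iii): identities (i) and (iv) come directly out of the definition (\ref{LieDerivativeMVF}) together with $d^2=0$ and Proposition \ref{InteriorProduct1}, identity (ii) is proved by induction on the degrees, and identity (iii) is then deduced formally from (i) and (ii).

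For (i): applying $d$ to (\ref{LieDerivativeMVF}) and using $d^2=0$ gives $dL_X\omega=-(-1)^k\,d\,i_X\,d\omega=(-1)^{k-1}\,d\,i_X\,d\omega$, while $(-1)^{k-1}L_X\,d\omega=(-1)^{k-1}\bigl(d\,i_X\,d\omega-(-1)^k i_X\,d^2\omega\bigr)=(-1)^{k-1}\,d\,i_X\,d\omega$; the two agree. For (iv): using $i_{X\wedge Y}=i_Y\circ i_X$ from Proposition \ref{InteriorProduct1}(i), the left side of (iv) expands via (\ref{LieDerivativeMVF}) to $d\,i_Y i_X\omega-(-1)^{k+l}i_Y i_X\,d\omega$; expanding $(-1)^l i_Y L_X\omega$ and $L_Y i_X\omega$ the same way, the two occurrences of $(-1)^l i_Y\,d\,i_X\omega$ cancel and what remains is exactly $d\,i_Y i_X\omega-(-1)^{k+l}i_Y i_X\,d\omega$. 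Hence (iv) holds; neither (i) nor (iv) needs any induction.

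For (ii): since the Schouten--Nijenhuis bracket, the interior product, and the Lie derivative are all local and the asserted identity is additive in $X$ and in $Y$, it suffices to treat $X=X_1\wedge\cdots\wedge X_k$ and $Y=Y_1\wedge\cdots\wedge Y_l$ with all $X_i,Y_j\in A^1(M)$, and then to induct on $k+l$. The base case $k=l=1$ reads $i_{[X,Y]}\omega=L_X i_Y\omega-i_Y L_X\omega$ and is the classical Cartan identity $i_{[X,Y]}=[L_X,i_Y]$ on $\Omega^\bullet(M)$ (for a vector field, (\ref{LieDerivativeMVF}) is Cartan's formula $L_X=d\,i_X+i_X\,d$, and both sides are degree $-1$ derivations of $\Omega^\bullet(M)$ agreeing on $C^\infty(M)$ and on exact $1$-forms, using $[L_X,d]=0$). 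For the inductive step with $k\ge 2$ (the case $k=1$, $l\ge 2$ being handled symmetrically by peeling a factor off $Y$), write $X=X'\wedge Z$ with $X'=X_1\wedge\cdots\wedge X_{k-1}$ and $Z=X_k$. The graded antisymmetry and graded Leibniz rule of the bracket (Definition \ref{SchoutenBracketMVF}(iii),(iv)) expand $[X'\wedge Z,Y]$ as a combination of $[X',Y]\wedge Z$ and $X'\wedge[Z,Y]$; applying $i_{A\wedge B}=i_B\circ i_A$ then rewrites $i_{[X,Y]}\omega$ in terms of $i_Z\,i_{[X',Y]}\omega$ and $i_{[Z,Y]}\,i_{X'}\omega$, to each of which the inductive hypothesis applies since the factor counts $(k-1)+l$ and $1+l$ are both strictly less than $k+l$. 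On the right-hand side of (ii) one expands $L_X=L_{X'\wedge Z}$ by (iv) and matches the two sides term by term, also using the graded commutativity $i_Y\circ i_{X'}=(-1)^{(k-1)l}i_{X'}\circ i_Y$ of Proposition \ref{InteriorProduct1}(ii). I expect the sign bookkeeping in this final matching step to be the only real obstacle; conceptually nothing beyond (iv) and the Leibniz rule for the bracket is needed.

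Finally, (iii) follows from (i) and (ii). Write $L_{[X,Y]}\omega=d\,i_{[X,Y]}\omega-(-1)^{k+l-1}i_{[X,Y]}\,d\omega$ and substitute (ii) for $i_{[X,Y]}\omega$ and for $i_{[X,Y]}\,d\omega$. Re-expanding the resulting $d\,L_X(\cdot)$ and $d\,i_Y(\cdot)$ terms by means of (i) in the form $d\,L_X\alpha=(-1)^{k-1}L_X\,d\alpha$ and the rearrangement $d\,i_Y\alpha=L_Y\alpha+(-1)^l i_Y\,d\alpha$ of (\ref{LieDerivativeMVF}), one finds that all terms containing an interior product of $d\omega$ cancel in pairs, leaving precisely $(-1)^{(k-1)(l-1)}L_X L_Y\omega-L_Y L_X\omega$. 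This yields (iii) and completes the proof of the proposition.
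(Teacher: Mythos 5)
Your argument is correct, but note that the paper does not actually prove this proposition: its ``proof'' consists of the single line ``See Proposition A.3 of \cite{FPR}'', so any self-contained argument is by construction a different route. Your ordering (i), (iv), (ii), (iii) is well chosen. Parts (i) and (iv) are, as you say, immediate from (\ref{LieDerivativeMVF}) together with $d^2=0$ and $i_{X\wedge Y}=i_Y\circ i_X$, and your computations for them check out. For (ii), the reduction to decomposable $X$, $Y$ and the induction on $k+l$ is sound: peeling off $Z=X_k$ gives $[X'\wedge Z,Y]=(-1)^{l-1}[X',Y]\wedge Z+X'\wedge[Z,Y]$ from Definition \ref{SchoutenBracketMVF}(iii),(iv), and combining this with $i_{A\wedge B}=i_B\circ i_A$, the inductive hypothesis, part (iv) in the form $L_{X'\wedge Z}=-i_ZL_{X'}+L_Zi_{X'}$, and the graded commutativity of Proposition \ref{InteriorProduct1}(ii), the four resulting terms match the four terms of the right-hand side with all signs agreeing (e.g.\ $(-1)^{l-1+(k-2)l}=-(-1)^{(k-1)l}$ for the $i_ZL_{X'}i_Y\omega$ term); I would urge you to write that matching out in full, since it is the one place the argument could silently fail, but it does close. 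Your derivation of (iii) from (i) and (ii) is likewise correct: the terms involving $i_Yd\omega$ and $i_YL_Xd\omega$ cancel in pairs and the surviving sign $(-1)^{(k-1)l+k-1}=(-1)^{(k-1)(l+1)}=(-1)^{(k-1)(l-1)}$ is the right one. What your approach buys is a verification entirely within the sign conventions of this paper --- which is genuinely valuable, since conventions for $L_X$ along multivector fields differ across the literature --- at the cost of length, which is presumably why the author defers to \cite{FPR}.
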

\begin{proof}
See Proposition A.3 of \cite{FPR}.
\end{proof}

\section{Higher Affine Connections}
\begin{definition}
\label{HCDef}
A \textit{higher affine connection} (or \textit{higher connection}) on $M$ is a map 
\begin{align}
\nonumber
&\nabla: \mc{A}(M)\times \mc{A}(M)\rightarrow  \mc{A}(M),\hspace*{0.2in}(X,Y)\mapsto \nabla_X Y
\end{align}
such that
\begin{itemize}
\item[(i)] $\nabla_{X} Y\in A^{k+l-1}(M)$ for $X\in A^k(M)$, $Y\in A^l(M)$
\item[(ii)] $\nabla_{fX+X'}Y=f\nabla_XY+\nabla_{X'} Y$ for $X,X',Y\in \mc{A}(M)$
\item[(iii)] $\nabla_X(Y+Y')=\nabla_XY+\nabla_X Y'$ for $X,Y,Y'\in\mc{A}(M)$
\item[(iv)] $\nabla_X f=[X,f]$ for $X\in A^k(M)$, $f\in C^\infty(M)$
\item[(v)] $\nabla_X fY=[X,f]\wedge Y+f\nabla_X Y$, for $X\in A^k(M)$, $f\in C^\infty(M)$, $Y\in \mc{A}(M)$
\item[(vi)] $\nabla_f X=0$ for $f\in C^\infty(M)$
\end{itemize}
\end{definition}

\begin{corollary}
\label{HCCor1}
Let $\nabla$be a higher connection on $M$.  Then 
\begin{itemize}
\item[(i)] $\nabla_X fY=(-1)^{k-1}i_f X\wedge Y+f\nabla_X Y$ for $X\in A^k(M)$, $Y\in \mc{A}(M)$, and $f\in C^\infty(M)$; in particular, $\nabla_X f = (-1)^{k-1}i_f X$
\item[(ii)] the restriction of $\nabla$ to $A^1(M)\times A^1(M)$ is an affine connection on $M$.
\end{itemize}
\end{corollary}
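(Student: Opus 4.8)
The plan is to obtain both statements as formal consequences of Definition~\ref{HCDef} together with parts (v) and (ii) of Proposition~\ref{InteriorProductProp}; no genuinely new ingredient is required, so the task is essentially bookkeeping.

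For part (i), I would start from axiom (v) of Definition~\ref{HCDef}, which reads $\nabla_X(fY)=[X,f]\wedge Y+f\nabla_X Y$ for $X\in A^k(M)$, $f\in C^\infty(M)$, $Y\in\mc{A}(M)$, and then substitute the identity $[X,f]=(-1)^{k-1}i_f X$ from Proposition~\ref{InteriorProductProp}(v). This yields $\nabla_X(fY)=(-1)^{k-1}i_f X\wedge Y+f\nabla_X Y$, which is the asserted formula. For the ``in particular'' clause, the shortest route is to combine axiom (iv), $\nabla_X f=[X,f]$, with the same identity to get $\nabla_X f=(-1)^{k-1}i_f X$; alternatively one can specialize the general formula to $Y=1\in A^0(M)$, using that $\nabla_X 1=[X,1]=(-1)^{k-1}i_1 X=0$ because $X$, being a derivation in its first argument, kills the constant function $1$.

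For part (ii), I would verify that $\nabla$ restricted to $A^1(M)\times A^1(M)$ satisfies the axioms of an affine connection on $M$ one at a time. Closedness: axiom (i) of Definition~\ref{HCDef} gives $\nabla_X Y\in A^{1+1-1}(M)=A^1(M)=\Gamma(TM)$ for $X,Y\in A^1(M)$. Tensoriality in the first slot: axiom (ii) with $X'=0$ gives $\nabla_{fX}Y=f\nabla_X Y$, once one records that $\nabla_0 Y=0$ (which follows from the additivity contained in the $f=1$ case of (ii)); additivity in the first slot is again the $f=1$ case of (ii), and additivity in the second slot is axiom (iii). The Leibniz rule: axiom (v) gives $\nabla_X(fY)=[X,f]\wedge Y+f\nabla_X Y$, and for $X\in A^1(M)$ one has $[X,f]=L_X f=Xf\in C^\infty(M)$ by Definition~\ref{SchoutenBracketMVF}(ii), so $[X,f]\wedge Y=(Xf)Y$ and the rule becomes $\nabla_X(fY)=(Xf)Y+f\nabla_X Y$. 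Assembling these checks shows the restriction is an affine connection.

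I do not expect a serious obstacle, since the statement is a formal corollary; the only point deserving care is keeping the ``$fX$'' and ``$fY$'' in the axioms read consistently with the wedge/scalar-multiplication conventions on $\mc{A}(M)$ (so that, for instance, $[X,f]\wedge Y$ is genuinely scalar multiplication by the function $[X,f]$ when $X\in A^1(M)$), and treating the degenerate inputs $\nabla_0 Y$ and $\nabla_X 1$ explicitly rather than assuming them away.
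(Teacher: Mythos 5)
Your proposal is correct and follows essentially the same route as the paper's (much terser) proof: part (i) is axiom (v) of Definition~\ref{HCDef} (resp.\ axiom (iv) for the ``in particular'' clause) combined with the identity $[X,f]=(-1)^{k-1}i_fX$ from Proposition~\ref{InteriorProductProp}(v), and part (ii) is a direct verification of the affine-connection axioms from Definition~\ref{HCDef} together with the computation $[X,f]\wedge Y=(Xf)Y$ for $X\in A^1(M)$. Your extra care with the degenerate cases $\nabla_0Y$ and $\nabla_X\mathbf{1}$ is sound but not a departure from the paper's argument.
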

\begin{proof}
(i) of Corollary \ref{HCCor1} follows from  (iv) and (v) of Definition \ref{HCDef} and Proposition \ref{InteriorProductProp}-(v).  (ii) of  Corollary \ref{HCCor1}  follows from Definition \ref{HCDef} and (i) of Corollary \ref{HCCor1}. 
\end{proof}
\begin{remark}
It's imporant to stress that even when a higher connection is restricted to covariant differentiation along 1-vector fields, the result will not (in general) coincide with the usual extension of an affine connection to arbitrary tensor fields.  For example, if $\nabla$ is a higher connection and $X\in A^1(M)$, $Y\in A^l(M)$, and $Z\in \mc{A}(M)$, then, in general, 
\begin{equation}
\label{compareAffine}
\nabla_{X} (Y\wedge Z)\neq (\nabla_X Y)\wedge Z+Y\wedge \nabla_X Z.
\end{equation}
The right side of (\ref{compareAffine}) is exactly how an affine connection on $TM$ would operate on $Y\wedge Z$.  The fact that higher connections do not do this (in general) is a consequence of the fact that higher connections attempt to put $k$-vector fields and 1-vector fields on a more equal footing.  Hence, a higher connection will view a $k$-vector field as being   ``indivisible" in some sense.  Consequently, $\nabla_X(Y\wedge Z)$ will depend not only on $Y$ and $Z$, but also on $Y\wedge Z$.  
\end{remark}

\begin{proposition}
\label{ExistenceHC}
There is a one to one correspondence between affine connections on $TM$ and higher connections satisfying
\begin{align}
\label{ExistenceHC1}
\nabla_{X\wedge Y} Z&=X\wedge \nabla_Y Z+(-1)^{kl} Y\wedge \nabla_X Z\\
\label{ExistenceHC2}
\nabla_X(Y\wedge Z)&=(\nabla_X Y)\wedge Z+(-1)^{(k-1)l}Y\wedge \nabla_X Z,
\end{align}
for $X\in A^k(M)$, $Y\in A^l(M)$, and $Z\in \mc{A}(M)$. 
\end{proposition}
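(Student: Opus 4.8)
The plan is to exhibit two mutually inverse maps. In one direction I build a map $\Psi$ sending an affine connection $D$ on $TM$ to a higher connection $\nabla$ manufactured from $D$; in the other direction I take the map $\Phi$ sending a higher connection $\nabla$ satisfying (\ref{ExistenceHC1})--(\ref{ExistenceHC2}) to its restriction to $A^1(M)\times A^1(M)$, which is an affine connection on $M$ by Corollary \ref{HCCor1}(ii). Two things then need to be shown: (a) $\Psi(D)$ is well defined, is a higher connection in the sense of Definition \ref{HCDef}, and satisfies (\ref{ExistenceHC1})--(\ref{ExistenceHC2}); and (b) $\Phi\circ\Psi=\mathrm{id}$ and $\Psi\circ\Phi=\mathrm{id}$, the latter being in effect a uniqueness statement for higher connections with the two properties.

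To construct $\Psi$, I first extend $D$ by the Leibniz rule to the induced connection on each $\wedge^l TM$, which gives $\nabla_X Y:=\sum_j Y_1\wedge\cdots\wedge D_X Y_j\wedge\cdots\wedge Y_l$ for $X\in A^1(M)$ and $Y=Y_1\wedge\cdots\wedge Y_l$, and I set $\nabla_X f:=Xf$, $\nabla_f Y:=0$ for $f\in C^\infty(M)$. Then for $X\in A^k(M)$ with $k\ge 1$, decomposable as $X=X_1\wedge\cdots\wedge X_k$, I put
\[
\nabla_X Y:=\sum_{i=1}^k(-1)^{k-i}\,X_1\wedge\cdots\wedge\widehat{X}_i\wedge\cdots\wedge X_k\wedge\nabla_{X_i}Y,
\]
using the operators already defined. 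At a point $p$ each $\nabla_{X_i}Y$ depends only on $X_i|_p$, so $(X_1,\dots,X_k)\mapsto(\nabla_X Y)_p$ is $\mathbb R$-multilinear in $(X_1|_p,\dots,X_k|_p)$, and a short computation shows it is alternating (it vanishes when two adjacent factors agree); applying Theorem \ref{UniversalPropExterior} fiberwise, it factors through $\wedge^k T_pM$, and since everything varies smoothly this defines a $C^\infty(M)$-linear operator $\nabla_X Y$ for all $X\in A^k(M)$. This is $\Psi(D)$. Checking the axioms of Definition \ref{HCDef}: (i), (ii), (iii), (vi) are immediate; (iv) $\nabla_X f=[X,f]$ follows on decomposable $X$ from the displayed formula together with $[X_i,f]=X_i f$ and the identity $[X,f]=(-1)^{k-1}i_f X$ of Proposition \ref{InteriorProductProp}(v), the signs $(-1)^{k-i}$ matching once the scalar factors are moved out front; (v) is the same argument reduced to the $k=1$ Leibniz case. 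Finally, (\ref{ExistenceHC1}) is verified by splitting $X=X_1\wedge(X_2\wedge\cdots\wedge X_k)$ in the displayed formula and then invoking $C^\infty$-linearity in the first slot to pass from decomposables to all $X$, and (\ref{ExistenceHC2}) is proved by induction on $k$, the base case $k=1$ being the Leibniz rule for the connection induced on $\wedge^\bullet TM$ and the inductive step combining (\ref{ExistenceHC1}) with the induction hypothesis.

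For the bijection, $\Phi\circ\Psi=\mathrm{id}$ is immediate since the Leibniz extension restricts to $D$ on $1$-vector fields. For $\Psi\circ\Phi=\mathrm{id}$, let $\nabla$ satisfy (\ref{ExistenceHC1})--(\ref{ExistenceHC2}) and put $D:=\nabla|_{A^1(M)\times A^1(M)}$. On decomposable $X=X_1\wedge\cdots\wedge X_k$, $Y=Y_1\wedge\cdots\wedge Y_l$, repeated use of (\ref{ExistenceHC1}) peels the first slot down to $1$-vectors, repeated use of (\ref{ExistenceHC2}) with a $1$-vector in the first slot (i.e. the Leibniz rule) peels the second slot down to $1$-vectors, and axioms (iv)--(v) of Definition \ref{HCDef} absorb the scalar terms; this forces $\nabla_X Y$ to equal exactly the expression produced by $\Psi(D)$. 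To go from decomposables to arbitrary multivector fields I use that $\nabla$ is local: it is tensorial in the first argument by axiom (ii), and a bump-function argument using axiom (v) and Proposition \ref{InteriorProductProp}(v) shows $(\nabla_X Y)_p=0$ whenever $Y$ vanishes near $p$. Since on a chart every $k$-vector field is a finite $C^\infty$-combination of decomposable coordinate $k$-vector fields, axioms (ii), (iii), (v) --- which $\Psi(D)$ obeys as well --- propagate $\nabla=\Psi(D)$ from decomposables to all of $\mc{A}(M)\times\mc{A}(M)$. Hence $\Phi$ and $\Psi$ are mutually inverse.

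The main obstacle is the well-definedness of $\Psi$: showing the explicit formula for $\nabla_X Y$ on a decomposable $X$ is independent of the chosen factorization and descends consistently to all of $A^k(M)$; I resolve this by the "alternating and fiberwise $\mathbb R$-multilinear $\Rightarrow$ factors through $\wedge^k$" argument via Theorem \ref{UniversalPropExterior}. The accompanying subtlety is sign bookkeeping: the normalization in the displayed formula must be chosen so as to be simultaneously compatible with (\ref{ExistenceHC1}), (\ref{ExistenceHC2}) and with the prescriptions $\nabla_X f=[X,f]$ and $\nabla_f X=0$ of Definition \ref{HCDef}; the factor $(-1)^{k-i}$ achieves this, after which the verification of the remaining axioms and of (\ref{ExistenceHC2}) is a routine induction on degree.
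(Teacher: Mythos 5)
Your proposal is correct and takes essentially the same route as the paper: the same restriction map in one direction, the same explicit extension formula $\nabla_X Y=\sum_i(-1)^{k-i}X_1\wedge\cdots\wedge\widehat{X}_i\wedge\cdots\wedge X_k\wedge\nabla_{X_i}Y$ (with $\nabla_{X_i}$ the Leibniz extension of the affine connection) in the other, justified by the same alternating-multilinearity/tensoriality argument, and the same peeling via (\ref{ExistenceHC1})--(\ref{ExistenceHC2}) for uniqueness. Your explicit locality/bump-function remark for passing from decomposables to general multivector fields in the uniqueness step is a point the paper leaves implicit, but it is not a different method.
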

\begin{proof}
Let $\mc{H}_0$ be the set of all higher connections satisfying (\ref{ExistenceHC1}) and (\ref{ExistenceHC2}) and let $\mc{C}$ be the set of all affine connections on $TM$.  Let $\varphi: \mc{H}_0\rightarrow \mc{C}$ be the map which sends $\nabla\in \mc{H}_0$ to its restriction $\nabla|_{A^1(M)\times A^1(M)}\in \mc{C}$.  To see that this map is injective, let $\nabla$ be any higher connection which satisfies (\ref{ExistenceHC1}) and let $X=X_1\wedge \cdots \wedge X_k$ be a decomposable $k$-vector field.  Then for $Y\in \mc{A}(M)$, we have
\begin{equation}
\label{ExplicitForm1}
\nabla_X Y=\sum_{j=1}^k(-1)^{k-j}X_1\wedge \cdots \wedge \wh{X}_j\wedge \cdots \wedge X_k \wedge \nabla_{X_j} Y,
\end{equation}
where $\wh{X}_j$ denotes the omission of $X_j$.  On the other hand, if $\nabla$ satisfies (\ref{ExistenceHC2}) and $Y=Y_1\wedge \cdots \wedge Y_l$ is a decomposable $l$-vector field, then for $X\in \mc{A}(M)$, we have
\begin{equation}
\label{ExplicitForm2}
\nabla_X Y= \sum_{j=1}^l (-1)^{j-1}(\nabla_X Y_j)\wedge Y_1\wedge \cdots \wedge \widehat{Y_j}\wedge \cdots \wedge Y_l.
\end{equation}
Equations (\ref{ExplicitForm1}) and (\ref{ExplicitForm2}) imply that if $\nabla$ satisfies \textit{both}  (\ref{ExistenceHC1}) and  (\ref{ExistenceHC2}), then $\nabla$ is completely determined as a higher connection by its restriction to $A^1(M)\times A^1(M)$, which is simply an affine connection on $TM$.  Consequently, any two higher connections in $\mc{H}_0$ which agree on $A^1(M)\times A^1(M)$ must be the same.  This proves that $\varphi$ is injective.

To see that $\varphi$ is surjective, let $\widetilde{\nabla}$ be any affine connection on $TM$.   We now extend $\widetilde{\nabla}$ to a higher connection $\nabla'$ as follows.  For $X\in A^k(M)$ and $f\in C^\infty(M)$, define $\nabla'_f\equiv 0$ and $\nabla'_X f:=[X,f]$.  To extend $\nabla'$ to $A^k(M)\times A^l(M)$ for $k,l>0$, let $X=X_1\wedge \cdots \wedge X_k\in A^k(M)$ be a decomposable $k$-vector field and let $Y\in A^l(M)$ be any $l$-vector field.  We define $\nabla'_XY$ via
\begin{equation}
\label{ExplicitForm3A}
\nabla'_X Y=\sum_{j=1}^k (-1)^{k-j} X_1\wedge \cdots \wedge\widehat{X_j}\wedge \cdots \wedge X_k\wedge \widetilde{\nabla}_{X_j} Y,
\end{equation}
where $\widetilde{\nabla}_{X_j} Y$ is defined in the usual way and $\widehat{X_j}$ denotes omission as usual.  With $Y$ fixed, let
\begin{equation}
\nonumber
\rho(X_1,\dots, X_k):=\nabla'_XY.
\end{equation}
A direct calculation shows that $\rho$ is an alternating $C^\infty(M)$-multilinear map.  This implies that (\ref{ExplicitForm3A}) extends to all $X\in A^k(M)$ by $C^\infty(M)$-linearity.  Using Lemma \ref{SchoutenLemma} from section 4, one can show that 
\begin{equation}
\nonumber
\nabla'_Xf Y=[X,f]\wedge Y+f\nabla'_X Y,
\end{equation}
for $f\in C^\infty(M)$.  The other axioms of Definition \ref{HCDef} are are easily verified.  Hence, $\nabla'$ is a higher connection. Another straightforward calculation shows that $\nabla'$ also satisfies (\ref{InducedHC1}) and (\ref{InducedHC2}).   Furthermore, (\ref{ExplicitForm3A}) implies that $\varphi(\nabla')=\widetilde{\nabla}$.  This completes the proof.
\end{proof}

\noindent Proposition \ref{ExistenceHC} motivates the following definition.
\begin{definition}
\label{InducedHC}
A higher connection $\nabla$ is called \textit{induced} if it satisfies
\begin{align}
\label{InducedHC1}
\nabla_{X\wedge Y}Z&=X\wedge \nabla_Y Z+(-1)^{kl}Y\wedge \nabla_X Z\\
\label{InducedHC2}
\nabla_X(Y\wedge Z)&=(\nabla_X Y)\wedge Z+(-1)^{(k-1)l}Y\wedge \nabla_X Z
\end{align}
for $X\in A^k(M)$, $Y\in A^l(M)$, and $Z\in \mc{A}(M)$. 
\end{definition}

\begin{remark}
For an affine connection $\widetilde{\nabla}$ on $TM$, we will use the same symbol $\widetilde{\nabla}$ to denote the induced higher connection associated to $\widetilde{\nabla}$ that was constructed in the proof of Proposition \ref{ExistenceHC}.  
\end{remark}

\begin{remark}
The higher connection given by Proposition \ref{ExistenceHC} is equivalent to the covariant derivative introduced in \cite{BPH}.  To see this, let $\nabla$ be a higher connection given by Proposition \ref{ExistenceHC} and define $\nabla'$ by
\begin{equation}
\nonumber
{\nabla'}_XY:=(-1)^{k-1} \nabla_X Y.
\end{equation}
for $X\in A^k(M)$, $Y\in \mc{A}(M)$.  Then a direct calculation shows
\begin{align}
\nonumber
{\nabla'}_{X\wedge Y}Z&=(-1)^k X\wedge {\nabla'}_Y Z+(-1)^{l(k-1)}Y\wedge {\nabla'}_X Z\\
\nonumber
{\nabla'}_{X}(Y\wedge Z)&=({\nabla'}_X Y)\wedge Z+(-1)^{l(k-1)}Y\wedge {\nabla'}_X Z
\end{align}
which is precisely condition (\ref{BPHPropertyA}) and (\ref{BPHPropertyB}) from \cite{BPH}. 
\end{remark}

\begin{lemma}
\label{DifferenceHC}
Let $\nabla$ and ${\nabla}'$ be two higher connections on $M$ and let $F:\mc{A}(M)\times \mc{A}(M)\rightarrow \mc{A}(M)$ be given by $F(X,Y):=\nabla_XY-{\nabla'}_X Y$.  Then $F$ is $C^\infty(M)$-linear in $X$ and $Y$.  In particular, $F^{k,l}:=F|_{A^k(M)\times A^l(M)}$ is a section of the bundle $\wedge^{k+l-1} TM\otimes \wedge^k T^\ast M\otimes \wedge^l T^\ast M$ for $k,l>0$ with $k+l-1\le n:=\dim M$.
\end{lemma}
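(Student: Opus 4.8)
The plan is to check that $F$ is $C^\infty(M)$-bilinear directly from the axioms of Definition \ref{HCDef}, and then to invoke the standard tensoriality principle to realize $F^{k,l}$ as a section of the claimed bundle.

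First I would dispose of $C^\infty(M)$-linearity in the first slot: axiom (ii) of Definition \ref{HCDef} already states $\nabla_{fX+X'}Y = f\nabla_X Y + \nabla_{X'}Y$, and the same holds for $\nabla'$, so subtracting gives $F(fX+X',Y) = fF(X,Y) + F(X',Y)$ for all $f \in C^\infty(M)$ and all $X,X',Y \in \mc{A}(M)$. For the second slot, additivity $F(X,Y+Y') = F(X,Y)+F(X,Y')$ is axiom (iii), while on the degree-zero part $F$ vanishes identically since $\nabla_X g = [X,g] = \nabla'_X g$ by axiom (iv). The one computation with any content is homogeneity: for $X \in A^k(M)$, $f \in C^\infty(M)$, $Y \in \mc{A}(M)$, axiom (v) gives $\nabla_X(fY) = [X,f]\wedge Y + f\nabla_X Y$ and $\nabla'_X(fY) = [X,f]\wedge Y + f\nabla'_X Y$; the ``anomalous'' term $[X,f]\wedge Y$ is identical for both connections and cancels upon subtraction, leaving $F(X,fY) = fF(X,Y)$. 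Since $F$ is $\mathbb{R}$-linear and additive in $X$, it suffices to verify this for homogeneous $X$, which is exactly the form in which axiom (v) is phrased. Hence $F$ is $C^\infty(M)$-bilinear.

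Next I would apply the familiar localization (tensoriality) lemma for $C^\infty(M)$-multilinear maps of sections: a $C^\infty(M)$-bilinear $F$ has the property that $F(X,Y)(p)$ depends only on $X(p)$ and $Y(p)$. Concretely, fix $p \in M$; if $X(p) = 0$, write $X = \sum_i f_i \sigma_i$ near $p$ in terms of a local frame of $\wedge^k TM$ with $f_i(p) = 0$, cut off the $f_i$ and $\sigma_i$ by a bump function equal to $1$ on a neighborhood of $p$ to obtain global sections, and conclude $F(X,Y)(p) = \sum_i f_i(p)F(\sigma_i,Y)(p) = 0$; the argument in $Y$ is symmetric. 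Therefore, for fixed $k,l > 0$ the restriction $F^{k,l}$ is induced by a vector bundle homomorphism $\wedge^k TM \otimes \wedge^l TM \to \wedge^{k+l-1}TM$ (the zero homomorphism, so $F^{k,l}\equiv 0$, when $k+l-1 > n$), i.e.\ a section of $(\wedge^k TM)^\ast \otimes (\wedge^l TM)^\ast \otimes \wedge^{k+l-1}TM$. Applying the natural fiberwise isomorphism $(\wedge^j TM)^\ast \cong \wedge^j T^\ast M$ of Proposition \ref{VectorCovectorProp} identifies this bundle with $\wedge^{k+l-1}TM \otimes \wedge^k T^\ast M \otimes \wedge^l T^\ast M$, and the hypothesis $k+l-1 \le n$ is exactly what keeps $\wedge^{k+l-1}TM$ nonzero, so that the assertion is not vacuous.

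I do not expect a genuine obstacle: this is the standard ``difference of connections is a tensor'' argument adapted to the graded setting. The only points needing a little care are (a) that the homogeneity step really uses axiom (v) for arbitrary degree $k$ (not merely $k=1$) and for arbitrary $Y \in \mc{A}(M)$, both guaranteed by the wording of Definition \ref{HCDef}(v), and (b) that the localization argument requires nothing beyond bump functions and local frames, which exist on any smooth manifold.
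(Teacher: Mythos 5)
Your proof is correct and follows essentially the same route as the paper: $C^\infty(M)$-linearity in the first argument is immediate from axiom (ii) of Definition \ref{HCDef}, and linearity in the second argument comes from axiom (v), with the term $[X,f]\wedge Y$ cancelling in the difference $\nabla_X(fY)-\nabla'_X(fY)$. The paper leaves the final tensoriality/localization step implicit, so your spelled-out bump-function argument and the identification via Proposition \ref{VectorCovectorProp} are just a more explicit rendering of the same proof.
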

\begin{proof}
Let $X\in A^k(M)$, $Y\in \mc{A}(M)$, and $h\in C^\infty(M)$.  Its clear that $F(hX,Y)=hF(X,Y)$.  We now show that $F$ is $C^\infty(M)$-linear in $Y$:
\begin{align}
\nonumber
F(X,hY)&=\nabla_{X}(hY)-{\nabla'}_{X}(hY)\\
\nonumber
&=[X,h]\wedge Y+h\nabla_X Y- [X,h]\wedge Y-h{\nabla'}_X Y\\
\nonumber
&=hF(X,Y).
\end{align}
\end{proof}

\begin{theorem}
\label{HigherConnectionData}
Let $\nabla$ be any higher connection on $M$.  Then there exists
\begin{itemize}
\item[(i)]  a unique affine connection $\widetilde{\nabla}$ on $TM$, and
\item[(ii)] a unique collection of sections $F^{k,l}$ of  the bundle 
\begin{equation}
\nonumber
E^{k,l}:=\mbox{$\wedge^{k+l-1} TM$}\otimes \mbox{$\wedge^k T^\ast M$}\otimes  \mbox{$\wedge^lT^\ast M$}
\end{equation}
for $k,l>0$ with $k+l-1\le n:=\dim M$  
\end{itemize}
such that $\forall~X\in A^k(M),~Y\in A^l(M)$ 
\begin{align}
\label{HCData1}
\nabla_X Y = \widetilde{\nabla}_X Y+F^{k,l}(X,Y),\hspace*{0.1in} 
\end{align}
where $F^{1,1}\equiv 0$ and $\widetilde{\nabla}_X Y$ in (\ref{HCData1}) is understood to be the higher connection induced by the affine connection $\widetilde{\nabla}$ according to  (\ref{ExistenceHC1}) and (\ref{ExistenceHC2}).  Conversely, any affine connection $\widetilde{\nabla}$ on $TM$ together with any collection of sections $F^{k,l}\in \Gamma(E^{k,l})$ for $k,l>0$ with $k+l-1\le n$ and $F^{1,1}\equiv 0$ determines a unique higher connection on $M$ which satisfies (\ref{HCData1}).  In particular, there is a bijection between the space of all higher connections and the set of all pairs of the form $(\widetilde{\nabla},\{F^{k,l}\})$, where $\widetilde{\nabla}$ is an affine connection on $TM$ and $F^{k,l}\in \Gamma(E^{k,l})$ for $k,l>0$ with $k+l-1\le n$ and $F^{1,1}\equiv 0$.
\end{theorem}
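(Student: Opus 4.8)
The plan is to realize the claimed bijection as a pair of mutually inverse constructions, using Corollary \ref{HCCor1}, Proposition \ref{ExistenceHC}, and Lemma \ref{DifferenceHC} as the essential input. First I would handle the forward direction: given an arbitrary higher connection $\nabla$, let $\widetilde{\nabla}$ be its restriction to $A^1(M)\times A^1(M)$, which is an affine connection on $TM$ by Corollary \ref{HCCor1}(ii); by Proposition \ref{ExistenceHC} this affine connection extends uniquely to an induced higher connection, which I also denote $\widetilde{\nabla}$. Put $F(X,Y):=\nabla_XY-\widetilde{\nabla}_XY$. Since both $\nabla$ and the induced $\widetilde{\nabla}$ are higher connections, Lemma \ref{DifferenceHC} shows $F$ is $C^\infty(M)$-bilinear, so each $F^{k,l}:=F|_{A^k(M)\times A^l(M)}$ corresponds, under the standard identification of tensor fields with $C^\infty(M)$-multilinear maps, to a section of $E^{k,l}$. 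Comparing axioms (iv) and (vi) of Definition \ref{HCDef} for $\nabla$ and for $\widetilde{\nabla}$ forces $F^{k,l}\equiv 0$ whenever $k=0$ or $l=0$, while $F^{k,l}$ vanishes automatically when $k+l-1>n$ because $A^{k+l-1}(M)=0$ there; thus the genuine data is the family $\{F^{k,l}:k,l>0,\ k+l-1\le n\}$, and $F^{1,1}\equiv 0$ since $\nabla$ and $\widetilde{\nabla}$ agree on $A^1(M)\times A^1(M)$ by construction. Equation (\ref{HCData1}) then holds by the definition of $F$.

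Next I would establish uniqueness of this data. If also $\nabla_XY=\widetilde{\nabla}'_XY+G^{k,l}(X,Y)$ for some affine connection $\widetilde{\nabla}'$ and sections $G^{k,l}\in\Gamma(E^{k,l})$ with $G^{1,1}\equiv 0$, then restricting to $A^1(M)\times A^1(M)$ and using $G^{1,1}\equiv 0$ gives $\widetilde{\nabla}'_XY=\nabla_XY=\widetilde{\nabla}_XY$ for $X,Y\in A^1(M)$; hence $\widetilde{\nabla}'=\widetilde{\nabla}$ as affine connections, and by the injectivity proved in Proposition \ref{ExistenceHC} they agree as induced higher connections, so that $G^{k,l}(X,Y)=\nabla_XY-\widetilde{\nabla}_XY=F^{k,l}(X,Y)$ for all $k,l$.

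For the converse, given an affine connection $\widetilde{\nabla}$ on $TM$ and sections $F^{k,l}\in\Gamma(E^{k,l})$ for $k,l>0$ with $k+l-1\le n$ and $F^{1,1}\equiv 0$, I would extend the family by setting $F^{k,l}\equiv 0$ whenever $k=0$, $l=0$, or $k+l-1>n$, view each $F^{k,l}$ as a $C^\infty(M)$-bilinear map $A^k(M)\times A^l(M)\to A^{k+l-1}(M)$, define $\nabla$ on homogeneous arguments by (\ref{HCData1}) with $\widetilde{\nabla}$ on the right side the induced higher connection of Proposition \ref{ExistenceHC}, and extend $\nabla$ to $\mc{A}(M)\times\mc{A}(M)$ by additivity in each slot (unambiguous since $\mc{A}(M)=\bigoplus_k A^k(M)$). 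Then I would check Definition \ref{HCDef}: (i) holds since $\widetilde{\nabla}_XY$ and $F^{k,l}(X,Y)$ both lie in $A^{k+l-1}(M)$; (ii) and (iii) follow from $C^\infty(M)$-linearity of the induced $\widetilde{\nabla}$ in its first slot, of $F^{k,l}$ in both slots, and the additive extension; (iv) and (vi) hold by construction (with $F^{k,0}\equiv 0\equiv F^{0,l}$); and (v) follows from axiom (v) for the higher connection $\widetilde{\nabla}$ together with $C^\infty(M)$-linearity of $F^{k,l}$ in its second argument, since $\nabla_X(fY)=\widetilde{\nabla}_X(fY)+F^{k,l}(X,fY)=[X,f]\wedge Y+f\widetilde{\nabla}_XY+fF^{k,l}(X,Y)=[X,f]\wedge Y+f\nabla_XY$.

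Finally I would verify the two constructions are mutually inverse: starting from $\nabla$, the pair $(\widetilde{\nabla},\{F^{k,l}\})$ recovers $\nabla$ through (\ref{HCData1}) by the very definition of $F$; starting from a pair $(\widetilde{\nabla},\{F^{k,l}\})$, the associated $\nabla$ restricts on $A^1(M)\times A^1(M)$ to $\widetilde{\nabla}+F^{1,1}=\widetilde{\nabla}$, so the forward construction returns the same affine connection and then the same tensors $\nabla_XY-\widetilde{\nabla}_XY=F^{k,l}(X,Y)$. I expect no serious obstacle here, since Proposition \ref{ExistenceHC} already does the heavy lifting; the only delicate points are the tensor-field/multilinear-map identification needed to regard $F$ as sections of the $E^{k,l}$ and the bookkeeping required to extend the finite family $\{F^{k,l}\}$ consistently (by zero) over the whole graded algebra $\mc{A}(M)$.
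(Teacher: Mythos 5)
Your proposal is correct and follows essentially the same route as the paper: restrict $\nabla$ to $A^1(M)\times A^1(M)$, extend via Proposition \ref{ExistenceHC}, invoke Lemma \ref{DifferenceHC} to make the difference tensorial, prove uniqueness by restriction to degree one, and verify the converse by checking the axioms of Definition \ref{HCDef} (with axiom (v) being the only nontrivial one, handled by the same computation). Your explicit bookkeeping of the degenerate cases $k=0$, $l=0$, and $k+l-1>n$ is a minor elaboration of what the paper leaves implicit.
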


\begin{proof}
Let $\nabla$ be any higher connection on $M$ and let $\widetilde{\nabla}$ be the affine connection on $TM$ defined by $\widetilde{\nabla}_XY:=\nabla_XY$ for $X,Y\in A^1(M)$.  Extend $\widetilde{\nabla}$ to a higher connection on $M$ via  (\ref{ExistenceHC1}) and (\ref{ExistenceHC2}).  By Lemma \ref{DifferenceHC}, 
\begin{equation}
F^{k,l}:=(\nabla-\widetilde{\nabla})|_{A^k(M)\times A^l(M)}:A^k(M)\times A^l(M)\rightarrow A^{k+l-1}(M)
\end{equation}
is a section of $E^{k,l}:=\wedge^{k+l-1} TM\otimes \wedge^k T^\ast M\otimes \wedge^l T^\ast M$ for $k,l>0$ with $k+l-1\le n:=\dim M$.  (Note that $F^{1,1}\equiv 0$.)   Hence, we have
\begin{equation}
\nonumber
\nabla_X Y= \widetilde{\nabla}_X Y+F^{k,l}(X,Y),
\end{equation}
for $X\in A^k(M)$, $Y\in A^l(M)$, $k,l>0$.  To see that $(\widetilde{\nabla},\{F^{k,l}\})$ is unique, suppose that $(\widehat{\nabla},\{G^{k,l}\})$ is another such pair which satisfies (\ref{HCData1}).  Then for all $X,Y\in A^1(M)$, we have
\begin{equation}
\nonumber
\widetilde{\nabla}_XY=\nabla_XY=\widehat{\nabla}_XY.
\end{equation}
Hence, $\widetilde{\nabla}=\widehat{\nabla}$.  This fact together with (\ref{HCData1}) then implies that $F^{k,l}=G^{k,l}$ for all $k,l$.

Conversely, suppose $\widetilde{\nabla}$ is an affine connection on $TM$ (extended to a higher connection on $M$ via  (\ref{ExistenceHC1}) and (\ref{ExistenceHC2})) and $F^{k,l}$ is a section of the bundle $E^{k,l}$ for $k,l>0$ with $k+l-1\le n$ and $F^{1,1}\equiv 0$.  For $X\in A^k(M)$,  $Y\in A^l(M)$ with $k,l>0$ and $k+l-1\le n$, define $\nabla_XY$ according to (\ref{HCData1}) and set $\nabla_X f:=[X,f]$ and $\nabla_f :=0$ for $f\in C^\infty(M)$.  Its clear that $\nabla$ satisfies all the axioms of Definition \ref{HCDef} with the possible exception of axiom (v).  To verify axiom (v), let 
$X\in A^k(M)$, $Y\in A^l(M)$, and $f\in C^\infty(M)$ with $k,l>0$ and $k+l-1\le n$.  Then
\begin{align}
\nonumber
\nabla_X fY&=\widetilde{\nabla}_X(fY)+F^{k,l}(X,fY)\\
\nonumber
&=[X,f]\wedge Y+f\widetilde{\nabla}_X(Y)+fF^{k,l}(X,Y)\\
\nonumber
&=[X,f]\wedge Y+f\nabla_X Y.
\end{align}
This proves that $\nabla$ is a higher connection.   Furthermore, by the argument given in the first part of the proof, the pair $(\widetilde{\nabla},\{F^{k,l}\})$ can be recovered from $\nabla$.  This proves the bijection between the space of higher connections and all pairs of the form $(\widetilde{\nabla},\{F^{k,l}\})$ where $\widetilde{\nabla}$ is an affine connection on $TM$ and $F^{k,l}\in \Gamma(E^{k,l})$ for all $k,l>0$ with $k+l-1\le n$ and $F^{1,1}\equiv 0$.
\end{proof}
\begin{remark}
Let $\nabla$ be a higher connection and let $(\widetilde{\nabla},\{F^{k,l}\})$ be the unique pair associated to $\nabla$ by Theorem \ref{HigherConnectionData}.  For convenience, we set $F^{k,l}\equiv 0$ whenever $kl=0$ or $k+l-1>\dim M$.  With these definitions, Theorem \ref{HigherConnectionData} implies 
\begin{equation}
\nabla_X Y=\widetilde{\nabla}_X Y+F^{k,l}(X,Y),~\forall~ k,l\ge 0.
\end{equation}
\end{remark}
\begin{theorem}
\label{InducedResultA}
Let $\nabla$ be a higher connection and let $(\widetilde{\nabla},\{F^{k,l}\})$ be the unique pair associated to $\nabla$ by Theorem \ref{HigherConnectionData}.  Then $\nabla$ is an induced higher connection (i.e., one that satisfies (\ref{InducedHC1}) and (\ref{InducedHC2})) iff $F^{k,l}\equiv 0$, $\forall~k,l$. 
\end{theorem}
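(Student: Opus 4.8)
The plan is to reduce everything to the difference tensor. Write $\nabla_X Y=\widetilde\nabla_X Y+F^{k,l}(X,Y)$ as in Theorem \ref{HigherConnectionData}, and recall that the induced higher connection $\widetilde\nabla$ already satisfies \emph{both} (\ref{InducedHC1}) and (\ref{InducedHC2}) by Proposition \ref{ExistenceHC}. The direction ($\Leftarrow$) is then immediate: if every $F^{k,l}$ vanishes then $\nabla$ agrees with $\widetilde\nabla$ on all homogeneous pairs, and on functions one has $\nabla_Xf=[X,f]=\widetilde\nabla_Xf$ and $\nabla_f=0=\widetilde\nabla_f$ by axioms (iv), (vi) of Definition \ref{HCDef}; hence $\nabla=\widetilde\nabla$, which is induced by construction. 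So the content is the direction ($\Rightarrow$).

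For ($\Rightarrow$), assume $\nabla$ satisfies (\ref{InducedHC1}) and (\ref{InducedHC2}). Since $\widetilde\nabla$ satisfies the same two identities and the wedge product is $\mathbb{R}$-bilinear, subtracting shows that $F:=\nabla-\widetilde\nabla$ obeys the linearized Leibniz rules
\begin{align}
F(X\wedge Y,Z)&=X\wedge F(Y,Z)+(-1)^{kl}Y\wedge F(X,Z),\\
F(X,Y\wedge Z)&=F(X,Y)\wedge Z+(-1)^{(k-1)l}Y\wedge F(X,Z),
\end{align}
for $X\in A^k(M)$, $Y\in A^l(M)$, $Z\in \mc{A}(M)$. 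By Lemma \ref{DifferenceHC} each $F^{k,l}$ is $C^\infty(M)$-linear in both arguments, hence a genuine tensor field; so it suffices to prove that $F^{k,l}$ vanishes on decomposable inputs, since every multivector field is locally a finite sum of decomposable ones. (The cases $kl=0$ or $k+l-1>\dim M$ are vacuous: there $F^{k,l}$ maps into a trivial space or acts on a function slot, and is zero by convention.)

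The engine is two nested inductions. First, fix the first slot to lie in $A^1(M)$ and induct on $l$ to show $F^{1,l}\equiv 0$ for all $l\ge 1$; the base case $l=1$ is the hypothesis $F^{1,1}\equiv 0$. For the step, take $X\in A^1(M)$, $Y\in A^1(M)$, $Z\in A^l(M)$ and apply the second linearized rule to $Y\wedge Z$: the first term carries $F^{1,1}(X,Y)=0$ and the second carries $F^{1,l}(X,Z)=0$ by the inductive hypothesis, so $F^{1,l+1}$ annihilates every decomposable $(l+1)$-vector and therefore vanishes identically by tensoriality. Second, with $F^{1,l}\equiv 0$ now known for all $l$, use the \emph{first} linearized rule to peel one $1$-vector at a time off the first slot: for a decomposable $X_1\wedge\cdots\wedge X_k$ one gets $F^{k,l}(X_1\wedge\cdots\wedge X_k,Z)=X_1\wedge F^{k-1,l}(X_2\wedge\cdots\wedge X_k,Z)$, the other term being $(X_2\wedge\cdots\wedge X_k)\wedge F^{1,l}(X_1,Z)=0$; iterating down to $F^{1,l}$ shows $F^{k,l}$ vanishes on decomposables, hence identically.

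The only step requiring genuine care — the main ``obstacle,'' such as it is — is verifying that the linearized identities legitimately transfer from $\nabla$ and $\widetilde\nabla$ to $F$: this relies on $\widetilde\nabla$ satisfying \emph{both} (\ref{InducedHC1}) and (\ref{InducedHC2}) (Proposition \ref{ExistenceHC} and its proof) and on the $C^\infty(M)$-linearity in each argument from Lemma \ref{DifferenceHC}, which is exactly what upgrades ``vanishing on decomposables'' to ``vanishing identically.'' Everything after that is routine bookkeeping of signs and degrees.
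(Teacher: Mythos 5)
Your proof is correct, but it follows a genuinely different route from the paper's. The paper proves the forward direction by contradiction: assuming some twist field is nonzero, it splits into three cases ($F^{k,1}\neq 0$, $F^{1,l}\neq 0$, or neither but some $F^{k,l}\neq 0$ with $k,l>1$), picks the \emph{minimal} offending index in each case, and compares the two expansions of $\nabla_{X\wedge X'}Y$ (resp.\ $\nabla_X(Y\wedge Y')$) to force the minimal twist field to vanish after all. You instead subtract the two instances of (\ref{InducedHC1}) and (\ref{InducedHC2}) satisfied by $\nabla$ and by the induced extension $\widetilde\nabla$ to obtain linearized Leibniz rules for $F$, and then run a clean double induction: first $F^{1,l}\equiv 0$ by induction on $l$, then $F^{k,l}\equiv 0$ by peeling $1$-vectors off the first slot. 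Your subtracted identities are precisely equations (\ref{Special3}) and (\ref{Lower3}) of Propositions \ref{UpperInducedProp} and \ref{LowerInducedProp}, which the paper proves \emph{after} this theorem by exactly the subtraction you perform; so your argument in effect proves those two propositions first and then combines them ($F^{1,l}\equiv 0$ from the upper-induced relation, then $F^{k,l}$ determined by the $F^{1,l}$ from the lower-induced relation). What your route buys is a direct, constructive derivation with no case split and no minimality device, at the cost of having to justify carefully (as you do) that tensoriality from Lemma \ref{DifferenceHC} upgrades vanishing on decomposables to identical vanishing and that the degenerate cases $kl=0$ or $k+l-1>\dim M$ are vacuous; the paper's contradiction argument avoids organizing the induction but is more ad hoc. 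Both are sound.
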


\begin{proof}
Extend $\widetilde{\nabla}$ to a higher connection via (\ref{InducedHC1}) and (\ref{InducedHC2}).

$(\Rightarrow)$. Let $\nabla$ be an induced higher connection and suppose that not all $F^{i,j}=0$.  Since $F^{1,1}\equiv 0$ by Theorem \ref{HigherConnectionData}, there are three possible cases:
\begin{itemize}
\item[1.] $F^{k,1}\neq 0$ for some $k>1$
\item[2.] $F^{1,l}\neq 0$ for some $l>1$
\item[3.] $F^{k,l}=0$ whenever $k=1$ or $l=1$, but there exists some $k,l>1$ such that $F^{k,l}\neq 0$
\end{itemize}
For case 1, let $k:=\min\{i~|~F^{i,1}\neq 0\}$.  Since $F^{k,1}\neq 0$, there exists $X,Y\in A^{1}(M)$ and $X'\in A^{k-1}(M)$ such that $F^{k,1}(X\wedge X', Y)\neq 0$.  Then
\begin{align}
\nonumber
\nabla_{X\wedge X'}Y&=\widetilde{\nabla}_{X\wedge X'}Y+F^{k,1}(X'',Y)\\
\nonumber
&=X\wedge \widetilde{\nabla}_{X'}Y+(-1)^{k-1}X'\wedge \widetilde{\nabla}_XY+F^{k,1}(X\wedge X',Y)\\
\label{Case1AFkl}
&=X\wedge \nabla_{X'}Y+(-1)^{k-1}X'\wedge \nabla_XY+F^{k,1}(X\wedge X',Y),
\end{align}
where the third equality follows from the fact that $F^{i,1}=0$ for $i<k$.  On the other hand, since $\nabla$ is an induced higher connection, we also have
\begin{equation}
\label{Case1BFkl}
\nabla_{X\wedge X'}Y=X\wedge \nabla_{X'}Y+(-1)^{k-1}X'\wedge \nabla_XY.
\end{equation}
Comparing (\ref{Case1AFkl}) and (\ref{Case1BFkl}) shows that $F^{k,1}(X\wedge X',Y)=0$, which is a contradiction.  Hence, $F^{k,1}=0$ for all $k$.  

For case 2, let $l:=\min\{j~|~F^{1,j}\neq 0\}$.  Since $F^{1,l}\neq 0$, there exists $X,Y\in A^1(M)$ and $Y'\in A^{l-1}(M)$ such that $F^{1,l}(X,Y\wedge Y')\neq 0$.  Then
\begin{align}
\nonumber
\nabla_X (Y\wedge Y')&=\widetilde{\nabla}_X(Y\wedge Y')+F^{1,l}(X,Y\wedge Y')\\
\nonumber
&=(\widetilde{\nabla}_X Y)\wedge Y'+Y\wedge \widetilde{\nabla}_X Y'+F^{1,l}(X,Y\wedge Y')\\
\label{Case2AFkl}
&=(\nabla_X Y)\wedge Y'+Y\wedge \nabla_X Y'+F^{1,l}(X,Y\wedge Y'),
\end{align}
where the the third equaltiy follows from the fact that $F^{1,j}=0$ for $j<l$.  Since $\nabla$ is induced, we also have
\begin{equation}
\label{Case2BFkl}
\nabla_X (Y\wedge Y')=(\nabla_X Y)\wedge Y'+Y\wedge \nabla_X Y'.
\end{equation}
Comparing (\ref{Case2AFkl}) with (\ref{Case2BFkl}) shows that $F^{1,l}(X,Y\wedge Y')=0$, which is a contradiction.  Hence, $F^{1,l}=0$ for all $l$.

For case 3, there exists $a,b>1$ such that $F^{a,b}\neq 0$.  Let $k:=\min\{i~|~F^{i,b}\neq 0\}$.  (Note that by hypothesis, we have $k>1$.).  Since $F^{k,b}\neq 0$, there exists $X\in A^1(M)$, $X'\in A^{k-1}(M)$, and $Y\in A^b(M)$ such that $F^{k.b}(X\wedge X',Y)\neq 0$.  At this point, the rest of the proof proceeds exactly as in case 1 leading to the contradiction that $F^{k,b}(X\wedge X',Y)=0$.  From this, we conclude that $F^{i,j}=0$ for all $i,j$.  

$(\Leftarrow)$ If $F^{k,l}=0$ for all $k,l$, then Theorem \ref{HigherConnectionData} gives $\nabla=\widetilde{\nabla}$ and the latter is an induced higher connection.
\end{proof}

\begin{definition}
Let $\nabla$ be a higher connection and let $(\widetilde{\nabla},\{F^{k,l}\})$ be the unique pair associated to $\nabla$ by Theorem \ref{HigherConnectionData}.  The tensor fields $F^{k,l}$ are called the \textit{twist fields} of the connection.
\end{definition}

\noindent Theorem \ref{InducedResultA} shows that any induced higher connection (i.e., one satisfying \textit{both} (\ref{InducedHC1}) and (\ref{InducedHC2})) must have vanishing twist fields.  This fact motivates the following two definitions:  
\begin{definition}
\label{UpperInduced}
A higher connection $\nabla$ is called \textit{upper induced} if it satisfies 
\begin{equation}
\nonumber
\nabla_X (Y\wedge Z) = (\nabla_XY)\wedge Z+(-1)^{(k-1)l}Y\wedge \nabla_X Z
\end{equation}
for $X\in A^k(M)$, $Y\in A^l(M)$ and $Z\in \mc{A}(M)$.
\end{definition}
\begin{definition}
\label{LowerInduced}
A higher connection $\nabla$ is called \textit{lower induced} if it satisfies
\begin{equation}
\nonumber
\nabla_{X\wedge Y} Z=X\wedge \nabla_Y Z+ (-1)^{kl} Y\wedge \nabla_X Z,
\end{equation}
for $X\in A^k(M)$, $Y\in A^l(M)$, and $Z\in \mc{A}(M)$.
\end{definition}
\noindent The notion of upper and lower induced puts the following restrictions on the twist fields:
\begin{proposition}
\label{UpperInducedProp}
Let $\nabla$ be a higher connection and let $(\widetilde{\nabla},\{F^{k,l}\})$ be the unique pair associated to $\nabla$ by Theorem \ref{HigherConnectionData}.  $\nabla$ is upper induced iff 
\begin{equation}
\label{Special0}
F^{k,l+m}(X,Y\wedge Z)=F^{k,l}(X,Y)\wedge Z+(-1)^{(k-1)l}Y\wedge F^{k,m}(X,Z)
\end{equation}
for $X\in A^k(X)$, $Y\in A^l(M)$, and $Z\in {A}^m(M)$, with $k,l,m>0$ and $k+l+m-1\le n:=\dim M$.  In particular, if $\nabla$ is upper induced, then the twist fields are completely determined by the set $\{F^{k,1}\}_{k=1}^n$.  For a decomposable $l$-vector field $Y=Y_1\wedge Y_2\wedge \cdots \wedge Y_l$, $F^{k,l}$ is given by
\begin{equation}
\label{Special4}
F^{k,l}(X,Y)=\sum_{j=1}^l(-1)^{j-1}F^{k,1}(X,Y_j)\wedge Y_1\wedge \cdots \wedge \widehat{Y_j}\wedge \cdots \wedge Y_l.
\end{equation}
In particular, $F^{1,l}\equiv 0$ for all $l$.  
\end{proposition}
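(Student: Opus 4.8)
The plan is to prove the equivalence by unwinding the definition of "upper induced" in terms of the decomposition $\nabla = \widetilde{\nabla} + F$ from Theorem \ref{HigherConnectionData}, observing that $\widetilde{\nabla}$ already satisfies the Leibniz identity \eqref{InducedHC2} (it is an induced higher connection, by construction in Proposition \ref{ExistenceHC}), so the upper-induced condition on $\nabla$ is equivalent to the \emph{same} identity holding for the remainder $F$. Concretely, for $X\in A^k(M)$, $Y\in A^l(M)$, $Z\in A^m(M)$, substitute $\nabla_W V = \widetilde{\nabla}_W V + F(W,V)$ into both sides of $\nabla_X(Y\wedge Z) = (\nabla_X Y)\wedge Z + (-1)^{(k-1)l} Y\wedge \nabla_X Z$; the $\widetilde{\nabla}$-terms cancel by \eqref{InducedHC2} applied to $\widetilde{\nabla}$, leaving precisely \eqref{Special0}. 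This handles both directions of the first "iff" at once, since the substitution is a reversible algebraic manipulation; the only thing to check is that one may restrict attention to $Y,Z$ of pure degree and to the range $k+l+m-1\le n$ (outside that range everything is zero by the convention $F^{k,l}\equiv 0$, so the identity holds trivially), and that by $C^\infty(M)$-bilinearity of the $F^{k,l}$ (Lemma \ref{DifferenceHC}) it suffices to check the tensorial identity \eqref{Special0} pointwise, which is automatic once it holds for all MVFs.

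Next I would derive \eqref{Special4} from \eqref{Special0} by induction on $l$. The base case $l=1$ is vacuous. For the inductive step, write a decomposable $Y = Y_1\wedge Y'$ with $Y' = Y_2\wedge\cdots\wedge Y_l\in A^{l-1}(M)$, apply \eqref{Special0} with the roles $Y\rightsquigarrow Y_1$ (degree $1$) and $Z\rightsquigarrow Y'$ (degree $l-1$):
\begin{equation}
\nonumber
F^{k,l}(X, Y_1\wedge Y') = F^{k,1}(X,Y_1)\wedge Y' + (-1)^{k-1} Y_1\wedge F^{k,l-1}(X,Y'),
\end{equation}
then expand $F^{k,l-1}(X,Y')$ by the inductive hypothesis and collect signs. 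Moving $Y_1$ past the leading term $F^{k,1}(X,Y_j)\wedge Y_2\wedge\cdots\wedge\widehat{Y_j}\wedge\cdots\wedge Y_l$ (which has degree $(k)+(l-2)$ in the relevant bookkeeping, but for the sign it is the $\wedge$-degree $l-2$ of the $Y$-part that matters together with the $(k-1)$ coming from \eqref{Special0}) produces exactly the sign $(-1)^{j-1}$ with $Y_1$ reinserted in the first slot, yielding the stated formula for $F^{k,l}$. Since both sides of \eqref{Special4} are $C^\infty(M)$-multilinear and alternating in $Y_1,\dots,Y_l$ (the left side by Lemma \ref{DifferenceHC}, the right side by inspection), and every $l$-vector field is locally a finite sum of decomposables, this determines $F^{k,l}$ globally in terms of $F^{k,1}$. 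The claim that $F^{1,l}\equiv 0$ for all $l$ then follows from \eqref{Special4} together with $F^{1,1}\equiv 0$ (Theorem \ref{HigherConnectionData}): every summand on the right contains the factor $F^{1,1}(X,Y_j)=0$.

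The main obstacle I anticipate is purely bookkeeping: getting the signs in the inductive step for \eqref{Special4} to line up. One must be careful that the "$\wedge$-degree" governing Koszul signs (which is the genuine exterior degree) is \emph{not} the same as the grading $p\mapsto p-1$ used for the Schouten bracket, and that \eqref{Special0} itself carries the exponent $(k-1)l$ rather than $kl$; propagating these through the reindexing $Y_1\wedge(\cdots\widehat{Y_j}\cdots) \mapsto (\cdots\widehat{Y_j}\cdots)$ with $Y_1$ reinserted requires moving a degree-$1$ factor past a degree-$(l-2)$ factor, contributing $(-1)^{l-2}=(-1)^l$, which must combine with the $(-1)^{k-1}$ from \eqref{Special0} and the inductive sign $(-1)^{j-2}$ to give the claimed $(-1)^{j-1}$. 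I would verify this carefully once in the write-up; everything else is routine substitution and the reversibility of the $\widetilde{\nabla}+F$ decomposition.
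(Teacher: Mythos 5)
Your proposal follows essentially the same route as the paper: decompose $\nabla=\widetilde{\nabla}+F$, cancel the $\widetilde{\nabla}$-terms using the fact that the induced extension of $\widetilde{\nabla}$ satisfies (\ref{InducedHC2}), and read off (\ref{Special0}) in both directions; the derivation of (\ref{Special4}) by induction on $l$ is exactly the ``straightforward calculation'' the paper omits, and $F^{1,l}\equiv 0$ follows as you say. One correction to the sign bookkeeping you flagged for later: in the inductive step the factor $Y_1$ must be moved past $F^{k,1}(X,Y_j)$, which lies in $A^{k}(M)$, so the commutation sign is $(-1)^{k}$ (not $(-1)^{l-2}$ from the $Y$-part); combined with the $(-1)^{k-1}$ from (\ref{Special0}) and the inductive sign $(-1)^{j-2}$ this gives $(-1)^{2k-1+j-2}=(-1)^{j-1}$, as required.
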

\begin{proof}
Let $X\in A^k(X)$, $Y\in A^l(M)$, and $Z\in {A}^m(M)$, with $k+l+m-1\le n:=\dim M$.  By Theorem \ref{HigherConnectionData}, we have
\begin{align}
\nonumber
\nabla_X(Y\wedge Z)&=\widetilde{\nabla}_X(Y\wedge Z)+F^{k,l+m}(X,Y\wedge Z)\\
\label{Special1}
&=(\widetilde{\nabla}_XY)\wedge Z+(-1)^{(k-1)l}Y\wedge \widetilde{\nabla}_XZ+F^{k,l+m}(X,Y\wedge Z),
\end{align}
where the last equality follows from the fact that $\widetilde{\nabla}$ is induced.  Now suppose that $\nabla$ is upper induced.  Then 
\begin{align}
\nonumber
\nabla_X(Y\wedge Z)&=(\nabla_X Y)\wedge Z+(-1)^{(k-1)l}Y\wedge\nabla_X Z\\
\label{Special2}
&=(\widetilde{\nabla}_X Y)\wedge Z+F^{k,l}(X,Y)\wedge Z\\
\nonumber
&+(-1)^{(k-1)l}Y\wedge \widetilde{\nabla}_XZ+(-1)^{(k-1)l}Y\wedge F^{k,m}(X,Z).
\end{align}
Comparing (\ref{Special1}) and (\ref{Special2}) gives
\begin{equation}
\label{Special3}
F^{k,l+m}(X,Y\wedge Z)=F^{k,l}(X,Y)\wedge Z+(-1)^{(k-1)l}Y\wedge F^{k,m}(X,Z).
\end{equation}
On the other hand, if the twist fields satisfy (\ref{Special3}), then it follows that $\nabla$ is upper induced; this can be easily seen by substituting (\ref{Special3}) into (\ref{Special1}), rearranging the terms, and applying Theorem \ref{HigherConnectionData}.  For the last part, note that (\ref{Special4}) follows from  (\ref{Special3}) by a straightforward calculation, and $F^{1,l}\equiv 0$ since $F^{1,1}\equiv 0$ by Theorem \ref{HigherConnectionData}.  
\end{proof}

\begin{proposition}
\label{LowerInducedProp}
Let $\nabla$ be a higher connection and let $(\widetilde{\nabla},\{F^{k,l}\})$ be the unique pair associated to $\nabla$ by Theorem \ref{HigherConnectionData}.  $\nabla$ is lower induced iff
\begin{equation}
\nonumber
F^{k+l,m}(X\wedge Y,Z)=X\wedge F^{l,m}(Y,Z)+(-1)^{kl}Y\wedge F^{k,m}(X,Z),
\end{equation}
for $X\in A^k(M)$, $Y\in A^l(M)$, and $Z\in A^m(M)$, with $k+l+m-1\le n:=\dim M$.  In particular, if $\nabla$ is lower induced, then the twist fields are completely determined by the set $\{F^{1,l}\}_{l=1}^n$.  For a decomposable $k$-vector field $X=X_1\wedge \cdots \wedge X_k$, $F^{k,l}$ is given by
\begin{equation}
\label{Lower0}
F^{k,l}(X,Y)=\sum_{j=1}^k (-1)^{k-j} X_1\wedge \cdots \wedge \widehat{X_j}\wedge \cdots \wedge X_k\wedge F^{1,l}(X_j,Y).
\end{equation}
In particular, $F^{k,1}\equiv 0$ for all $k$.
\end{proposition}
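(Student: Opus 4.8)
The plan is to prove Proposition \ref{LowerInducedProp} by mirroring the proof of Proposition \ref{UpperInducedProp}, interchanging the roles of the two arguments of $\nabla$. The key input is Theorem \ref{HigherConnectionData}, which writes $\nabla_X Y = \widetilde{\nabla}_X Y + F^{k,l}(X,Y)$, together with the fact that the induced higher connection $\widetilde{\nabla}$ satisfies (\ref{InducedHC1}). Fixing $X\in A^k(M)$, $Y\in A^l(M)$, $Z\in A^m(M)$ with $k+l+m-1\le n$, I would first expand $\nabla_{X\wedge Y}Z$ via Theorem \ref{HigherConnectionData} and then use that $\widetilde{\nabla}$ is lower induced to rewrite $\widetilde{\nabla}_{X\wedge Y}Z = X\wedge\widetilde{\nabla}_Y Z + (-1)^{kl}Y\wedge\widetilde{\nabla}_X Z$, obtaining
\begin{equation}
\nonumber
\nabla_{X\wedge Y}Z = X\wedge\widetilde{\nabla}_Y Z + (-1)^{kl}Y\wedge\widetilde{\nabla}_X Z + F^{k+l,m}(X\wedge Y,Z).
\end{equation}

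For the forward implication, I would assume $\nabla$ is lower induced, expand the right-hand side of $\nabla_{X\wedge Y}Z = X\wedge\nabla_Y Z + (-1)^{kl}Y\wedge\nabla_X Z$ again using Theorem \ref{HigherConnectionData} (so that $\nabla_Y Z = \widetilde{\nabla}_Y Z + F^{l,m}(Y,Z)$ and $\nabla_X Z = \widetilde{\nabla}_X Z + F^{k,m}(X,Z)$), and compare the two resulting expressions for $\nabla_{X\wedge Y}Z$. The $\widetilde{\nabla}$ terms cancel, leaving precisely the claimed identity for the twist fields. For the converse, I would substitute that identity back into the displayed formula above, regroup the terms as $X\wedge(\widetilde{\nabla}_Y Z + F^{l,m}(Y,Z)) + (-1)^{kl}Y\wedge(\widetilde{\nabla}_X Z + F^{k,m}(X,Z))$, and invoke Theorem \ref{HigherConnectionData} once more to recognize this as $X\wedge\nabla_Y Z + (-1)^{kl}Y\wedge\nabla_X Z$, i.e., $\nabla$ is lower induced.

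For the ``in particular'' statements, I would take $X = X_1\wedge\cdots\wedge X_k$ decomposable and induct on $k$: writing $X = X_1\wedge X'$ with $X' := X_2\wedge\cdots\wedge X_k\in A^{k-1}(M)$ and applying the twist-field identity with the first slot in degree $1$ gives $F^{k,l}(X,Y) = X_1\wedge F^{k-1,l}(X',Y) + (-1)^{k-1}X'\wedge F^{1,l}(X_1,Y)$. The induction hypothesis applied to $F^{k-1,l}(X',Y)$ then yields exactly (\ref{Lower0}), with the second term supplying the $j=1$ summand and the first term supplying the summands $j=2,\dots,k$ after reindexing; the base case $k=1$ is trivial. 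Setting $l=1$ in (\ref{Lower0}) and using $F^{1,1}\equiv 0$ (Theorem \ref{HigherConnectionData}) then forces $F^{k,1}\equiv 0$ for all $k$.

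The argument is essentially a transcription of the proof of Proposition \ref{UpperInducedProp}, so no serious obstacle is expected; the only points that need attention are checking that the degree bound $k+l+m-1\le n$ is respected at every application of the twist-field identity (so that all the twist fields occurring are actually defined), and the routine sign bookkeeping in the inductive reindexing. Both are minor.
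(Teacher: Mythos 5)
Your proposal is correct and follows essentially the same route as the paper: expand $\nabla_{X\wedge Y}Z$ via Theorem \ref{HigherConnectionData}, use that $\widetilde{\nabla}$ is induced, and compare with the lower-induced expansion of $\nabla$ to isolate the twist-field identity, with the converse obtained by substituting back. Your induction on $k$ for (\ref{Lower0}) correctly fills in the ``straightforward calculation'' the paper leaves implicit, with the right sign $(-1)^{k-1}$ in the recursion.
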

\begin{proof}
Let $X\in A^k(M)$, $Y\in A^l(M)$, and $Z\in A^m(M)$, with $k+l+m-1\le n:=\dim M$.  By Theorem \ref{HigherConnectionData}, we have
\begin{align}
\nonumber
\nabla_{X\wedge Y}Z&=\widetilde{\nabla}_{X\wedge Y}Z+F^{k+l,m}(X\wedge Y,Z)\\
\label{Lower1}
&=X\wedge \widetilde{\nabla}_Y Z+(-1)^{kl} Y\wedge\widetilde{\nabla}_X Z+F^{k+l,m}(X\wedge Y,Z),
\end{align}
where the last equality follows from the fact that $\widetilde{\nabla}$ is induced.  Now suppose that $\nabla$ is lower induced.  Then
\begin{align}
\nonumber
\nabla_{X\wedge Y} Z&=X\wedge \nabla_Y Z+(-1)^{kl} Y\wedge \nabla_X Z\\
\nonumber
&=X\wedge \widetilde{\nabla}_Y Z+X\wedge F^{l,m}(Y,Z)\\
\label{Lower2}
&+(-1)^{kl}Y\wedge \widetilde{\nabla}_X Z+(-1)^{kl}Y\wedge F^{k,m}(X,Z).
\end{align}
Comparing (\ref{Lower1}) and (\ref{Lower2}) gives 
\begin{equation}
\label{Lower3}
F^{k+l,m}(X\wedge Y,Z)=X\wedge F^{l,m}(Y,Z)+(-1)^{kl}Y\wedge F^{k,m}(X,Z).
\end{equation}

On the other hand, if the twist fields satisfy (\ref{Lower3}), then substitution of (\ref{Lower3}) into (\ref{Lower1}) shows that $\nabla$ is lower induced.   

For the last part, note that (\ref{Lower0}) follows from  (\ref{Lower3}) by a straightforward calculation and $F^{k,1}\equiv 0$ for all $k$ since $F^{1,1}\equiv 0$ by Theorem \ref{HigherConnectionData}.   
\end{proof}

\noindent We now introduce the notion of \textit{higher torsion} for higher connections:
\begin{definition}
\label{HigherTorsion}
Let $\nabla$ be a higher connection.  The \textit{higher torsion} associated to $\nabla$ is defined by
\begin{equation}
T(X,Y):=\nabla_XY-(-1)^{(k-1)(l-1)}\nabla_Y X-[X,Y]
\end{equation} 
for $X\in A^k(M)$, $Y\in A^l(M)$.  $\nabla$ is \textit{torsion-free} if $T\equiv 0$.  
\end{definition}

\begin{proposition}
\label{HigherTorsionProp}
Let $\nabla$ be a higher connection with higher torsion $T$.  For $X\in A^k(M)$, $Y\in A^l(M)$, and $f\in C^\infty(M)$, $T$ satisfies
\begin{itemize}
\item[(i)] $T(X,Y)=-(-1)^{(k-1)(l-1)}T(Y,X)$
\item[(ii)] $T(fX,Y)=fT(X,Y)$
\end{itemize}
\end{proposition}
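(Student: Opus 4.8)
The plan is to prove both identities by direct substitution of the definition of $T$, reducing everything to the graded antisymmetry of the Schouten--Nijenhuis bracket and to the Leibniz-type axioms recorded above. Since every operation appearing in Definition \ref{HigherTorsion} preserves the grading, it suffices to take $X\in A^k(M)$ and $Y\in A^l(M)$ homogeneous.

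For (i), I would expand $-(-1)^{(k-1)(l-1)}T(Y,X)$ via Definition \ref{HigherTorsion} into three terms. The first is $-(-1)^{(k-1)(l-1)}\nabla_Y X$, which already matches the middle term of $T(X,Y)$. The second carries the factor $(-1)^{(k-1)(l-1)}(-1)^{(l-1)(k-1)}=(-1)^{2(k-1)(l-1)}=1$ and hence equals $\nabla_X Y$. For the third, the graded antisymmetry $[Y,X]=-(-1)^{(k-1)(l-1)}[X,Y]$ from part (iii) of Definition \ref{SchoutenBracketMVF} turns $(-1)^{(k-1)(l-1)}[Y,X]$ into $-[X,Y]$. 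Adding the three pieces gives $\nabla_X Y-(-1)^{(k-1)(l-1)}\nabla_Y X-[X,Y]=T(X,Y)$.

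For (ii), substituting $fX$ into Definition \ref{HigherTorsion} and rewriting the three terms: axiom (ii) of Definition \ref{HCDef} gives $\nabla_{fX}Y=f\nabla_X Y$; axiom (v) of Definition \ref{HCDef} together with part (v) of Proposition \ref{InteriorProductProp} gives $\nabla_Y(fX)=[Y,f]\wedge X+f\nabla_Y X=(-1)^{l-1}(i_f Y)\wedge X+f\nabla_Y X$; and part (vi) of Proposition \ref{InteriorProductProp} gives $[fX,Y]=f[X,Y]-X\wedge i_f Y$. Regrouping, the three terms proportional to $f$ assemble into $fT(X,Y)$, and the leftover terms are $-(-1)^{(k-1)(l-1)+(l-1)}(i_f Y)\wedge X+X\wedge i_f Y$. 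Since $i_f Y\in A^{l-1}(M)$ and $X\in A^k(M)$, we have $(i_f Y)\wedge X=(-1)^{k(l-1)}X\wedge i_f Y$, so the sign on the first leftover term becomes $(-1)^{(k-1)(l-1)+(l-1)+k(l-1)}=(-1)^{2k(l-1)}=1$, the two leftover terms cancel, and $T(fX,Y)=fT(X,Y)$.

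The argument is purely computational; the only step needing attention is checking that the parity exponents collapse to even integers at the two indicated points, so I do not anticipate any genuine obstacle.
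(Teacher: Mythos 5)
Your proof is correct and follows essentially the same route as the paper's: part (i) by direct substitution using $s^2=1$ and the graded antisymmetry $[Y,X]=-(-1)^{(k-1)(l-1)}[X,Y]$, and part (ii) by combining axioms (ii) and (v) of Definition \ref{HCDef} with parts (v) and (vi) of Proposition \ref{InteriorProductProp} and checking that the two non-$f$-proportional terms cancel after commuting $(i_fY)\wedge X=(-1)^{k(l-1)}X\wedge i_fY$. The sign bookkeeping at both places you flagged checks out.
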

\begin{proof}
\noindent Let $s=(-1)^{(k-1)(l-1)}$.  For (i), we have
\begin{align}
\nonumber
T(X,Y)&:=\nabla_XY-s\nabla_Y X-[X,Y]\\
\nonumber
&=-s(\nabla_Y X - s\nabla_X Y + s[X,Y])\\
\nonumber
&=-s(\nabla_Y X - s\nabla_X Y + s(-s[Y,X])\\
\nonumber
&=-s(\nabla_Y X - s\nabla_X Y - [Y,X])\\
\nonumber
&=-sT(Y,X).
\end{align}

For (ii), we have
\begin{align}
\nonumber
T(fX,Y)&=\nabla_{fX}Y-s\nabla_Y(fX)-[fX,Y]\\
\nonumber
&=f\nabla_X Y-s([Y,f]\wedge X+f\nabla_Y X)-(f[X,Y]-X\wedge i_fY)\\
\nonumber
&=f\nabla_X Y-s((-1)^{l-1}i_f Y\wedge X+f\nabla_Y X)-(f[X,Y]-X\wedge i_fY)\\
\nonumber
&=f\nabla_X Y-(-1)^{k(l-1)}i_f Y\wedge X-sf\nabla_Y X-f[X,Y]+ (-1)^{k(l-1)} i_fY\wedge X\\
\nonumber
&=f\nabla_X Y-sf\nabla_Y X-f[X,Y]\\
\nonumber
&=fT(X,Y).
\end{align}
where we used Proposition \ref{InteriorProductProp}-(vi) and (v) in the second and third equalities respectively.
\end{proof}

\noindent The next two results provide a characterization of torsion-free higher connections.
\begin{proposition}
\label{VanishingTorsionProp}
Let $\widetilde{\nabla}$ be an affine connection on $TM$.  Then the following statements are equivalent:  
\begin{itemize}
\item[(i)] $\widetilde{\nabla}$ is torsion-free as an affine connection on $TM$.
\item[(ii)] $\widetilde{\nabla}$ is torsion-free as an induced higher connection.
\end{itemize}
\end{proposition}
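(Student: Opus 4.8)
The plan is to prove the two implications separately, and in each direction to reduce everything to the behavior of the connection on $1$-vector fields, where the two notions of torsion literally coincide. First I would spell out what ``torsion-free as an induced higher connection'' unwinds to: applying Definition \ref{HigherTorsion} with $X,Y\in A^1(M)$ gives $T(X,Y)=\widetilde{\nabla}_XY-\widetilde{\nabla}_YX-[X,Y]$, since $(-1)^{(k-1)(l-1)}=1$ when $k=l=1$ and $[X,Y]=L_XY$ is the usual Lie bracket of vector fields by Definition \ref{SchoutenBracketMVF}(ii). This is exactly the classical torsion tensor of the affine connection $\widetilde{\nabla}$ on $TM$. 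Hence the implication (ii)$\Rightarrow$(i) is immediate: if the higher torsion vanishes identically, then in particular it vanishes on $A^1(M)\times A^1(M)$, which says precisely that $\widetilde{\nabla}$ is torsion-free as an affine connection.

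For the converse (i)$\Rightarrow$(ii), I would argue that vanishing of the classical torsion forces the higher torsion to vanish on all of $\mc{A}(M)\times\mc{A}(M)$. The key structural facts are the explicit formulas (\ref{ExistenceHC1})--(\ref{ExistenceHC2}) (equivalently (\ref{ExplicitForm1})--(\ref{ExplicitForm2})) describing how the induced higher connection is built from $\widetilde{\nabla}$ on decomposables, together with the Leibniz rule Definition \ref{SchoutenBracketMVF}(iv) for the Schouten--Nijenhuis bracket. The strategy is a double induction on the degrees $k=\deg X$ and $l=\deg Y$. The base case $k=l=1$ is exactly statement (i). For the inductive step I would write $X=X'\wedge X_k$ with $X'$ decomposable of degree $k-1$ (and symmetrically handle $Y$), expand $\nabla_{X'\wedge X_k}Y$ using (\ref{ExistenceHC1}), expand $\nabla_Y(X'\wedge X_k)$ using (\ref{ExistenceHC2}), expand $[X'\wedge X_k,Y]$ using the derivation property of $ad$, and then check that the three expansions reassemble into lower-degree higher-torsion terms $T(X',Y)$, $T(X_k,Y)$ (and their flips), all of which vanish by the inductive hypothesis. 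One should also invoke Proposition \ref{HigherTorsionProp}(ii) and $C^\infty(M)$-bilinearity of $T$ (which follows since $T$ is the difference of the higher torsion and a genuinely tensorial correction — more simply, $T$ is $C^\infty(M)$-linear in each slot by Proposition \ref{HigherTorsionProp}(ii) and antisymmetry (i)) so that it suffices to verify vanishing on decomposable arguments.

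The main obstacle I anticipate is bookkeeping the sign factors: the Schouten bracket, the wedge permutations moving $X_k$ past $X'$ and past pieces of $Y$, and the exponents $(-1)^{(k-1)(l-1)}$, $(-1)^{kl}$, $(-1)^{(k-1)l}$ in (\ref{ExistenceHC1})--(\ref{ExistenceHC2}) and Definition \ref{SchoutenBracketMVF}(iii)--(iv) all have to conspire. A clean way to manage this is to prove the stronger ``derivation-type'' identity
\begin{equation}
\nonumber
T(X\wedge Y,Z)=X\wedge T(Y,Z)+(-1)^{kl}\,Y\wedge T(X,Z)
\end{equation}
for the induced higher connection (using Proposition \ref{UpperInducedProp}/\ref{LowerInducedProp}-style computations with all twist fields zero), and the symmetric version in the last slot; then torsion-freeness propagates from degree $1$ to all degrees by a one-line induction. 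I would set up that identity first and then deduce (i)$\Rightarrow$(ii) as a corollary, which isolates the sign computation into a single lemma rather than scattering it through the induction.
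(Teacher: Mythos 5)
Your proposal is correct, and the overall strategy --- tensoriality of $T$ reduces the claim to decomposable arguments, where everything collapses to the classical torsion of $\widetilde{\nabla}$ on the degree-one factors --- is the same as the paper's. The difference is purely in how that reduction is packaged. The paper unwinds the recursion completely: using (\ref{ExplicitForm1}) and (\ref{ExplicitForm2}) it writes $\widetilde{\nabla}_XY$, $\widetilde{\nabla}_YX$, and $[X,Y]$ for decomposable $X=X_1\wedge\cdots\wedge X_k$, $Y=Y_1\wedge\cdots\wedge Y_l$ as double sums of the form $\sum_{i,j}(-1)^{i+j}(\cdot)\wedge X[i]\wedge Y[j]$, so that $T(X,Y)$ becomes $\sum_{i,j}(-1)^{i+j}\bigl(\widetilde{\nabla}_{X_i}Y_j-\widetilde{\nabla}_{Y_j}X_i-[X_i,Y_j]\bigr)\wedge X[i]\wedge Y[j]$ in one step, with all signs handled once. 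You instead keep the recursion as an induction governed by the Leibniz-type identity $T(X\wedge Y,Z)=X\wedge T(Y,Z)+(-1)^{kl}\,Y\wedge T(X,Z)$; I checked the sign bookkeeping and this identity does hold for induced higher connections (the key cancellations are $(-1)^{(k+l-1)(m-1)+(m-1)k}=(-1)^{(m-1)(l-1)}$ for the middle terms and $[X\wedge Y,Z]=X\wedge[Y,Z]+(-1)^{kl}Y\wedge[X,Z]$ for the bracket terms), and together with the antisymmetry from Proposition \ref{HigherTorsionProp}(i) it suffices --- you do not even need a separate second-slot identity. Your version isolates the sign computation in a single reusable lemma and makes the inductive structure transparent; the paper's version is shorter because the explicit double-sum formulas are already available from the proof of Proposition \ref{ExistenceHC}. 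Both are valid; your one-slot Leibniz identity for $T$ is essentially the ``derivative'' of the paper's closed-form expression.
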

\begin{proof}
$(i) \Leftarrow (ii)$. Immediate.

$(i) \Rightarrow (ii)$. Suppose that $\widetilde{\nabla}$ is torsion-free as an affine connection on $TM$.  Extend $\widetilde{\nabla}$ to a higher connection via (\ref{InducedHC1}) and (\ref{InducedHC2}) and let $T$ denote its higher torsion.  To prove that $T\equiv 0$, it suffices to show that $T(X,Y)\equiv 0$ for the case when $X$ and $Y$ are decomposable $k$ and $l$-vector fields respectively.  So, let $X=X_1\wedge \cdots \wedge X_k$ and $Y=Y_1\wedge \cdots \wedge Y_l$.  To simplify things, write
\begin{align}
\nonumber
X[i] &= X_1\wedge\cdots \wedge \widehat{X}_i\wedge \cdots \wedge X_k\\
Y[j] &= Y_1\wedge\cdots \wedge \widehat{Y}_j\wedge \cdots \wedge Y_l,
\end{align}
where $\widehat{X}_i$ and $\widehat{Y}_j$ denotes omission as usual.  Using (\ref{ExplicitForm1}) and (\ref{ExplicitForm2}) from the proof of Proposition \ref{InducedHC1}, we have
\begin{align}
\nonumber
\widetilde{\nabla}_XY&=\sum_{i=1}^k(-1)^{k-i}X[i]\wedge \widetilde{\nabla}_{X_i} Y\\
\nonumber
&=\sum_{i=1}^{k}\sum_{j=1}^l (-1)^{k-i}(-1)^{j-1}X[i]\wedge \widetilde{\nabla}_{X_i} Y_j\wedge Y[j]\\
\nonumber
&=\sum_{i=1}^{k}\sum_{j=1}^l (-1)^{i+j}\widetilde{\nabla}_{X_i} Y_j\wedge X[i]\wedge Y[j].
\end{align}
Likewise,
\begin{equation}
\nonumber
\widetilde{\nabla}_YX=\sum_{i=1}^{k}\sum_{j=1}^l (-1)^{i+j}\widetilde{\nabla}_{Y_j} X_i\wedge Y[j]\wedge X[i].
\end{equation}
Recall that for $X$, $Y$ decomposable, the Schouten-Nijenhuis bracket is given by
\begin{equation}
\nonumber
[X,Y]=\sum_{i=1}^k\sum_{j=1}^l(-1)^{i+j}[X_i,Y_j]\wedge X[i]\wedge Y[j].
\end{equation}
Putting everything together gives
\begin{align}
\nonumber
T(X,Y)&=\widetilde{\nabla}_XY-(-1)^{(k-1)(l-1)}\widetilde{\nabla}_YX-[X,Y]\\
\nonumber
&=\sum_{i=1}^{k}\sum_{j=1}^l (-1)^{i+j}(\widetilde{\nabla}_{X_i} Y_j-\widetilde{\nabla}_{Y_j} X_i-[X_i,Y_j])\wedge X[i]\wedge Y[j]\\
\nonumber
&=0.
\end{align}
This completes the proof.
\end{proof}
\begin{theorem}
\label{TorsionThmB}
Let $\nabla$ be a higher connection and let $(\widetilde{\nabla},\{F^{k,l}\})$ be the unique pair associated to $\nabla$ by Theorem \ref{HigherConnectionData}.  Then $\nabla$ is torsion-free iff 
\begin{itemize}
\item[(i)] $\widetilde{\nabla}$ is torsion-free, and 
\item[(ii)] $F^{k,l}(X,Y)=(-1)^{(k-1)(l-1)}F^{l,k}(Y,X)$ for all $X\in A^k(M)$, $Y\in A^l(M)$.  
\end{itemize}
\end{theorem}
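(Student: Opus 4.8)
The plan is to reduce the statement to a single algebraic identity comparing the higher torsion $T$ of $\nabla$ with the higher torsion of the induced connection $\widetilde{\nabla}$. Using the decomposition $\nabla_XY=\widetilde{\nabla}_XY+F^{k,l}(X,Y)$ supplied by Theorem \ref{HigherConnectionData} (with the convention that $F^{k,l}\equiv 0$ when $kl=0$ or $k+l-1>\dim M$), I would substitute directly into Definition \ref{HigherTorsion}. Since $\nabla_YX=\widetilde{\nabla}_YX+F^{l,k}(Y,X)$ and the Schouten--Nijenhuis term $[X,Y]$ is common to both $\nabla$ and $\widetilde{\nabla}$, it drops out of the difference and one obtains, for $X\in A^k(M)$ and $Y\in A^l(M)$,
\begin{equation}
\nonumber
T(X,Y)=\widetilde{T}(X,Y)+F^{k,l}(X,Y)-(-1)^{(k-1)(l-1)}F^{l,k}(Y,X),
\end{equation}
where $\widetilde{T}$ denotes the higher torsion of $\widetilde{\nabla}$ regarded as an induced higher connection. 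This identity is the crux of the proof.

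For the ($\Leftarrow$) direction I would assume (i) and (ii). By Proposition \ref{VanishingTorsionProp}, hypothesis (i) upgrades to $\widetilde{T}\equiv 0$, and hypothesis (ii) makes the last two terms in the displayed identity cancel; hence $T\equiv 0$. For the ($\Rightarrow$) direction, assume $T\equiv 0$. Restricting the identity to $X,Y\in A^1(M)$, where $k=l=1$, the sign is $+1$, and $F^{1,1}\equiv 0$ by Theorem \ref{HigherConnectionData}, gives $0=T(X,Y)=\widetilde{\nabla}_XY-\widetilde{\nabla}_YX-[X,Y]$, which is exactly the torsion of $\widetilde{\nabla}$ as an affine connection on $TM$; this proves (i). Proposition \ref{VanishingTorsionProp} then promotes (i) to $\widetilde{T}\equiv 0$ as a higher connection, and feeding this together with $T\equiv 0$ back into the displayed identity yields $F^{k,l}(X,Y)=(-1)^{(k-1)(l-1)}F^{l,k}(Y,X)$ for all $X\in A^k(M)$, $Y\in A^l(M)$ (the degenerate ranges being trivial by the convention on $F^{k,l}$), which is (ii).

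I do not expect a substantive obstacle once the decomposition identity is established; the points requiring care are purely bookkeeping: confirming that the bracket term is genuinely shared by $T$ and $\widetilde{T}$ so that it cancels in the difference, checking that the identity restricts correctly to $A^1(M)\times A^1(M)$ — where the vanishing of $F^{1,1}$ is precisely what collapses the higher-torsion condition to the ordinary torsion condition — and invoking Proposition \ref{VanishingTorsionProp} as the bridge between torsion-freeness of $\widetilde{\nabla}$ as an affine connection and as an induced higher connection.
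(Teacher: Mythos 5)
Your proposal is correct and follows essentially the same route as the paper: both derive the identity $T(X,Y)=\widetilde{T}(X,Y)+F^{k,l}(X,Y)-(-1)^{(k-1)(l-1)}F^{l,k}(Y,X)$ from the decomposition of Theorem \ref{HigherConnectionData}, restrict to $A^1(M)\times A^1(M)$ (where $F^{1,1}\equiv 0$) to recover ordinary torsion-freeness of $\widetilde{\nabla}$, and invoke Proposition \ref{VanishingTorsionProp} to upgrade this to $\widetilde{T}\equiv 0$ before reading off condition (ii). No gaps.
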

\begin{proof}
Let $X\in A^k(M)$ and $Y\in A^l(M)$ and let $T$ denote the higher torsion associated with $\nabla$. Note that if $k=0$ or $l=0$, then $T(X,Y)=0$.  Consequently, assume that $k,l>0$.  Then
\begin{align}
\nonumber
T(X,Y)&=\nabla_XY-(-1)^{(k-1)(l-1)}\nabla_Y X-[X,Y]\\
\nonumber
&=\widetilde{\nabla}_XY+F^{k,l}(X,Y)-(-1)^{(k-1)(l-1)}\widetilde{\nabla}_Y X-(-1)^{(k-1)(l-1)}F^{l,k}(Y,X)\\
\nonumber
&-[X,Y]\\
\label{TorsionThmB1}
&=\widetilde{T}(X,Y)+F^{k,l}(X,Y)-(-1)^{(k-1)(l-1)}F^{l,k}(Y,X),
\end{align}
where $\widetilde{T}$ denotes the higher torsion associated to $\widetilde{\nabla}$ (as an induced higher connection).  

Now suppose that $T\equiv 0$.  Since $F^{1,1}\equiv 0$, we have $\widetilde{T}(X,Y)=T(X,Y)=0$ for all $X,Y\in A^1(M)$.  Consequently, $\widetilde{\nabla}$ is torsion free as an affine connection on $TM$.   Proposition \ref{VanishingTorsionProp} then implies that $\widetilde{T}\equiv 0$.  This fact along with (\ref{TorsionThmB1}) implies that 
\begin{equation}
\nonumber
F^{k,l}(X,Y)=(-1)^{(k-1)(l-1)}F^{l,k}(Y,X).
\end{equation}
The converse follows immediately from (\ref{TorsionThmB1}).  This completes the proof.  
\end{proof}
\begin{corollary}
\label{TorsionGkl}
Let $\nabla$ be a higher connection and let $(\widetilde{\nabla},\{F^{k,l}\})$ be the unique pair associated to $\nabla$ by Theorem \ref{HigherConnectionData}.  Let 
\begin{equation}
\nonumber
G^{k,l}(X,Y):=F^{k,l}(X,Y)+(-1)^{(k-1)(l-1)}F^{l,k}(Y,X),
\end{equation}
for $X\in A^k(M)$, $Y\in A^l(M)$.  Let $\nabla'$ be the unique higher connection associated to $(\widetilde{\nabla},\{G^{k,l}\})$ by Theorem \ref{HigherConnectionData}.  If $\widetilde{\nabla}$ is torsion-free as an affine connection on $TM$, then $\nabla'$ has vanishing higher torsion.  
\end{corollary}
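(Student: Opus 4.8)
The plan is to verify, for the higher connection $\nabla'$, the two conditions characterizing torsion-freeness in Theorem \ref{TorsionThmB}. Before doing that, I would first check that $(\widetilde\nabla,\{G^{k,l}\})$ is an admissible pair for Theorem \ref{HigherConnectionData}, so that $\nabla'$ is actually defined: for fixed $k,l$ the map $(X,Y)\mapsto F^{l,k}(Y,X)$ is $C^\infty(M)$-bilinear from $A^k(M)\times A^l(M)$ to $A^{k+l-1}(M)$ (it is $F^{l,k}$ precomposed with the swap), hence a section of $E^{k,l}=\wedge^{k+l-1}TM\otimes\wedge^k T^\ast M\otimes\wedge^l T^\ast M$; therefore $G^{k,l}$, being a sum of two sections of $E^{k,l}$, is itself a section of $E^{k,l}$. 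Moreover $G^{1,1}(X,Y)=F^{1,1}(X,Y)+F^{1,1}(Y,X)=0$ because $F^{1,1}\equiv 0$ by Theorem \ref{HigherConnectionData}, so the normalization required by Theorem \ref{HigherConnectionData} holds. By construction, the affine-connection part of the pair associated to $\nabla'$ is exactly $\widetilde\nabla$.

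Next I would dispatch condition (i) of Theorem \ref{TorsionThmB}: $\widetilde\nabla$ is torsion-free as an affine connection on $TM$, which is precisely the hypothesis. For condition (ii), the required identity $G^{k,l}(X,Y)=(-1)^{(k-1)(l-1)}G^{l,k}(Y,X)$ is a one-line computation: for $X\in A^k(M)$, $Y\in A^l(M)$,
\begin{align*}
(-1)^{(k-1)(l-1)}G^{l,k}(Y,X)
&=(-1)^{(k-1)(l-1)}\Big(F^{l,k}(Y,X)+(-1)^{(l-1)(k-1)}F^{k,l}(X,Y)\Big)\\
&=(-1)^{(k-1)(l-1)}F^{l,k}(Y,X)+F^{k,l}(X,Y)\\
&=G^{k,l}(X,Y),
\end{align*}
where I used $(-1)^{2(k-1)(l-1)}=1$ together with the definition of $G^{k,l}$. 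With (i) and (ii) in hand, Theorem \ref{TorsionThmB} immediately yields that $\nabla'$ has vanishing higher torsion.

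Since every step is a direct substitution into an already-proved classification theorem, I do not expect a genuine obstacle; the only point that warrants a second look is the bookkeeping for degenerate index ranges (when $kl=0$ or $k+l-1>\dim M$), where, following the convention set in the remark after Theorem \ref{HigherConnectionData}, all the $F^{k,l}$ and hence all the $G^{k,l}$ vanish, so the symmetry identity and the admissibility check are trivially satisfied there as well.
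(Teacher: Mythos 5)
Your proof is correct and follows essentially the same route as the paper: apply Theorem \ref{TorsionThmB} after verifying the symmetry identity $G^{k,l}(X,Y)=(-1)^{(k-1)(l-1)}G^{l,k}(Y,X)$. The extra checks you include (that each $G^{k,l}$ is a genuine section of $E^{k,l}$ with $G^{1,1}\equiv 0$) are a welcome bit of diligence the paper leaves implicit, but they do not change the argument.
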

\begin{proof}
This follows immediately from Theorem \ref{TorsionThmB} by noting that 
\begin{equation}
\nonumber
G^{k,l}(X,Y)=(-1)^{(k-1)(l-1)} G^{l,k}(Y,X)
\end{equation}
for $X\in A^k(M)$, $Y\in A^l(M)$.
\end{proof}

\begin{corollary}
\label{UpperLowerTorsionCor}
Let $\nabla$ be an upper-induced (or lower-induced) higher connection.  If $\nabla$ is torsion-free, then $\nabla$ is induced by a torsion-free affine connection on $TM$.  
\end{corollary}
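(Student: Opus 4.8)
The plan is to combine the structural constraints on the twist fields coming from upper- (resp. lower-) inducedness with the symmetry constraint on the twist fields coming from torsion-freeness, and then invoke Theorem \ref{InducedResultA}. Throughout, let $(\widetilde{\nabla},\{F^{k,l}\})$ be the unique pair associated to $\nabla$ by Theorem \ref{HigherConnectionData}.

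First I would dispatch the upper-induced case. By Proposition \ref{UpperInducedProp}, the twist fields of an upper-induced connection are completely determined by the collection $\{F^{k,1}\}_{k=1}^n$, and moreover $F^{1,l}\equiv 0$ for all $l$. Now assume in addition that $\nabla$ is torsion-free. Theorem \ref{TorsionThmB} then gives that $\widetilde{\nabla}$ is torsion-free and that
\begin{equation}
\nonumber
F^{k,l}(X,Y)=(-1)^{(k-1)(l-1)}F^{l,k}(Y,X)
\end{equation}
for all $X\in A^k(M)$, $Y\in A^l(M)$. Specializing to $l=1$, the sign $(-1)^{(k-1)(l-1)}$ is $+1$, so $F^{k,1}(X,Y)=F^{1,k}(Y,X)=0$, the last equality because $\nabla$ is upper-induced. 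Hence $F^{k,1}\equiv 0$ for every $k$, and since these generating fields vanish, formula (\ref{Special4}) forces $F^{k,l}\equiv 0$ for all $k,l$. By Theorem \ref{InducedResultA}, $\nabla$ is induced, i.e. $\nabla=\widetilde{\nabla}$, and we have already noted $\widetilde{\nabla}$ is torsion-free.

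The lower-induced case is entirely parallel: by Proposition \ref{LowerInducedProp} the twist fields are determined by $\{F^{1,l}\}_{l=1}^n$ and $F^{k,1}\equiv 0$ for all $k$; torsion-freeness and Theorem \ref{TorsionThmB} give, specializing to $k=1$, that $F^{1,l}(X,Y)=F^{l,1}(Y,X)=0$, so all the generating fields vanish and hence (by (\ref{Lower0})) all $F^{k,l}$ vanish, and again Theorem \ref{InducedResultA} finishes the argument. I do not expect any genuine obstacle here: the proof is a short bookkeeping argument, the only point requiring mild care being the verification that the relevant sign $(-1)^{(k-1)(l-1)}$ is indeed $+1$ in the specializations $l=1$ (resp. $k=1$), which is immediate. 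The substantive content has already been established in Propositions \ref{UpperInducedProp}, \ref{LowerInducedProp} and Theorems \ref{TorsionThmB}, \ref{InducedResultA}.
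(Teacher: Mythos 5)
Your proof is correct and follows essentially the same route as the paper: use Theorem \ref{TorsionThmB} to get the symmetry $F^{k,l}(X,Y)=(-1)^{(k-1)(l-1)}F^{l,k}(Y,X)$, combine it with the vanishing of $F^{1,l}$ (resp. $F^{k,1}$) from Proposition \ref{UpperInducedProp} (resp. \ref{LowerInducedProp}) to kill the generating twist fields, and then conclude all twist fields vanish. The only cosmetic difference is that you cite formula (\ref{Special4}) and Theorem \ref{InducedResultA} explicitly where the paper simply reapplies Proposition \ref{UpperInducedProp}; the substance is identical.
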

\begin{proof}
Suppose that $\nabla$ is a torsion-free, upper-induced higher connection, and let $(\widetilde{\nabla},F^{k,l})$ be the unique pair associated to $\nabla$ by Theorem \ref{HigherConnectionData}.  By Theorem \ref{TorsionThmB},  we have
\begin{equation}
\label{ULTorsionCor1}
F^{k,l}(X,Y)=(-1)^{(k-1)(l-1)}F^{l,k}(Y,X)
\end{equation}
for all $X\in A^k(M)$ and $Y\in A^l(M)$.  In particular, $F^{k,1}(X,Y)=F^{1,k}(Y,X)$ for all $X\in A^k(M)$ and $Y\in A^1(M)$.  By Proposition \ref{UpperInducedProp}, $F^{1,k}\equiv 0$ for all $k$.   Equation (\ref{ULTorsionCor1}) (with $l=1$) then implies that $F^{k,1}\equiv 0$ for all $k$.  Applying Proposition \ref{UpperInducedProp} once more, we have $F^{k,l}\equiv 0$ for all $k,l$.  This shows that $\nabla=\widetilde{\nabla}$ and $\widetilde{\nabla}$ is torsion-free by Theorem \ref{TorsionThmB}.  The proof for the case when $\nabla$ is torsion-free and lower induced is similar.  
\end{proof}

\noindent We conclude this section with the following observation:
\begin{proposition}
\label{FlatnessProp}
Let $\nabla$ be a higher connection and let $(\widetilde{\nabla},\{F^{k,l}\})$ be the unique pair associated with $\nabla$ by Theorem \ref{HigherConnectionData}.  Then the following statements are equivalent:
\begin{itemize}
\item[(i)] $\widetilde{\nabla}$ is flat as an affine connection on $TM$.
\item[(ii)] For any $p\in M$ and any $y\in \wedge^l T_pM$, $l>0$, there exists a neighborhood $U\subset M$ of $p$ and a $Y\in A^l(U)$ such that $Y_p=y$ and $\nabla_XY=F^{k,l}(X,Y)$ for all $X\in A^k(U)$, $k>0$. 
\end{itemize}
\end{proposition}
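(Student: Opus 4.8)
The plan is to reduce condition (ii) to a statement purely about the affine connection $\widetilde{\nabla}$ on $TM$ and then to recognize the latter as a standard characterization of flatness. First, note that by Theorem \ref{HigherConnectionData} we have $\nabla_XY=\widetilde{\nabla}_XY+F^{k,l}(X,Y)$ for $X\in A^k(U)$, $Y\in A^l(U)$, so the equation $\nabla_XY=F^{k,l}(X,Y)$ appearing in (ii) is equivalent to $\widetilde{\nabla}_XY=0$. I would then show that, for a fixed $Y\in A^l(U)$, the condition ``$\widetilde{\nabla}_XY=0$ for all $X\in A^k(U)$ and all $k>0$'' is equivalent to ``$\widetilde{\nabla}_ZY=0$ for all $Z\in A^1(U)$'': one implication is trivial, and for the other I would use that $\widetilde{\nabla}$ is $C^\infty(U)$-linear in its first argument (axiom (ii) of Definition \ref{HCDef}) together with formula (\ref{ExplicitForm1}) (valid for $\widetilde{\nabla}$ because $\widetilde{\nabla}$ is induced), which expresses $\widetilde{\nabla}_XY$, for a decomposable $X=X_1\wedge\cdots\wedge X_k$, as a $\wedge$-combination of the $\widetilde{\nabla}_{X_j}Y$; one then passes to general $X$ by locality and $C^\infty$-linearity. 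Since formula (\ref{ExplicitForm2}) with a $1$-vector field in the first slot shows that $\widetilde{\nabla}|_{A^1(U)\times A^l(U)}$ is precisely the Leibniz extension of $\widetilde{\nabla}$ to $\wedge^l TM$, condition (ii) becomes: for every $p\in M$, every $l>0$, and every $y\in\wedge^l T_pM$, there is a neighborhood $U\ni p$ and a $\widetilde{\nabla}$-parallel section $Y\in A^l(U)$ with $Y_p=y$.

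For $(i)\Rightarrow(ii)$ I would invoke the classical fact that a flat affine connection admits, on a neighborhood $U$ of any point $p$, a parallel frame $e_1,\dots,e_n$ of $TM$ (obtained, e.g., from integrability of the horizontal distribution on the frame bundle, or by coordinatewise parallel transport, with consistency guaranteed by vanishing curvature). Since the induced connection on $\wedge^l TM$ differentiates a wedge of sections by the Leibniz rule, the products $e_{i_1}\wedge\cdots\wedge e_{i_l}$ with $i_1<\cdots<i_l$ then form a $\widetilde{\nabla}$-parallel frame of $\wedge^l TM$ on $U$. Writing a given $y\in\wedge^l T_pM$ in this frame with constant coefficients and taking the corresponding constant-coefficient section $Y\in A^l(U)$ yields a parallel $Y$ with $Y_p=y$; by the reduction above this gives $\widetilde{\nabla}_XY=0$ for all $X$, i.e. $\nabla_XY=F^{k,l}(X,Y)$, which is (ii).

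For $(ii)\Rightarrow(i)$ I would apply (ii) only in the case $l=1$. Fixing $p\in M$, I would pick a basis $y_1,\dots,y_n$ of $T_pM$, use (ii) to extend each $y_i$ to a $\widetilde{\nabla}$-parallel vector field $Y_i$ near $p$, and pass to a common neighborhood $U$ on which $\{Y_i\}$ remains a frame. Then $\widetilde{\nabla}_ZY_i=0$ for every vector field $Z$ on $U$, so the curvature satisfies $R(Z,W)Y_i=\widetilde{\nabla}_Z\widetilde{\nabla}_WY_i-\widetilde{\nabla}_W\widetilde{\nabla}_ZY_i-\widetilde{\nabla}_{[Z,W]}Y_i=0$ for all vector fields $Z,W$ on $U$; since $\{Y_i\}$ is a frame and $R$ is tensorial, $R\equiv0$ on $U$, and since $p$ was arbitrary, $\widetilde{\nabla}$ is flat.

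The only genuinely non-formal ingredient is the standard fact that a flat connection has local parallel frames; everything else is bookkeeping with the decomposition of Theorem \ref{HigherConnectionData} and with the induced-connection formulas (\ref{ExplicitForm1}) and (\ref{ExplicitForm2}). The step I expect to require the most care is the reduction of the family of conditions indexed by $k$ to the single condition at $k=1$, since it must invoke (\ref{ExplicitForm1}) for $\widetilde{\nabla}$ along with locality and $C^\infty$-linearity in the first argument; once that is settled, the equivalence with flatness is essentially immediate.
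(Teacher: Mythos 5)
Your proposal is correct and follows essentially the same route as the paper: reduce $\nabla_XY=F^{k,l}(X,Y)$ to $\widetilde{\nabla}_XY=0$ via Theorem \ref{HigherConnectionData}, use the induced-connection formulas (\ref{ExplicitForm1})--(\ref{ExplicitForm2}) to collapse the conditions over all $k$ to the $k=1$ case, and invoke the local-parallel-section characterization of flatness (for the converse the paper likewise specializes to $k=l=1$ and uses $F^{1,1}\equiv 0$). The only cosmetic difference is that you produce general parallel $l$-vector fields from a parallel frame of $TM$ and its induced frame of $\wedge^l TM$, whereas the paper first treats decomposable $y$ by wedging parallel extensions of its factors and then sums over a decomposition of a general $y$; both are equally valid bookkeeping.
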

\begin{proof}
(i)$\Rightarrow$(ii).  Let $\widetilde{\nabla}$ be flat as an affine connection on $TM$ and let $p\in M$.  To start, consider the case when $y\in \wedge^l T_pM$ is a decomposable $l$-vector.  Then $y=y_1\wedge \cdots \wedge y_l$ for some $y_j\in T_pM$, $j=1,\dots, l$.  Since $\widetilde{\nabla}$ is flat, there exists a neighborhood $U_i$ of $p$ and a $Y_j\in A^1(U_j)$ such that $(Y_j)_p=y_j$ and $\widetilde{\nabla}_X Y_j=0$ on $U_j$ for all $X\in A^1(U_j)$, $j=1,\dots, l$.  Let $U=U_1\cap \cdots \cap U_l$, $Y=Y_1\wedge \cdots \wedge Y_l$, and let $X=X_1\wedge \cdots \wedge X_k$ be any decomposable $k$-vector field on $U$, $k>0$.  Since $\widetilde{\nabla}$ is induced, we have 
\begin{align}
\label{FlatnessProp1}
\widetilde{\nabla}_X Y&=\sum_{i=1}^k \sum_{j=1}^l (-1)^{k-i} (-1)^{j-1}X[i]\wedge \widetilde{\nabla}_{X_i}Y_j \wedge Y[j]=0,
\end{align}
where $X[i]=X_1\wedge \cdots \wedge \widehat{X_i}\wedge \cdots \wedge X_k$ and $Y[j]=Y_1\wedge \cdots \wedge \widehat{Y_j}\wedge \cdots \wedge Y_l$.  By linearity, (\ref{FlatnessProp1}) holds for all $X\in A^k(U)$.  

Now let $y$ be any $l$-vector at $p$.   Then $y$ is a sum of decomposable $l$-vectors: $y=y_{(1)}+\cdots  + y_{(t)}$, where $y_{(a)}$ is a decomposable $l$-vector for $a=1,\dots, t$.  By the above argument, there exists a neighborhood $U_{(a)}$ of $p$ and a decomposable $l$-vector field $Y_{(a)}$ on $U_{(a)}$ such that $(Y_{(a)})_p=y_{(a)}$ and $\widetilde{\nabla}_X Y_{(a)}=0$ for all $X\in A^k(U_{(a)})$, $a=1,\dots, t$.  Now set $U=U_{(1)}\cap \cdots \cap U_{(a)}$ and $Y=Y_{(1)}+\cdots +Y_{(t)}$.  Then $Y_p=y$ and $\widetilde{\nabla}_XY=0$ for all $X\in A^k(U)$, $k>0$.  This fact along with Theorem \ref{HigherConnectionData} gives 
\begin{equation}
\nonumber
\nabla_X Y=\widetilde{\nabla}_X Y +F^{k,l}(X,Y)=F^{k,l}(X,Y),\hspace*{0.1in}\forall X\in A^k(U),~k>0.
\end{equation}

(i)$\Leftarrow$(ii).  Let $\nabla$ be a higher connection satisfying (ii).  From Theorem \ref{HigherConnectionData}, $\nabla$ and $\widetilde{\nabla}$ (as an induced higher connection), coincide on $A^1(M)\times A^1(M)$ (i.e., $F^{1,1}\equiv 0$).  Consequently, for the special case of $k=l=1$, (ii) implies that for any $p\in M$ and any $y\in T_pM$, there exists a neighborhood $U$ of $p$ and a $Y\in A^1(U)$ such that $Y_p=y$ and $\widetilde{\nabla}_X Y=0$ on $U$ for all $X\in A^1(U)$.  In other words, $\widetilde{\nabla}|_{A^1(M)\times A^1(M)}$ is a flat affine connection.  This completes the proof.
\end{proof}
\begin{corollary}
Let $\widetilde{\nabla}$ be a flat affine connection on $TM$ and extend $\widetilde{\nabla}$ to an induced higher connection.  Then for any $p\in M$ and any $y\in \wedge^l T_pM$, $l>0$, there exists a neighborhood $U\subset M$ of $p$ and a $Y\in A^l(U)$ such that $Y_p=y$ and  $\widetilde{\nabla}_XY=0$ for all $X\in A^k(U)$, $k>0$.
\end{corollary}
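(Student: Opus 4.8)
The plan is to obtain this as an immediate specialization of Proposition \ref{FlatnessProp}. First I would set $\nabla := \widetilde{\nabla}$, regarded as the induced higher connection obtained by extending the flat affine connection $\widetilde{\nabla}$ via (\ref{InducedHC1}) and (\ref{InducedHC2}). By Theorem \ref{InducedResultA}, an induced higher connection has all twist fields vanishing identically; equivalently, the pair associated to $\widetilde{\nabla}$ by Theorem \ref{HigherConnectionData} is $(\widetilde{\nabla},\{0\})$, that is, $F^{k,l}\equiv 0$ for all $k,l$.

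Next, since $\widetilde{\nabla}$ is flat as an affine connection on $TM$ by hypothesis, condition (i) of Proposition \ref{FlatnessProp} is satisfied. Hence condition (ii) holds as well: for any $p\in M$ and any $y\in \wedge^l T_pM$ with $l>0$, there exist a neighborhood $U$ of $p$ and a $Y\in A^l(U)$ with $Y_p=y$ such that $\nabla_X Y=F^{k,l}(X,Y)$ for all $X\in A^k(U)$, $k>0$. Substituting $\nabla=\widetilde{\nabla}$ and $F^{k,l}\equiv 0$ yields $\widetilde{\nabla}_X Y=0$ for all $X\in A^k(U)$, $k>0$, which is exactly the assertion of the corollary.

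Since the statement is a literal corollary of Proposition \ref{FlatnessProp}, there is no real obstacle; the only point requiring attention is the identification of the higher connection $\widetilde{\nabla}$ (extended via (\ref{InducedHC1})--(\ref{InducedHC2})) with the one whose Theorem \ref{HigherConnectionData} data is $(\widetilde{\nabla},\{0\})$, which follows from the converse direction of Theorem \ref{HigherConnectionData} together with Theorem \ref{InducedResultA}. As an alternative, one could give a direct argument paralleling the $(i)\Rightarrow(ii)$ implication of Proposition \ref{FlatnessProp}: first take $y=y_1\wedge\cdots\wedge y_l$ decomposable, use flatness of $\widetilde{\nabla}$ on $TM$ to find parallel $1$-vector fields $Y_j$ near $p$ with $(Y_j)_p=y_j$, put $Y=Y_1\wedge\cdots\wedge Y_l$, and invoke the induced-connection formula (\ref{ExplicitForm2}) to get $\widetilde{\nabla}_X Y=0$ for decomposable $X$, extending by $C^\infty(M)$-linearity; then write a general $y$ as a finite sum of decomposables and intersect the resulting neighborhoods. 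Since this merely re-proves a piece of Proposition \ref{FlatnessProp}, the first route is preferable.
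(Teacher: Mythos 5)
Your proposal is correct and matches the paper's intent: the corollary is stated without a separate proof precisely because it is the specialization of Proposition \ref{FlatnessProp} to the induced higher connection, whose twist fields vanish by Theorem \ref{InducedResultA}, so condition (ii) reads $\widetilde{\nabla}_XY=F^{k,l}(X,Y)=0$. Your alternative direct argument is also exactly the computation already carried out in the $(i)\Rightarrow(ii)$ part of that proposition's proof, so the first route is indeed the right one.
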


\section{Extension to Differential Forms}
Let $\nabla$ be a higher connection,  $\omega \in \Omega^l(M)$, and  $X\in A^k(M)$.  For $k>0$ and $l-k+1\ge 0$, we define $\nabla_X\omega\in \Omega^{l-k+1}(M)$ via
\begin{align}
\label{CovDerDiffForm}
\nabla_X\omega(Y):=(-1)^{(k-1)(l-1)}L_X i_Y\omega-\omega(\nabla_X Y)
\end{align}
for all $Y\in A^{l-k+1}(M)$.  (For $\eta\in \Omega^0(M)$ and $f\in A^0(M):=C^\infty(M)$, we understand $\eta(f)$ to mean $f\eta$.)  Lastly, for $k=0$ or $l-k+1<0$, we set $\nabla_X\omega:=0$.  We will now verify that $\nabla_X\omega$ is indeed an $(l-k+1)$-form.  To do this, we need the following lemmas:
\begin{lemma}
\label{InteriorProduct2}
Let $X=X_1\wedge \cdots \wedge X_k$ be a decomposable $k$-vector field and let $\alpha\in \Omega^1(M)$ and $\omega\in \Omega^\bullet(M)$.  Then
\begin{equation}
\nonumber
i_X(\alpha\wedge \omega)=\sum_{j=1}^k(-1)^{j+1}\alpha(X_j)i_{X_1\wedge \cdots \wedge \wh{X}_j\wedge \cdots \wedge X_k}\omega+(-1)^k\alpha\wedge i_X\omega.
\end{equation}
\end{lemma}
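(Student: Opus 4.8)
The plan is to argue by induction on $k$, the order of the decomposable multivector field $X = X_1 \wedge \cdots \wedge X_k$, using only Proposition \ref{InteriorProduct1} and the degree $-1$ derivation identity (\ref{InteriorProductDeg1}). For the base case $k=1$, the asserted identity is just the statement that $i_{X_1}$ is a derivation of degree $-1$: by (\ref{InteriorProductDeg1}) applied with the $1$-form $\alpha$ in the first slot, $i_{X_1}(\alpha \wedge \omega) = (i_{X_1}\alpha)\wedge \omega - \alpha \wedge i_{X_1}\omega = \alpha(X_1)\,\omega - \alpha \wedge i_{X_1}\omega$, and the right-hand side of the claimed formula collapses to exactly this once one reads the empty wedge product $X_1 \wedge \cdots \wedge \wh{X}_1 \wedge \cdots \wedge X_1$ (no factors) as the constant function $1 \in A^0(M)$, so that $i_1\omega = \omega$.

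For the inductive step I would write $X = X' \wedge X_{k+1}$ with $X' := X_1 \wedge \cdots \wedge X_k$ and factor $i_X = i_{X_{k+1}} \circ i_{X'}$ using Proposition \ref{InteriorProduct1}(i). Applying the inductive hypothesis to $i_{X'}(\alpha \wedge \omega)$ and then hitting each term with $i_{X_{k+1}}$, two mechanisms enter. On each summand $\alpha(X_j)\, i_{X'[j]}\omega$, where $X'[j]$ denotes omission of $X_j$ from $X'$, the map $i_{X_{k+1}}$ is $C^\infty(M)$-linear in the form argument and, by Proposition \ref{InteriorProduct1}(i), converts $i_{X_{k+1}} i_{X'[j]}\omega$ into $i_{X'[j]\wedge X_{k+1}}\omega = i_{X[j]}\omega$, the omission of $X_j$ from the full $(k+1)$-fold product. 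On the tail term $(-1)^k\,\alpha \wedge i_{X'}\omega$, the degree $-1$ derivation property (\ref{InteriorProductDeg1}) gives $(-1)^k\bigl[\alpha(X_{k+1})\, i_{X'}\omega - \alpha \wedge i_{X_{k+1}} i_{X'}\omega\bigr]$, and Proposition \ref{InteriorProduct1}(i) rewrites $i_{X_{k+1}} i_{X'}\omega = i_X\omega$ while $i_{X'}\omega = i_{X[k+1]}\omega$. Collecting, the new term $(-1)^k\alpha(X_{k+1})\, i_{X[k+1]}\omega$ fills the $j = k+1$ slot with the correct sign since $(-1)^k = (-1)^{(k+1)+1}$, and $-(-1)^k\,\alpha \wedge i_X\omega = (-1)^{k+1}\,\alpha \wedge i_X\omega$ is the desired remaining term, which completes the induction.

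The only genuine hazard is the sign-and-index bookkeeping: one must check that $X'[j] \wedge X_{k+1}$ really equals the omission $X[j]$ from the $(k+1)$-fold product for $j \le k$, and that the term produced by the derivation rule exactly supplies the $j = k+1$ summand. No homogeneity assumption on $\omega$ is needed, since (\ref{InteriorProductDeg1}) and Proposition \ref{InteriorProduct1} hold in every degree; one could, if preferred, reduce to homogeneous $\omega$ by $C^\infty(M)$-linearity, but this does not simplify anything.
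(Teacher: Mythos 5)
Your proof is correct and follows essentially the same route as the paper: induction on $k$, with the base case given by the degree $-1$ derivation identity (\ref{InteriorProductDeg1}) and the inductive step carried out by factoring $i_{X\wedge X_{k+1}}=i_{X_{k+1}}\circ i_X$ via Proposition \ref{InteriorProduct1}(i) and reapplying (\ref{InteriorProductDeg1}) to the tail term. The sign bookkeeping you describe, including $(-1)^k=(-1)^{(k+1)+1}$ for the new $j=k+1$ summand, matches the paper's computation exactly.
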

\begin{proof}
We prove Lemma \ref{InteriorProduct2} by induction on $k$.  For $k=1$, $X$ is a vector field and (\ref{InteriorProductDeg1}) gives   
\begin{equation}
i_X(\alpha\wedge \omega)=\alpha(X)\omega-\alpha\wedge i_X\omega.
\end{equation}
Hence,  Lemma \ref{InteriorProduct2} holds for $k=1$.  Now suppose that Lemma \ref{InteriorProduct2} holds for $X=X_1\wedge \cdots \wedge X_k$.  Let $X_{k+1}\in A^1(M)$.   By Proposition \ref{InteriorProduct1}, we have
\begin{align}
\nonumber
i_{X\wedge X_{k+1}}(\alpha\wedge \omega) &= i_{X_{k+1}} i_X(\alpha\wedge \omega)\\
\nonumber
&=i_{X_{k+1}}\left(\sum_{j=1}^k(-1)^{j+1}\alpha(X_j)i_{X_1\wedge \cdots \wedge \wh{X}_j\wedge \cdots \wedge X_k}\omega+(-1)^k\alpha\wedge i_X\omega\right)\\
\nonumber
&=\sum_{j=1}^k(-1)^{j+1}\alpha(X_j)i_{X_{k+1}}i_{X_1\wedge \cdots \wedge \wh{X}_j\wedge \cdots \wedge X_k}\omega+(-1)^ki_{X_{k+1}}(\alpha\wedge i_X\omega)\\
\nonumber
&=\sum_{j=1}^k(-1)^{j+1}\alpha(X_j)i_{X_1\wedge \cdots \wedge \wh{X}_j\wedge \cdots \wedge X_{k+1}}\omega\\
\nonumber
&+(-1)^k(\alpha(X_{k+1}) i_X\omega-\alpha\wedge i_{X_{k+1}}i_X\omega)\\
\nonumber
&=\sum_{j=1}^k(-1)^{j+1}\alpha(X_j)i_{X_1\wedge \cdots \wedge \wh{X}_j\wedge \cdots \wedge X_{k+1}}\omega+(-1)^{k+2}\alpha(X_{k+1}) i_X\omega\\
\nonumber
&+(-1)^{k+1}\alpha\wedge i_{X\wedge X_{k+1}}\omega\\
\nonumber
&=\sum_{j=1}^{k+1}(-1)^{j+1}\alpha(X_j)i_{X_1\wedge \cdots \wedge \wh{X}_j\wedge \cdots \wedge X_{k+1}}+(-1)^{k+1}\alpha\wedge i_{X\wedge X_{k+1}}\omega,
\end{align}
where we have used the induction hypothesis in the second equality, and Proposition \ref{InteriorProduct1} and equation (\ref{InteriorProductDeg1}) in the fourth equality.  This completes the proof.
\end{proof}

\begin{lemma}
\label{SchoutenLemma}
Let $X=X_1\wedge \cdots \wedge X_k$ be a decomposable $k$-vector field and let $f\in C^\infty(M)$.  Then
\begin{equation}
\nonumber
[X,f]=\sum_{j=1}^k(-1)^{k-j} (X_jf) X_1\wedge \cdots \wedge\wh{X}_j\wedge \cdots \wedge X_k
\end{equation}
\end{lemma}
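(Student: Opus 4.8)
The plan is to reduce the statement to the interior product $i_f$ and then proceed by induction on $k$. By Proposition \ref{InteriorProductProp}(v) one has $[X,f] = (-1)^{k-1} i_f X$ for $X \in A^k(M)$, so it suffices to establish
\begin{equation}
\nonumber
i_f(X_1 \wedge \cdots \wedge X_k) = \sum_{j=1}^k (-1)^{j-1} (X_j f)\, X_1 \wedge \cdots \wedge \wh{X}_j \wedge \cdots \wedge X_k ;
\end{equation}
multiplying through by $(-1)^{k-1}$ and using $(-1)^{k-1}(-1)^{j-1} = (-1)^{k+j-2} = (-1)^{k-j}$ then yields the formula of the lemma. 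For the base case $k=1$, Definition \ref{InteriorProductDef} gives directly $i_f X_1 = X_1(f) = X_1 f \in C^\infty(M)$, which is the right-hand side above when $k=1$.

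For the inductive step, assuming the displayed identity for decomposable $k$-vector fields, I would write $X_1 \wedge \cdots \wedge X_{k+1} = X_1 \wedge (X_2 \wedge \cdots \wedge X_{k+1})$ and apply the graded Leibniz rule of Proposition \ref{InteriorProductProp}(iv) with the left factor $X_1 \in A^1(M)$:
\begin{equation}
\nonumber
i_f\bigl(X_1 \wedge (X_2 \wedge \cdots \wedge X_{k+1})\bigr) = (X_1 f)\,(X_2 \wedge \cdots \wedge X_{k+1}) - X_1 \wedge i_f(X_2 \wedge \cdots \wedge X_{k+1}),
\end{equation}
where in the first term I have substituted $i_f X_1 = X_1 f$ and pulled the scalar out front (legitimate since $\wedge^0 TM \cong \mathbb{R}$). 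Feeding the induction hypothesis into the last term and using $-(-1)^{j-2} = (-1)^{j-1}$ to rearrange the signs gives the identity for $k+1$ factors, completing the induction.

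I do not anticipate a genuine obstacle; the content is entirely sign-bookkeeping. The only point requiring care is the exponent $(-1)^k$ in Proposition \ref{InteriorProductProp}(iv), which depends on the degree of the \emph{left} factor of the wedge product — this is why I peel off a single vector field $X_1$ (degree $1$) at each stage rather than splitting the wedge in the middle. As an alternative route, one could append $X_{k+1}$ on the right and use $[X \wedge X_{k+1}, f] = (-1)^k i_f(X \wedge X_{k+1}) = (-1)^k\bigl((i_f X) \wedge X_{k+1} + (-1)^k X \wedge (X_{k+1}f)\bigr)$ together with the inductive expression $i_f X = (-1)^{k-1}[X,f]$; the two arguments differ only cosmetically.
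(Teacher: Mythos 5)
Your proof is correct, and there is no circularity: Proposition \ref{InteriorProductProp}(v) is established in Section 2 independently of Lemma \ref{SchoutenLemma}, so you may freely invoke $[X,f]=(-1)^{k-1}i_fX$, and your induction on $i_f(X_1\wedge\cdots\wedge X_k)$ via the graded Leibniz rule of Proposition \ref{InteriorProductProp}(iv), peeling off the degree-one factor $X_1$ on the left, goes through with exactly the signs you state (including the base case $i_fX_1=X_1f$ and the conversion $(-1)^{k-1}(-1)^{j-1}=(-1)^{k-j}$). The paper takes a different route: it inducts directly on the Schouten--Nijenhuis bracket itself, appending $X_{k+1}$ on the right and using the graded antisymmetry (iii) and the derivation property (iv) of Definition \ref{SchoutenBracketMVF} together with $[X_{k+1},f]=X_{k+1}f$, never passing through the interior product. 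Your reduction buys economy, since the lemma becomes an easy corollary of identities already proved (indeed the paper's proof of Proposition \ref{InteriorProductProp}(v) already contains essentially the same inductive sign bookkeeping, which the paper's argument for this lemma repeats); the paper's argument, in exchange, is self-contained at the level of the bracket axioms and does not depend on the $i_f$ formalism. The alternative you sketch at the end, via $[X\wedge X_{k+1},f]=(-1)^ki_f(X\wedge X_{k+1})$, is structurally the closest to the paper's computation, differing only in that it still routes through $i_f$ rather than the bracket axioms directly.
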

\begin{proof}
We prove this by induction on $k$.  For $k=1$, we have $X=X_1$ and $[X_1,f]:=X_1f$.  Hence, the lemma holds for $k=1$.  Suppose that the lemma holds for $X=X_1\wedge \cdots \wedge X_k$.  Let $X_{k+1}\in A^1(M)$.  Then
\begin{align}
\nonumber
[X\wedge X_{k+1},f]&=-(-1)^k[f,X\wedge X_{k+1}]\\
\nonumber
&=-(-1)^k([f,X]\wedge X_{k+1}+(-1)^k X\wedge [f,X_{k+1}])\\
\nonumber
&=-(-1)^k(-(-1)^{k-1}[X,f]\wedge X_{k+1}+(-1)^{k+1} X\wedge [X_{k+1},f])\\
\nonumber
&=-(-1)^k((-1)^{k}[X,f]\wedge X_{k+1}+(-1)^{k+1}  (X_{k+1}f)X\\
\nonumber
&=-[X,f]\wedge X_{k+1}+ (X_{k+1}f)X\\
\nonumber
&=\sum_{j=1}^k(-1)^{k+1-j} (X_jf) X_1\wedge \cdots\wedge \wh{X}_j\wedge \cdots \wedge X_k\wedge X_{k+1}+ (X_{k+1}f)X\\
\nonumber
&=\sum_{j=1}^{k+1}(-1)^{k+1-j} (X_jf) X_1\wedge \cdots\wedge \wh{X}_j\wedge \cdots \wedge X_k\wedge X_{k+1},
\end{align}
where we have used the induction hypothesis in the sixth equality.   This completes the proof.
\end{proof}

\begin{lemma}
\label{InteriorProduct3}
Let $X\in A^k(M)$, $f\in C^\infty(M)$, and $\omega\in \Omega^\bullet(M)$.  Then 
\begin{equation}
\nonumber
L_Xf\omega = fL_X\omega+i_{[X,f]}\omega.
\end{equation}
\end{lemma}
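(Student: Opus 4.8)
The plan is to unwind the definition (\ref{LieDerivativeMVF}) of the Lie derivative along a multivector field and reduce the statement to an identity involving only interior products. Recall that $L_X\alpha = d\,i_X\alpha - (-1)^k i_X\,d\alpha$ for $X\in A^k(M)$ and that $i_X$ is $C^\infty(M)$-linear. Applying this to $f\omega$, using $i_X(f\omega)=f\,i_X\omega$, the Leibniz rule $d(f\omega)=df\wedge\omega+f\,d\omega$, and the fact that $d$ is a derivation, one obtains
\begin{equation}
\nonumber
L_X(f\omega)=df\wedge i_X\omega+f\,d\,i_X\omega-(-1)^k i_X(df\wedge\omega)-(-1)^k f\,i_X\,d\omega.
\end{equation}
Subtracting $f L_X\omega=f\,d\,i_X\omega-(-1)^k f\,i_X\,d\omega$ isolates
\begin{equation}
\nonumber
L_X(f\omega)-f L_X\omega=df\wedge i_X\omega-(-1)^k i_X(df\wedge\omega),
\end{equation}
so it remains only to identify the right-hand side with $i_{[X,f]}\omega$.

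Since both sides of the desired identity are additive in $X$ (the Lie derivative and interior product are additive in the multivector argument, and $[\cdot,f]$ is additive by bilinearity of the Schouten--Nijenhuis bracket) and the identity is local, it suffices to treat $X=X_1\wedge\cdots\wedge X_k$ with $X_i\in A^1(M)$. For such $X$, apply Lemma \ref{InteriorProduct2} with $\alpha=df$:
\begin{equation}
\nonumber
i_X(df\wedge\omega)=\sum_{j=1}^k(-1)^{j+1}(X_j f)\,i_{X[j]}\omega+(-1)^k df\wedge i_X\omega,
\end{equation}
where $X[j]:=X_1\wedge\cdots\wedge\wh{X}_j\wedge\cdots\wedge X_k$. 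Substituting this above, the two $df\wedge i_X\omega$ terms cancel and one is left with
\begin{equation}
\nonumber
L_X(f\omega)-f L_X\omega=\sum_{j=1}^k(-1)^{k+j}(X_j f)\,i_{X[j]}\omega.
\end{equation}

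Finally, Lemma \ref{SchoutenLemma} gives $[X,f]=\sum_{j=1}^k(-1)^{k-j}(X_j f)\,X[j]$, so by linearity of the interior product $i_{[X,f]}\omega=\sum_{j=1}^k(-1)^{k-j}(X_j f)\,i_{X[j]}\omega$. Since $(-1)^{k-j}=(-1)^{k+j}$, this coincides with the expression just obtained, which completes the proof. The only real work is the sign bookkeeping in the cancellation step and the (routine) justification of the reduction to decomposable $X$; alternatively, the identity can be proved by induction on $k$, with base case $k=1$ the familiar Cartan-calculus identity $L_X(f\omega)=f L_X\omega+(Xf)\omega=f L_X\omega+i_{[X,f]}\omega$ and inductive step using $i_{X\wedge X_{k+1}}=i_{X_{k+1}}\circ i_X$ together with (iv) of Definition \ref{SchoutenBracketMVF}.
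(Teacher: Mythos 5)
Your proof is correct and follows essentially the same route as the paper's: reduce to decomposable $X$, expand $L_X(f\omega)$ via (\ref{LieDerivativeMVF}) and the Leibniz rule for $d$, apply Lemma \ref{InteriorProduct2} with $\alpha=df$, and identify the surviving sum with $i_{[X,f]}\omega$ via Lemma \ref{SchoutenLemma}. The sign bookkeeping checks out, and your explicit justification of the reduction to decomposable $X$ only makes explicit what the paper leaves as ``without loss of generality.''
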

\begin{proof}
Without loss of generality, let $X=X_1\wedge \cdots \wedge X_k$ be a decomposable $k$-vector field.  Then
\begin{align}
\nonumber
L_Xf\omega &=di_Xf\omega-(-1)^ki_Xd f\omega\\
\nonumber
&=d(fi_X\omega) - (-1)^ki_X(df\wedge \omega+fd\omega)\\
\nonumber
&=df\wedge i_X\omega+f d i_X\omega\\
\nonumber
&-(-1)^k\left(\sum_{j=1}^{k} (-1)^{j+1}(X_j f) i_{X_1\wedge\cdots \wedge\wh{X}_j \wedge \cdots \wedge X_k}\omega+(-1)^kdf\wedge i_X\omega+fi_Xd\omega\right)\\
\nonumber
&=fdi_X\omega+\sum_{j=1}^k(-1)^{k-j}(X_j f) i_{X_1\wedge\cdots \wedge\wh{X}_j \wedge \cdots \wedge X_k}\omega-(-1)^k f i_Xd\omega\\
\nonumber
&=fL_X\omega+\sum_{j=1}^k(-1)^{k-j}(X_j f) i_{X_1\wedge\cdots \wedge\wh{X}_j \wedge \cdots \wedge X_k}\omega\\
\nonumber
&=fL_X\omega+i_{[X,f]}\omega,
\end{align}
where  Lemma \ref{InteriorProduct2} is used in the third equality and Lemma \ref{SchoutenLemma} is used in the last equality.
\end{proof}
\begin{proposition}
\label{P925}
Let $\nabla$ be a higher connection.  Then $\nabla_X\omega=L_X\omega\in C^\infty(M)$ for $X\in A^k(M)$ and $\omega\in \Omega^{k-1}(M)$.
\end{proposition}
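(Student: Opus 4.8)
The plan is to compute the right-hand side of the defining formula (\ref{CovDerDiffForm}) directly, using Lemma \ref{InteriorProduct3}. First I would settle the degree bookkeeping. With $X\in A^k(M)$ and $\omega\in\Omega^{k-1}(M)$ one has $l:=k-1$, so $l-k+1=0$; thus $A^{l-k+1}(M)=A^0(M)=C^\infty(M)$, and by the degree count following (\ref{LieDerivativeMVF}) we have $L_X\omega\in\Omega^{l-k+1}(M)=C^\infty(M)$, which is the space in which the asserted identity lives. (For $k=0$ the claim is vacuous since $\Omega^{-1}(M)=0$, so assume $k\ge 1$.) I would also note at the outset that $(k-1)(k-2)$, being a product of consecutive integers, is even, so the sign in (\ref{CovDerDiffForm}) is $(-1)^{(k-1)(l-1)}=(-1)^{(k-1)(k-2)}=1$.

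Next I would take an arbitrary test element $Y=g\in A^0(M)=C^\infty(M)$ and evaluate (\ref{CovDerDiffForm}). Using the convention $i_g\omega=g\omega$ for $g\in A^0(M)$, axiom (iv) of Definition \ref{HCDef} (so $\nabla_Xg=[X,g]$), and the fact just noted that the sign is $+1$, the right-hand side becomes $L_X(g\omega)-\omega([X,g])$. Now Lemma \ref{InteriorProduct3} gives $L_X(g\omega)=gL_X\omega+i_{[X,g]}\omega$, and since $[X,g]\in A^{k-1}(M)$ and $\omega\in\Omega^{k-1}(M)$ the interior product $i_{[X,g]}\omega$ is precisely the function $\omega([X,g])$ (the equal-degree case of the interior product). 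Hence the right-hand side collapses to $gL_X\omega$. Since under the stated convention the value of a $0$-form $\eta$ on $g\in A^0(M)$ is $g\eta$, this says exactly that $\nabla_X\omega$ is the well-defined $0$-form $L_X\omega$; in particular $\nabla_X\omega=L_X\omega\in C^\infty(M)$.

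I do not expect a genuine obstacle here: the only points requiring care are the degree-zero conventions (the meaning of $i_g$ for $g\in A^0(M)$ and of evaluating a $0$-form on a function), the parity of $(k-1)(k-2)$, and recognizing $i_{[X,g]}\omega=\omega([X,g])$. As a shortcut one may instead specialize directly to $g=1$: since $i_1X=0$ (Proposition \ref{InteriorProductProp}(iii) gives $i_1X=i_{1\cdot 1}X=i_1X+i_1X$) one gets $[X,1]=(-1)^{k-1}i_1X=0$ by Proposition \ref{InteriorProductProp}(v), so $\omega([X,1])=0$, and (\ref{CovDerDiffForm}) together with the sign being $+1$ immediately yields $\nabla_X\omega=L_X\omega$.
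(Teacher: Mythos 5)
Your main argument is correct and coincides with the paper's own proof: evaluate $\nabla_X\omega$ on an arbitrary test function via (\ref{CovDerDiffForm}), note $(-1)^{(k-1)(k-2)}=1$, apply Lemma \ref{InteriorProduct3}, and cancel $i_{[X,g]}\omega=\omega([X,g])$ to get $g\nabla_X\omega=gL_X\omega$. Your $g=1$ shortcut only pins down the candidate value, whereas the computation for arbitrary $g$ is what shows the right-hand side has the form $g\cdot(\text{fixed function})$, i.e.\ that the degree-zero case of (\ref{CovDerDiffForm}) is consistent, so keep the full version as the proof.
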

\begin{proof}
Let $f\in C^\infty(M)$.  Then
\begin{align}
\nonumber
f\nabla_X\omega&=\nabla_X\omega(f)\\
\nonumber
&:=(-1)^{(k-1)(k-2)}L_Xi_f\omega-\omega(\nabla_Xf)\\
\nonumber
&=L_Xf\omega-\omega([X,f])\\
\nonumber
&=fL_X\omega+i_{[X,f]}\omega-\omega([X,f])\\
\nonumber
&=fL_X\omega.
\end{align}
where Lemma \ref{InteriorProduct3} is used in the fourth equality.  This proves the proposition.
\end{proof}

\begin{theorem}
\label{CovDerDiffFormProp}
Let $\nabla$ be a higher connection, $\omega \in \Omega^l(M)$, and $X\in A^k(M)$ with $k> 0$. Then $\nabla_X \omega\in \Omega^{l-k+1}(M)$.
\end{theorem}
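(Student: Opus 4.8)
Set $m:=l-k+1$. The plan is first to dispose of the degenerate ranges: if $m<0$ then $\Omega^{m}(M)=0$ by convention and there is nothing to prove, while if $m=0$ the statement is exactly Proposition~\ref{P925} (with $\nabla_X\omega=L_X\omega$). So assume $m\ge 1$ and consider the map
\[
\Phi:A^{m}(M)\longrightarrow C^\infty(M),\qquad \Phi(Y):=(-1)^{(k-1)(l-1)}L_X i_Y\omega-\omega(\nabla_X Y),
\]
which indeed lands in $C^\infty(M)$: one has $i_Y\omega\in\Omega^{l-m}(M)=\Omega^{k-1}(M)$ and $L_X$ lowers form degree by $k-1$, so $L_X i_Y\omega\in\Omega^0(M)$, while $\nabla_X Y\in A^{l}(M)$ pairs with $\omega\in\Omega^l(M)$. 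Since a differential $m$-form is nothing but a $C^\infty(M)$-linear map $A^{m}(M)=\Gamma(\wedge^{m}TM)\to C^\infty(M)$ (the standard tensoriality principle together with the natural identification $(\wedge^{m}TM)^\ast\cong\wedge^{m}T^\ast M$ of Proposition~\ref{VectorCovectorProp}), the whole theorem reduces to showing that $\Phi$ is $C^\infty(M)$-linear. Additivity being immediate, the only point to verify is $\Phi(fY)=f\,\Phi(Y)$ for $f\in C^\infty(M)$.

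To do this I would expand $\Phi(fY)$ using three ingredients: $i_{fY}\omega=f\,i_Y\omega$ (immediate from the definition of the interior product of a multivector field with a form); Lemma~\ref{InteriorProduct3}, giving $L_X(f\,i_Y\omega)=f\,L_X i_Y\omega+i_{[X,f]}i_Y\omega$; and axiom~(v) of Definition~\ref{HCDef}, giving $\nabla_X(fY)=[X,f]\wedge Y+f\,\nabla_X Y$, hence $\omega(\nabla_X(fY))=\omega([X,f]\wedge Y)+f\,\omega(\nabla_X Y)$. Collecting terms yields
\[
\Phi(fY)=f\,\Phi(Y)+(-1)^{(k-1)(l-1)}\,i_{[X,f]}i_Y\omega-\omega([X,f]\wedge Y),
\]
so everything comes down to showing the last two terms cancel. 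By Proposition~\ref{InteriorProduct1}(i), $i_{[X,f]}i_Y\omega=i_{Y\wedge[X,f]}\omega=\omega(Y\wedge[X,f])$ since $Y\wedge[X,f]\in A^{l}(M)$; transposing the two wedge factors gives $\omega(Y\wedge[X,f])=(-1)^{m(k-1)}\omega([X,f]\wedge Y)$, and a parity check shows $m(k-1)=(l-k+1)(k-1)\equiv(k-1)(l-1)\pmod 2$ (their difference is $(k-1)(2-k)=-(k-1)(k-2)$, which is even). Therefore $(-1)^{(k-1)(l-1)}i_{[X,f]}i_Y\omega=\omega([X,f]\wedge Y)$, the correction terms disappear, and $\Phi(fY)=f\,\Phi(Y)$.

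Once $\Phi$ is known to be $C^\infty(M)$-linear, it corresponds to a unique section of $\wedge^{m}T^\ast M$, i.e. to an element of $\Omega^{l-k+1}(M)$, and this element is by construction $\nabla_X\omega$; this proves the theorem. The one place that needs care is the sign-and-degree bookkeeping in the last step — keeping straight which form degree ($k-1$, $l$, or $l-k+1$) each interior product acts on and checking that the two factors $(-1)^{(k-1)(l-1)}$ and $(-1)^{m(k-1)}$ coincide modulo $2$; everything else is a routine substitution.
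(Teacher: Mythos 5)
Your proposal is correct and follows essentially the same route as the paper: reduce the statement to $C^\infty(M)$-linearity of $Y\mapsto(-1)^{(k-1)(l-1)}L_Xi_Y\omega-\omega(\nabla_XY)$ and verify it using Lemma~\ref{InteriorProduct3}, axiom (v) of Definition~\ref{HCDef}, and Proposition~\ref{InteriorProduct1}, with the same cancellation of the two correction terms (your parity check $m(k-1)\equiv(k-1)(l-1)\pmod 2$ is just the paper's observation that the net sign is $(-1)^{k(k-1)}=+1$). The only cosmetic difference is your explicit dispatch of the cases $l-k+1<0$ and $l-k+1=0$ (the latter via Proposition~\ref{P925}), which the paper treats implicitly.
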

\begin{proof}
For $l-k+1\ge 0$, let $Y\in A^{l-k+1}(M)$.  It follows from (\ref{CovDerDiffForm}) that $\nabla_X\omega(Y)\in C^\infty(M)$.  To prove Theorem \ref{CovDerDiffFormProp}, it suffices to show
\begin{equation}
\label{CovDerDiffFormProp1}
\nabla_X\omega(fY)=f\nabla_X\omega(Y)
\end{equation}
for $f\in C^\infty(M)$.  We now verify (\ref{CovDerDiffFormProp1}):
\begin{align}
\nonumber
\nabla_X\omega(fY)&=(-1)^{(k-1)(l-1)}L_X i_{fY}\omega-\omega(\nabla_X fY)\\
\nonumber
&=(-1)^{(k-1)(l-1)}L_X fi_Y\omega-\omega([X,f]\wedge Y)-f\omega(\nabla_XY)\\
\nonumber
&=(-1)^{(k-1)(l-1)}fL_X i_Y\omega+(-1)^{(k-1)(l-1)}i_{[X,f]}i_Y\omega-\omega([X,f]\wedge Y)\\
\nonumber
&-f\omega(\nabla_XY)\\
\nonumber
&=f\nabla_X\omega(Y)+(-1)^{(k-1)(l-1)}i_{[X,f]}i_Y\omega-\omega([X,f]\wedge Y)\\
\nonumber
&=f\nabla_X\omega(Y)+(-1)^{(k-1)(l-1)}i_Y\omega([X,f])-\omega([X,f]\wedge Y)\\
\nonumber
&=f\nabla_X\omega(Y)+(-1)^{(k-1)(l-1)}\omega(Y\wedge [X,f])-\omega([X,f]\wedge Y)\\
\nonumber
&=f\nabla_X\omega(Y)+(-1)^{k(k-1)}\omega([X,f]\wedge Y)-\omega([X,f]\wedge Y)\\
\nonumber
&=f\nabla_X\omega(Y),
\end{align}
where Lemma \ref{InteriorProduct3} is used in the third equality.  This completes the proof.
\end{proof}
\noindent We now conclude this section with some of the properties of (\ref{CovDerDiffForm}).  Before doing so, we need a quick lemma: 
\begin{lemma}
\label{LieDerivativeLemma1}
Let $X\in A^k(M)$, $\eta\in \Omega^\bullet(M)$, and $f\in C^\infty(M)$.  Then 
\begin{equation}
\nonumber
L_{fX}\eta=df\wedge i_X\eta +fL_X\eta.
\end{equation}
\end{lemma}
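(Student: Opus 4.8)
The plan is to unwind both sides using the definition of the Lie derivative of a form along a multivector field, namely (\ref{LieDerivativeMVF}): for $Z\in A^p(M)$ one has $L_Z\eta = d\,i_Z\eta - (-1)^p i_Z d\eta$. Applying this with $Z = fX \in A^k(M)$ gives $L_{fX}\eta = d\,i_{fX}\eta - (-1)^k i_{fX}d\eta$, so the whole lemma reduces to understanding $i_{fX}$ and the Leibniz rule for $d$.

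First I would record that $i_{fX}\alpha = f\, i_X\alpha$ for every $\alpha\in\Omega^\bullet(M)$. This follows either directly from the definition of the interior product by a multivector field — $i_{fX}\alpha(Y) = \alpha(fX\wedge Y) = f\,\alpha(X\wedge Y) = f\, i_X\alpha(Y)$ — or, more formally, by writing $fX = f\wedge X$ with $f\in A^0(M)$ and invoking Proposition \ref{InteriorProduct1}(i) together with $i_f\alpha := f\alpha$ and the $C^\infty(M)$-linearity of $i_X$. Then
\begin{align}
\nonumber
L_{fX}\eta &= d\,i_{fX}\eta - (-1)^k i_{fX} d\eta \\
\nonumber
&= d\bigl(f\, i_X\eta\bigr) - (-1)^k f\, i_X d\eta \\
\nonumber
&= df\wedge i_X\eta + f\, d\,i_X\eta - (-1)^k f\, i_X d\eta \\
\nonumber
&= df\wedge i_X\eta + f\bigl(d\,i_X\eta - (-1)^k i_X d\eta\bigr) \\
\nonumber
&= df\wedge i_X\eta + f\, L_X\eta,
\end{align}
where the third equality is the graded Leibniz rule for the exterior derivative (with $df$ a $1$-form, so no sign appears), and the last equality is again (\ref{LieDerivativeMVF}).

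There is essentially no obstacle here: the only point requiring a line of justification is the identity $i_{fX} = f\, i_X$, and even that is immediate from the definitions. By linearity it suffices to treat $\eta$ homogeneous, but in fact the argument above never uses homogeneity, so no reduction is needed. Hence the lemma follows.
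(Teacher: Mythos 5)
Your proof is correct and follows the same route as the paper's own argument: expand $L_{fX}\eta$ via (\ref{LieDerivativeMVF}), use the $C^\infty(M)$-linearity $i_{fX}=f\,i_X$, apply the Leibniz rule for $d$, and reassemble $f\,L_X\eta$. The only difference is that you spell out the justification of $i_{fX}=f\,i_X$, which the paper uses implicitly.
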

\begin{proof}
From (\ref{LieDerivativeMVF}), we have
\begin{align}
\nonumber
L_{fX}\eta&=di_{fX}\eta-(-1)^ki_{fX}d\eta\\
\nonumber
&=dfi_X\eta-(-1)^kfi_Xd\eta\\
\nonumber
&=df\wedge i_X\eta+fdi_X\eta-(-1)^kfi_Xd\eta\\
\nonumber
&=df\wedge i_X\eta+fL_X\eta.
\end{align}
\end{proof}

\begin{theorem}
\label{CovDerDiffFormProp2}
Let $\nabla$ be a higher connection.  For $\omega \in \Omega^l(M)$, $X\in A^k(M)$ $(k>0)$, and $f\in C^\infty(M)$, $\nabla$ satisfies
\begin{itemize}
\item[(i)] $\nabla_{fX}\omega=f\nabla_X\omega$
\item[(ii)] $\nabla_Xf\omega=f\nabla_X\omega+i_{[X,f]} \omega$
\end{itemize}
If $\nabla$ is also upper induced, then
\begin{itemize}
\item[(iii)] $\nabla_X(i_W\omega)=(-1)^{j(k-1)}(i_W\nabla_X\omega+i_{\nabla_XW}\omega)$ for $W\in A^j(M)$. 
\end{itemize}
\end{theorem}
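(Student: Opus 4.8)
The plan is to prove each part by evaluating both sides on an arbitrary multivector field of the appropriate degree, unwinding the defining formula (\ref{CovDerDiffForm}), and then simplifying via the Lie-derivative lemmas, the axioms of Definition \ref{HCDef}, and (for (iii)) the upper-induced rule. The degenerate cases, where the relevant degree $l-k+1$ is negative and the covariant derivative is $0$ by convention, are immediate and are set aside.

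For (i), fix $Y\in A^{l-k+1}(M)$. Applying (\ref{CovDerDiffForm}) to $fX$ and using $\nabla_{fX}Y=f\nabla_XY$ (axiom (ii) of Definition \ref{HCDef}), the only nonroutine step is the Lie-derivative term: Lemma \ref{LieDerivativeLemma1} gives $L_{fX}i_Y\omega=df\wedge i_X(i_Y\omega)+fL_Xi_Y\omega$, and since $i_Y\omega\in\Omega^{k-1}(M)$ is a form of degree strictly below $k=\deg X$, one has $i_X(i_Y\omega)=0$. Hence $\nabla_{fX}\omega(Y)=f\nabla_X\omega(Y)$ for all $Y$, which is (i).

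For (ii), fix $Y\in A^{l-k+1}(M)$ and apply (\ref{CovDerDiffForm}) to the $l$-form $f\omega$, using $i_Y(f\omega)=f\,i_Y\omega$, $(f\omega)(\nabla_XY)=f\,\omega(\nabla_XY)$, and Lemma \ref{InteriorProduct3} to get $L_X(f\,i_Y\omega)=fL_Xi_Y\omega+i_{[X,f]}i_Y\omega$. This produces $\nabla_X(f\omega)(Y)=f\nabla_X\omega(Y)+(-1)^{(k-1)(l-1)}i_{[X,f]}i_Y\omega$. Because $[X,f]\in A^{k-1}(M)$ by Proposition \ref{InteriorProductProp}(v), Proposition \ref{InteriorProduct1} and graded commutativity of $\wedge$ rewrite the last term as $(-1)^{(k-1)(l-1)}(-1)^{(k-1)(l-k+1)}i_{[X,f]}\omega(Y)$; the accumulated sign $(-1)^{(k-1)(2l-k)}$ equals $+1$ since $(k-1)(2l-k)\equiv k(k-1)\equiv 0\pmod 2$, giving (ii).

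For (iii), assume $\nabla$ upper induced, fix $W\in A^j(M)$, and evaluate at $Z\in A^{l-j-k+1}(M)$. Using $i_Zi_W=i_{W\wedge Z}$ (Proposition \ref{InteriorProduct1}) the left side is $(-1)^{(k-1)(l-j-1)}L_Xi_{W\wedge Z}\omega-\omega(W\wedge\nabla_XZ)$. On the right, $(i_W\nabla_X\omega)(Z)=\nabla_X\omega(W\wedge Z)$; expanding this by (\ref{CovDerDiffForm}) and the upper-induced identity $\nabla_X(W\wedge Z)=(\nabla_XW)\wedge Z+(-1)^{(k-1)j}W\wedge\nabla_XZ$ produces a term $\omega((\nabla_XW)\wedge Z)$ that cancels $(i_{\nabla_XW}\omega)(Z)$, so that $(i_W\nabla_X\omega)(Z)+(i_{\nabla_XW}\omega)(Z)=(-1)^{(k-1)(l-1)}L_Xi_{W\wedge Z}\omega-(-1)^{(k-1)j}\omega(W\wedge\nabla_XZ)$. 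Multiplying by $(-1)^{j(k-1)}$ and using $(-1)^{2j(k-1)}=1$ and $(-1)^{(k-1)(l+j-1)}=(-1)^{(k-1)(l-j-1)}$ (exponents differing by $2j(k-1)$) reproduces the left side; taking $j=0$ recovers (ii) as a consistency check. The only genuine difficulty is the sign bookkeeping — in particular recognizing that every factor created by reordering wedge products, such as $(-1)^{k(k-1)}$ in (ii) and $(-1)^{2j(k-1)}$ in (iii), is trivial — together with the observation in (i) that interior product by a $k$-vector field kills forms of degree less than $k$.
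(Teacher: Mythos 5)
Your proposal is correct and follows essentially the same route as the paper: evaluate on a test multivector field, unwind (\ref{CovDerDiffForm}), use Lemma \ref{LieDerivativeLemma1} for (i), Lemma \ref{InteriorProduct3} for (ii), Proposition \ref{InteriorProduct1} for reordering interior products, and the upper-induced identity (via the same cancellation of the $\omega((\nabla_XW)\wedge Z)$ term) for (iii). The only cosmetic difference is in (i), where you kill the extra term because $i_Y\omega$ has degree $k-1<k$, while the paper notes $i_Xi_Y\omega=i_{Y\wedge X}\omega$ vanishes since $Y\wedge X$ has degree $l+1$; these are equivalent.
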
 
\begin{proof}
Let $l-k+1\ge 0$ and let $Y\in A^{l-k+1}(M)$.  For (i), we have
\begin{align}
\nonumber
\nabla_{fX}\omega(Y)&=(-1)^{(k-1)(l-1)}L_{fX}i_Y\omega-\omega(\nabla_{fX}Y)\\
\nonumber
&=(-1)^{(k-1)(l-1)}df\wedge i_Xi_Y\omega+(-1)^{(k-1)(l-1)}fL_Xi_Y\omega-f\omega(\nabla_{X}Y)\\
\nonumber
&=(-1)^{(k-1)(l-1)}df\wedge i_{Y\wedge X}\omega+f\nabla_X\omega(Y)\\
\nonumber
&=f\nabla_X\omega(Y),
\end{align}
where Lemma \ref{LieDerivativeLemma1} was used in the second equality, Proposition  \ref{InteriorProduct1} was used in the third equality, and the last equality follows from the fact that $Y\wedge X\in A^{l+1}(M)$ and $\omega\in \Omega^l(M)$.

For (ii), we have
\begin{align}
\nonumber
\nabla_X f\omega (Y)&=(-1)^{(k-1)(l-1)}L_Xi_Y(f\omega)-(f\omega)(\nabla_XY)\\
\nonumber
&=(-1)^{(k-1)(l-1)}L_Xfi_Y\omega-(f\omega)(\nabla_XY)\\
\nonumber
&=(-1)^{(k-1)(l-1)}fL_Xi_Y\omega+(-1)^{(k-1)(l-1)}i_{[X,f]}i_Y\omega-(f\omega)(\nabla_XY)\\
\nonumber
&=f\nabla_X\omega(Y)+(-1)^{(k-1)(l-1)}i_{[X,f]}i_Y\omega\\
\nonumber
&=f\nabla_X\omega(Y)+(-1)^{k(k-1)}i_Yi_{[X,f]}\omega\\
\nonumber
&=f\nabla_X\omega(Y)+i_{[X,f]}\omega(Y),
\end{align}
where Lemma \ref{InteriorProduct3} is used in the third equality and Proposition \ref{InteriorProduct1} is used in the fifth equality.

For (iii), let $\nabla$ be an upper induced higher connection.  Note that for $j=0$, (iii) follows directly from (ii) of Theorem \ref{CovDerDiffFormProp2}.  Now assume that $j>0$.  Let $t=l-j-k+1\ge 0$ and let $Z\in A^t(M)$.  Then
\begin{align}
\nonumber
\nabla_X(i_W\omega)(Z)&=(-1)^{(l-j-1)(k-1)}L_Xi_Zi_W\omega-i_W\omega(\nabla_X Z)\\
\nonumber
&=(-1)^{(l-1)(k-1)}(-1)^{j(k-1)}L_Xi_{W\wedge Z}\omega-\omega(W\wedge\nabla_X Z)\\
\nonumber
&=(-1)^{(l-1)(k-1)}(-1)^{j(k-1)}L_Xi_{W\wedge Z}\omega-(-1)^{j(k-1)}\omega(\nabla_X(W\wedge Z))\\
\nonumber
&+(-1)^{j(k-1)}\omega(\nabla_X(W\wedge Z))-\omega(W\wedge\nabla_X Z)\\
\nonumber
&=(-1)^{j(k-1)}\nabla_X\omega(W\wedge Z)\\
\nonumber
&+(-1)^{j(k-1)}(\omega((\nabla_XW)\wedge Z)+(-1)^{j(k-1)}\omega(W\wedge \nabla_X Z))\\
\nonumber
&-\omega(W\wedge\nabla_X Z)\\
\nonumber
&=(-1)^{j(k-1)}\nabla_X\omega(W\wedge Z)+(-1)^{j(k-1)}\omega((\nabla_XW)\wedge Z)\\
\nonumber
&=(-1)^{j(k-1)}(i_W\nabla_X\omega(Z)+i_{\nabla_XW}\omega(Z)),
\end{align}
where Proposition \ref{InteriorProduct1} is used in the second equality, and (\ref{CovDerDiffForm}) and the fact that $\nabla$ is upper induced are used in the fourth equality.  This completes the proof. 
\end{proof}

\begin{theorem}
Let $\nabla$ be a torsion-free induced higher connection.  Then
\begin{equation}
\label{P924a}
\nabla_{X\wedge Y} \omega = (-1)^li_Y\nabla_X\omega+(-1)^{k(l-1)}i_X\nabla_Y\omega
\end{equation}
for $X\in A^k(M)$, $Y\in A^l(M)$, and $\omega\in \Omega^m(M)$.
\end{theorem}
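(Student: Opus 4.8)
The plan is to verify the identity by evaluating both sides on an arbitrary test multivector field. By Theorem \ref{CovDerDiffFormProp}, $\nabla_{X\wedge Y}\omega$ is an $(m-k-l+1)$-form; likewise $\nabla_X\omega\in\Omega^{m-k+1}(M)$ and $\nabla_Y\omega\in\Omega^{m-l+1}(M)$, so $i_Y\nabla_X\omega$ and $i_X\nabla_Y\omega$ also lie in $\Omega^{m-k-l+1}(M)$, and the degrees match. When $m-k-l+1<0$ both sides vanish, so I assume $m-k-l+1\ge 0$ and fix $Z\in A^{m-k-l+1}(M)$. I then expand $\nabla_{X\wedge Y}\omega(Z)$ using the defining formula (\ref{CovDerDiffForm}), Proposition \ref{LieDerivativeProp}(iv) to split $L_{X\wedge Y}(i_Z\omega)=(-1)^l i_Y L_X(i_Z\omega)+L_Y\big(i_X(i_Z\omega)\big)$, and the lower-induced identity (\ref{InducedHC1}) to write $\nabla_{X\wedge Y}Z=X\wedge\nabla_Y Z+(-1)^{kl}Y\wedge\nabla_X Z$.

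Next I expand the right-hand side. Writing $(i_Y\nabla_X\omega)(Z)=\nabla_X\omega(Y\wedge Z)$ and $(i_X\nabla_Y\omega)(Z)=\nabla_Y\omega(X\wedge Z)$, I apply (\ref{CovDerDiffForm}) again, use the upper-induced identity (\ref{InducedHC2}) on $\nabla_X(Y\wedge Z)$ and on $\nabla_Y(X\wedge Z)$, and use Proposition \ref{InteriorProduct1}(i) to rewrite composite interior products as interior products of wedges (e.g.\ $i_X(i_Z\omega)=i_{Z\wedge X}\omega$). After these expansions, the terms containing $\omega(X\wedge\nabla_Y Z)=(i_X\omega)(\nabla_Y Z)$ and $\omega(Y\wedge\nabla_X Z)=(i_Y\omega)(\nabla_X Z)$ appear with the same coefficients on both sides and cancel upon subtraction. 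Each side is then left with $L_X$-type terms, an $L_Y$-type term, and on the right the two terms $-(-1)^l(i_{\nabla_X Y}\omega)(Z)$ and $-(-1)^{k(l-1)}(i_{\nabla_Y X}\omega)(Z)$.

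To finish, I match these remaining pieces. The $L_Y$-terms agree after rewriting $i_{Z\wedge X}\omega=(-1)^{(m-k-l+1)k}i_{X\wedge Z}\omega$ and comparing signs. For the $L_X$-terms, the key step is Proposition \ref{LieDerivativeProp}(ii) with $\eta:=i_Z\omega$, which gives $i_Y L_X(i_Z\omega)=(-1)^{(k-1)l}L_X\big(i_Y(i_Z\omega)\big)-i_{[X,Y]}(i_Z\omega)$; after reordering interior products the first summand matches the corresponding $L_X$-term on the right, while the residual term, which equals $(-1)^{(k+l-1)(m-1)}(i_{[X,Y]}\omega)(Z)$ by Proposition \ref{InteriorProduct1}(i) and graded commutativity of the wedge, must be absorbed by the $i_{\nabla_X Y}$ and $i_{\nabla_Y X}$ terms. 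This is precisely where torsion-freeness is used: Definition \ref{HigherTorsion} with $T\equiv 0$ gives $[X,Y]=\nabla_X Y-(-1)^{(k-1)(l-1)}\nabla_Y X$, and substituting this closes the identity.

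The main obstacle is bookkeeping rather than ideas. Every expansion produces sign factors that are products of $k$, $l$, $m$ reduced mod $2$, and closing the computation requires checking a handful of congruences — for instance, the $L_X$-coefficients reconcile because $(k-1)l+(m-k-l+1)l\equiv l(m-1)$, and the $L_Y$-coefficients because $(k+l-1)(m-1)+(m-k-l+1)k\equiv(l-1)(m-1)+k(l-1)$ — while also carefully tracking the degree shift produced by each application of $i$, $L$, and $\nabla$. I expect the cleanest write-up to carry the test field $Z$ through from the outset, reduce each side to its surviving terms ($L_X$-type, $L_Y$-type, and the $i_{\nabla}$-type terms), and match them one family at a time, invoking torsion-freeness only at the last step.
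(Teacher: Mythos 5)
Your proposal follows essentially the same route as the paper's proof: evaluate both sides on a test field $Z\in A^{m-k-l+1}(M)$, expand via (\ref{CovDerDiffForm}), decompose the Lie derivative using Proposition \ref{LieDerivativeProp}(iv) and (ii) together with Proposition \ref{InteriorProduct1}(i), expand $\nabla_{X\wedge Y}Z$, $\nabla_X(Y\wedge Z)$, $\nabla_Y(X\wedge Z)$ with the induced identities, and kill the residual $[X,Y]$-term by torsion-freeness. The only differences are cosmetic (the paper applies (ii) and (iv) together at the outset rather than deferring (ii)), and you should also dispatch the trivial cases $k=0$ or $l=0$ via Theorem \ref{CovDerDiffFormProp2}(i), since (\ref{CovDerDiffForm}) is only defined for $k>0$.
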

\begin{proof}
For $m<k+l-1$, both sides of (\ref{P924a}) are zero.  In addition, for $k=0$ or $l=0$, (\ref{P924a}) follows from Theorem \ref{CovDerDiffFormProp2}-(i).  We now verify (\ref{P924a}) for $m\ge k+l-1$ with $k,l>0$. 

Let $Z\in A^t(M)$ where $t=m-k-l+1$.  Then 
\begin{align}
\label{P924b}
\nabla_{X\wedge Y}\omega(Z)=qL_{X\wedge Y} i_Z\omega-\omega(\nabla_{X\wedge Y} Z),
\end{align}
where $q:=(-1)^{(k+l-1)(m-1)}$.  Using (iv) and (ii) of Proposition \ref{LieDerivativeProp} and (i) of Proposition \ref{InteriorProduct1}, the first term in (\ref{P924b}) can be decomposed as 
\begin{align}
\label{P924c}
qL_{X\wedge Y} i_Z\omega=q(-1)^{kl}L_Xi_{Z\wedge Y}\omega-q(-1)^li_{Z\wedge [X,Y]}\omega+qL_Yi_{Z\wedge X}\omega.
\end{align}
From (\ref{CovDerDiffForm}), we have
\begin{align}
\label{P924d}
(-1)^{(k-1)(m-1)}L_Xi_{Z\wedge Y}\omega&=\nabla_X\omega(Z\wedge Y)+\omega(\nabla_X(Z\wedge Y))\\
\label{P924e}
(-1)^{(l-1)(m-1)}L_Y i_{Z\wedge X}\omega&=\nabla_Y\omega(Z\wedge X)+\omega(\nabla_Y(Z\wedge X)). 
\end{align}
Substituting (\ref{P924d}) and (\ref{P924e}) into (\ref{P924c}) and using the fact that 
\begin{equation}
\nonumber
q=(-1)^{(k-1)(m-1)}(-1)^{l(m-1)}=(-1)^{(l-1)(m-1)}(-1)^{k(m-1)}
\end{equation}
gives
\begin{align}
\nonumber
qL_{X\wedge Y} i_Z\omega&=(-1)^{l(m-1)}(-1)^{kl}\left(\nabla_X\omega(Z\wedge Y)+\omega(\nabla_X(Z\wedge Y))\right)\\
\nonumber
&+(-1)^{k(m-1)}\left(\nabla_Y\omega(Z\wedge X)+\omega(\nabla_Y(Z\wedge X))\right)\\
\label{P924f}
&-q(-1)^l\omega(Z\wedge [X,Y]).
\end{align}
Since $\nabla$ is induced, (\ref{P924f}) is further expanded as
\begin{align}
\nonumber
qL_{X\wedge Y} i_Z\omega&=(-1)^{l(m-1)}(-1)^{kl}\nabla_X\omega(Z\wedge Y)\\
\nonumber
&+(-1)^{l(m-1)}(-1)^{kl}\omega((\nabla_X Z)\wedge Y)\\
\nonumber
&+(-1)^{l(m-1)}(-1)^{kl}(-1)^{(k-1)t}\omega(Z\wedge \nabla_X Y)\\
\nonumber
&+(-1)^{k(m-1)}\nabla_Y\omega(Z\wedge X)\\
\nonumber
&+(-1)^{k(m-1)}\omega((\nabla_YZ)\wedge X))\\
\nonumber
&+(-1)^{k(m-1)}(-1)^{(l-1)t}\omega(Z\wedge \nabla_Y X)\\
\label{P924g}
&-q(-1)^l\omega(Z\wedge [X,Y]).
\end{align}
Swapping all the wedge products in (\ref{P924g}) with the appropriate signs gives
\begin{align}
\nonumber
qL_{X\wedge Y} i_Z\omega&=(-1)^{l}\nabla_X\omega(Y\wedge Z)+(-1)^{kl}\omega(Y\wedge \nabla_X Z)\\
\nonumber
&+(-1)^{l}\omega((\nabla_X Y)\wedge Z)+(-1)^{k(l-1)}\nabla_Y\omega(X\wedge Z)\\
\nonumber
&+\omega(X\wedge \nabla_YZ)+(-1)^{k(l-1)}\omega((\nabla_Y X)\wedge Z)\\
\label{P924h}
&-(-1)^l\omega([X,Y]\wedge Z).
\end{align}
(\ref{P924h}) can be rewritten as 
\begin{align}
\nonumber
qL_{X\wedge Y} i_Z\omega&=(-1)^{l}i_Y(\nabla_X\omega)(Z)+(-1)^{kl}\omega(Y\wedge \nabla_X Z)\\
\nonumber
&+(-1)^{k(l-1)}i_X(\nabla_Y\omega)(Z)+\omega(X\wedge \nabla_YZ)\\
\label{P924i}
&+(-1)^l\omega(T(X,Y)\wedge Z),
\end{align}
where $T$ is the higher torsion of $\nabla$.  Since $\nabla$ is torsion-free, the last term vanishes and we obtain
\begin{align}
\nonumber
qL_{X\wedge Y} i_Z\omega&=(-1)^{l}i_Y(\nabla_X\omega)(Z)+(-1)^{kl}\omega(Y\wedge \nabla_X Z)\\
\label{P924j}
&+(-1)^{k(l-1)}i_X(\nabla_Y\omega)(Z)+\omega(X\wedge \nabla_YZ).
\end{align}
Since $\nabla$ is induced, the second term in (\ref{P924b}) decomposes as
\begin{equation}
\label{P924k}
\omega(\nabla_{X\wedge Y} Z)=\omega(X\wedge \nabla_Y Z)+(-1)^{kl}\omega(Y\wedge \nabla_X Z).
\end{equation}
Subsituting (\ref{P924j}) and (\ref{P924k}) into (\ref{P924b}) proves (\ref{P924a}) for the case when $m\ge k+l-1$.  
\end{proof}

\section{Higher Connections and Associative Bilinear Forms}
\noindent Throughout this section, let $n:=\dim M$.
\begin{definition}
\label{DefBM}
Let $\mathcal{B}(M)$ be the set of all fiberwise $\mathbb{R}$-bilinear forms $\eta$ on $\wedge^\bullet TM$ such that 
\begin{itemize}
\item[(i)] $\eta(x\wedge y,z)=\eta(x,y\wedge z)$ for all $x\in \wedge^k T_pM$, $y\in \wedge^l T_pM$, $z\in \wedge^m T_pM$, $p\in M$ with $k,l,m\ge 0$,
\item[(ii)] $\eta$ is smooth in the sense that for all $X\in A^k(M)$, $Y\in A^l(M)$, the function $\eta(X,Y)(p):=\eta(X_p,Y_p)$ for $p\in M$ is smooth.
\end{itemize}
\end{definition}
\begin{corollary}
\label{CorBM1}
Let $\eta\in \mc{B}(M)$.  For all $p\in M$, $\eta$ satisfies
\begin{itemize}
\item[(a)] $\eta(x,y)=(-1)^{kl}\eta(y,x)$ for $x\in \wedge^k T_pM$, $y\in \wedge^l T_pM$
\item[(b)] $\eta(x,y)=0$ for $x\in \wedge^k T_pM$, $y\in \wedge^l T_pM$ with $k+l>n$.
\end{itemize}
\end{corollary}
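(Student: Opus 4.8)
The plan is to deduce both (a) and (b) directly from the associativity axiom (i) of Definition \ref{DefBM}, using the extreme cases where one of the three slots is a scalar. The key observation is that $\wedge^0 T_pM = \mathbb{R}$ sits inside $\wedge^\bullet T_pM$ as the identity for the wedge product, and that for a scalar $\lambda \in \mathbb{R}$ and any $u$ one has $\lambda \wedge u = u \wedge \lambda = \lambda u$; so associativity with a scalar in a corner slot relates $\eta$ evaluated on products to $\eta$ evaluated on factors.

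For part (b), suppose $x \in \wedge^k T_pM$ and $y \in \wedge^l T_pM$ with $k+l > n = \dim M$. Since $\wedge^j T_pM = 0$ for $j > n$, we have $x \wedge y = 0$ in $\wedge^{k+l} T_pM$. Applying axiom (i) with $z = 1 \in \wedge^0 T_pM$ gives
\begin{equation}
\nonumber
\eta(x,y) = \eta(x, y \wedge 1) = \eta(x \wedge y, 1) = \eta(0,1) = 0,
\end{equation}
using bilinearity in the last step. This handles (b).

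For part (a), the idea is to reduce to the decomposable case and count transpositions. First one checks it for $x = x_1 \wedge \cdots \wedge x_k$ and $y = y_1 \wedge \cdots \wedge y_l$ decomposable, with $x_i, y_j \in T_pM$; the general case then follows by bilinearity since decomposable multivectors span each $\wedge^j T_pM$. For the decomposable case, repeatedly apply axiom (i): moving one vector at a time,
\begin{equation}
\nonumber
\eta(x_1\wedge\cdots\wedge x_k,\; y_1\wedge\cdots\wedge y_l) = \eta(x_1\wedge\cdots\wedge x_{k-1},\; x_k \wedge y_1 \wedge \cdots \wedge y_l),
\end{equation}
and iterating pulls all $k$ vectors of $x$ across, yielding $\eta(1,\; x_1\wedge\cdots\wedge x_k\wedge y_1\wedge\cdots\wedge y_l)$; likewise one computes $\eta(y,x)$ as $\eta(1,\; y_1\wedge\cdots\wedge y_l\wedge x_1\wedge\cdots\wedge x_k)$. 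Since $x_1\wedge\cdots\wedge x_k\wedge y_1\wedge\cdots\wedge y_l = (-1)^{kl}\, y_1\wedge\cdots\wedge y_l\wedge x_1\wedge\cdots\wedge x_k$ in $\wedge^{k+l} T_pM$ (the standard sign for commuting a degree-$k$ element past a degree-$l$ element, obtained by $kl$ adjacent transpositions), bilinearity of $\eta$ in the second slot gives $\eta(x,y) = (-1)^{kl}\eta(y,x)$.

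I do not anticipate a serious obstacle here; the only point requiring minor care is bookkeeping the sign $(-1)^{kl}$ when reordering the wedge product, and making sure the reduction to decomposable multivectors is legitimate — which it is, since $\eta$ is bilinear and $\wedge^j T_pM$ is spanned by decomposables. One could alternatively prove (a) by first establishing the case $l = 1$ (or $k=1$) via a single application of axiom (i) together with the known graded-commutativity sign rules for the wedge product, and then bootstrapping by induction on $l$ using axiom (i) to peel off one factor of $y$ at a time; either route is routine.
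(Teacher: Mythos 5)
Your proof is correct and follows essentially the same route as the paper: both parts rest on using the associativity axiom with the scalar $1\in\wedge^0T_pM$ to rewrite $\eta(x,y)$ as $\eta(x\wedge y,1)$ (equivalently $\eta(1,x\wedge y)$), then invoking $x\wedge y=(-1)^{kl}\,y\wedge x$ together with bilinearity for (a), and $x\wedge y=0$ when $k+l>n$ for (b). The only difference is cosmetic: since axiom (i) of Definition \ref{DefBM} already holds for multivectors of arbitrary degrees, the paper does this in a single application with $z=1$, so your reduction to decomposables and vector-by-vector peeling is unnecessary, though harmless.
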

\begin{proof}
Let $1\in \wedge^0T_pM:=\mathbb{R}$.  For (a), we have
\begin{align}
\nonumber
\eta(x,y)&=\eta(x,y\wedge 1)\\
\nonumber
&=\eta(x\wedge y,1)\\
\nonumber
&=(-1)^{kl}\eta(y\wedge x,1)\\
\nonumber
&=(-1)^{kl}\eta(y,x\wedge 1)\\
\nonumber
&=(-1)^{kl}\eta(y,x).
\end{align}
For $k+l>n$, $x\wedge y=0$ and the second equality in the above calculation implies that $\eta(x,y)=0$.  This proves (b).     
\end{proof}
\begin{proposition}
\label{BMDiffFormProp}
There is a one to one correspondence between $\mathcal{B}(M)$ and  $\Omega^\bullet(M):=\bigoplus_{k=0}^n\Omega^k(M)$.  This correspondence is given by associating $\eta\in \mc{B}(M)$ with $\{\omega^{(k)}\}_{k=0}^n$ where 
\begin{itemize}
\item[(1)] $\omega^{(k)}\in \Omega^k(M)$, for $k=0,\dots, n$, 
\item[(2)] $\eta(x,y)=\omega^{(k+l)}(x\wedge y)$ $\forall ~x\in \wedge^kT_pM$, $y\in \wedge^lT_pM$, $p\in M$, $1\le k+l\le n$, and
\item[(3)] $\eta(x,y)=(xy)\omega_p^{(0)}$ $\forall~x,y\in \wedge^0 T_pM:=\mathbb{R}$, $p\in M$.
\end{itemize}
\end{proposition}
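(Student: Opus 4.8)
The plan is to build an explicit map $\Phi:\mathcal{B}(M)\to\Omega^\bullet(M)$, verify that it produces the data described in (1)--(3), and then check that it is a bijection by exhibiting an inverse. Given $\eta\in\mathcal{B}(M)$ and $p\in M$, for $1\le j\le n$ I would set $\omega^{(j)}_p(v):=\eta(v,1)$ for $v\in\wedge^j T_pM$, where $1\in\wedge^0T_pM:=\mathbb{R}$, and set $\omega^{(0)}_p:=\eta(1,1)\in\mathbb{R}$. Bilinearity of $\eta$ makes $v\mapsto\omega^{(j)}_p(v)$ linear, hence an element of $(\wedge^j T_pM)^\ast$, which is canonically $\wedge^j T^\ast_pM$ by Proposition \ref{VectorCovectorProp}; smoothness of the resulting form follows from condition (ii) of Definition \ref{DefBM} applied to an arbitrary $X\in A^j(M)$ paired with the constant multivector field $1\in A^0(M)$. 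This gives $\omega^{(j)}\in\Omega^j(M)$ for every $j$, which is (1).

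Properties (2) and (3) are then read off using associativity and bilinearity. For $x\in\wedge^k T_pM$ and $y\in\wedge^l T_pM$ with $1\le k+l\le n$, condition (i) of Definition \ref{DefBM} gives $\eta(x,y)=\eta(x,y\wedge 1)=\eta(x\wedge y,1)=\omega^{(k+l)}_p(x\wedge y)$, which is (2); and for $x,y\in\wedge^0 T_pM$, bilinearity gives $\eta(x,y)=(xy)\,\eta(1,1)=(xy)\,\omega^{(0)}_p$, which is (3). The same two identities show $\Phi$ is injective: if $\Phi(\eta)=\Phi(\eta')$ then $\eta$ and $\eta'$ agree on all pairs of homogeneous elements of total degree at most $n$, while by Corollary \ref{CorBM1}(b) both vanish on homogeneous pairs of total degree exceeding $n$; since $\wedge^\bullet T_pM=\bigoplus_{k=0}^n\wedge^k T_pM$, bilinearity forces $\eta=\eta'$.

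For surjectivity I would reconstruct $\eta$ from a given family $\{\omega^{(k)}\}_{k=0}^n$ with $\omega^{(k)}\in\Omega^k(M)$. Define, on homogeneous elements $x\in\wedge^k T_pM$, $y\in\wedge^l T_pM$, $\eta_p(x,y):=\omega^{(k+l)}_p(x\wedge y)$ when $1\le k+l\le n$, $\eta_p(x,y):=(xy)\,\omega^{(0)}_p$ when $k=l=0$, and $\eta_p(x,y):=0$ when $k+l>n$; then extend $\eta_p$ to all of $\wedge^\bullet T_pM$ by bilinearity. Bilinearity is built in, and smoothness follows from smoothness of each $\omega^{(k)}$ and of the wedge product of multivector fields. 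Associativity only needs to be checked on homogeneous $x,y,z$ of degrees $k,l,m$, by multilinearity: if $1\le k+l+m\le n$ then both $\eta(x\wedge y,z)$ and $\eta(x,y\wedge z)$ equal $\omega^{(k+l+m)}_p(x\wedge y\wedge z)$ by associativity of the wedge product (degree-$0$ factors being absorbed as scalars); if $k+l+m=0$ both equal $xyz\,\omega^{(0)}_p$; and if $k+l+m>n$ both vanish, either because some intermediate wedge product is already zero or because the top clause in the definition applies. Hence $\eta\in\mathcal{B}(M)$, and by construction $\Phi(\eta)=\{\omega^{(k)}\}$, so $\Phi$ is onto.

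The only step requiring real care is the associativity verification in the surjectivity part, where one must chase the boundary cases: a factor of degree $0$ turns a wedge product into ordinary multiplication, and one must confirm that $\eta(x\wedge y,z)$ and $\eta(x,y\wedge z)$ fall into the same clause of the definition when the total degree exceeds $n$. None of this is conceptually difficult; the rest of the argument is bookkeeping with bilinearity and the associativity axiom.
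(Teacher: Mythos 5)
Your proposal is correct and follows essentially the same route as the paper: both directions use the identical constructions ($\omega^{(k)}_p(x)=\eta(x,1)$, $\omega^{(0)}_p=\eta(1,1)$ one way, and $\eta(x,y)=\omega^{(k+l)}_p(x\wedge y)$ with the degree-zero and degree-exceeding-$n$ clauses the other way), with the associativity check on homogeneous elements. The only difference is presentational: you verify the bijection via injectivity (using Corollary \ref{CorBM1}(b)) and surjectivity, where the paper simply exhibits the two maps and asserts they are mutually inverse.
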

\begin{proof}
Let $\omega\in \Omega^\bullet(M)$ and decompose $\omega$ as 
\begin{equation}
\nonumber
\omega=\omega^{(0)}+\omega^{(1)}+\cdots +\omega^{(n)},
\end{equation}
where $\omega^{(k)}\in \Omega^k(M)$. For $p\in M$, $x\in \wedge^k T_pM$, $y\in \wedge^lT_pM$ with $1\le k+l\le n$, $\eta$ is defined via
\begin{equation}
\nonumber
\eta(x,y):=\omega^{(k+l)}_p(x\wedge y).
\end{equation}
Then for $z\in \wedge^m T_pM$ with $1 \le k+l+m\le n$, we have
\begin{align}
\nonumber
\eta(x\wedge y,z)&=\omega^{(k+l+m)}_p((x\wedge y)\wedge z)\\
\nonumber
&=\omega^{(k+l+m)}_p(x\wedge (y\wedge z))\\
\nonumber
&=\eta(x,y\wedge z).
\end{align} 
For $p\in M$, $x,y\in \wedge^0T_pM:=\mathbb{R}$, $\eta$ is defined by $\eta(x,y):=(xy)\omega^{(0)}_p$.  For $z\in \wedge^0T_pM:=\mathbb{R}$, we have 
\begin{equation}
\nonumber
\eta(xy,z):=((xy)z)\omega^{(0)}_p=(x(yz))\omega^{(0)}_p=\eta(x,yz).
\end{equation}
For $x\in \wedge^kT_pM$, $y\in \wedge^lT_pM$, $p\in M$ with $k+l>n$, we set $\eta(x,y):=0$.  This proves (i) of Definition \ref{DefBM}.  (ii) of Definition \ref{DefBM} follows from the fact that $\omega$ is smooth.  

Now let $\eta\in \mathcal{B}(M)$.  For $p\in M$, $x\in \wedge^kT_pM$ with $k\ge 1$, we define
\begin{equation}
\nonumber
\omega^{(k)}_p(x):=\eta(x,1),
\end{equation}
where we identify $\omega_p^{(k)}$ with an element of $\wedge^k T^\ast_pM$ via the natural isomorphism $(\wedge^k T_pM)^\ast\simeq \wedge^k T^\ast_pM$.  For $k=0$, we define $\omega_p^{(0)}:=\eta(\mathbf{1},\mathbf{1})(p)$ where $\mathbf{1}\in A^0(M)=C^\infty(M)$ is the constant function $p\mapsto 1\in \mathbb{R}$.  (ii) of Definition \ref{DefBM} implies that $\omega^{(k)}$ is smooth.  

Let $\varphi: \Omega^\bullet(M)\rightarrow \mathcal{B}(M)$ and $\psi: \mathcal{B}(M)\rightarrow \Omega^\bullet(M)$ be the two mappings constructed above.  Its clear then that $\psi\circ \varphi=id_{\Omega^\bullet(M)}$ and $\varphi\circ \psi=id_{\mathcal{B}(M)}$.  This proves the proposition.   
\end{proof}
\noindent We now give a necessary and sufficient condition for $\eta\in \mc{B}(M)$ to be nondegenerate on the fibers of $\wedge^\bullet TM$, that is, for any $p\in M$ and any $v\in \wedge^\bullet T_pM$, we have
\begin{equation}
\nonumber
\eta(u,v)=0~\forall~u\in \wedge^\bullet T_pM \Longleftrightarrow ~v=0.
\end{equation}

\begin{proposition}
\label{NondegenBM}
Let $\eta\in \mc{B}(M)$ and let $\{\omega^{(k)}\}_{k=0}^n$ be the set of differential forms associated with $\eta$ by Propositon \ref{BMDiffFormProp}.  Then $\eta$ is nondegenerate on the fibers of $\wedge^\bullet TM$ iff $\omega^{(n)}$ is a volume form.  
\end{proposition}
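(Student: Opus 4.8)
The plan is to prove both implications by a direct computation that isolates the top-degree component $\omega^{(n)}$, using the explicit dictionary of Proposition \ref{BMDiffFormProp} together with Corollary \ref{CorBM1}(b). The only nontrivial input is the following consequence of Proposition \ref{Multilinear3} (its proof already exhibits the pairing that is needed): if $\omega^{(n)}_p \ne 0$, then for every $0 \le j \le n$ and every nonzero $v \in \wedge^j T_pM$ there is a $u \in \wedge^{n-j} T_pM$ with $u \wedge v \ne 0$, and since $\wedge^n T_pM$ is one-dimensional and $\omega^{(n)}_p$ is a nonzero functional on it, this forces $\omega^{(n)}_p(u \wedge v) \ne 0$. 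If $\dim M = 0$ the statement is immediate (then $\eta(u,v) = (uv)\omega^{(0)}_p$ by clause (3) of Proposition \ref{BMDiffFormProp}), so throughout I assume $n \ge 1$.

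For the implication ``$\omega^{(n)}$ a volume form $\Rightarrow$ $\eta$ nondegenerate'', fix $p \in M$ and a nonzero $v \in \wedge^\bullet T_pM$, write $v = v^{(0)} + \cdots + v^{(n)}$ with $v^{(j)} \in \wedge^j T_pM$, and let $j_0 = \min\{ j : v^{(j)} \ne 0\}$. Using the input above I pick $u \in \wedge^{n - j_0} T_pM$ with $\omega^{(n)}_p(u \wedge v^{(j_0)}) \ne 0$. I then expand $\eta(u, v) = \sum_{j} \eta(u, v^{(j)})$: the summands with $j < j_0$ vanish since $v^{(j)} = 0$; the summands with $j > j_0$ vanish by Corollary \ref{CorBM1}(b), because $u \wedge v^{(j)}$ has degree $(n - j_0) + j > n$; and the surviving summand is $\eta(u, v^{(j_0)}) = \omega^{(n)}_p(u \wedge v^{(j_0)}) \ne 0$ by clause (2) of Proposition \ref{BMDiffFormProp}, which applies because $(n - j_0) + j_0 = n$ lies in $\{1, \dots, n\}$. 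Hence $\eta(u,v) \ne 0$, so $\eta$ is nondegenerate on every fiber. (The boundary cases $j_0 = 0$ and $j_0 = n$ are instances of the same formula; e.g. for $j_0 = 0$ one takes any $u \in \wedge^n T_pM$ with $\omega^{(n)}_p(u) \ne 0$, and for $j_0 = n$ one takes $u = 1 \in \wedge^0 T_pM$.)

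For the converse I argue by contraposition: if $\omega^{(n)}_p = 0$ for some $p$, choose any nonzero $v \in \wedge^n T_pM$ (possible since $n \ge 1$). For an arbitrary $u = \sum_k u^{(k)}$, the term $\eta(u^{(k)}, v)$ vanishes for $k \ge 1$ by Corollary \ref{CorBM1}(b) (degree $k + n > n$), while the $k = 0$ term equals $u^{(0)}\,\omega^{(n)}_p(v) = 0$ by clause (2) of Proposition \ref{BMDiffFormProp}. Thus $\eta(u, v) = 0$ for every $u$ although $v \ne 0$, so $\eta$ is degenerate on the fiber over $p$; equivalently, nondegeneracy of $\eta$ forces $\omega^{(n)}_p \ne 0$ for all $p$. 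I do not anticipate a real obstacle, since the argument is essentially bookkeeping of degrees; the one place to be careful is confirming that Proposition \ref{Multilinear3} (with a single vector, $t = 1$) together with one-dimensionality of $\wedge^n T_pM$ really supplies the perfect-pairing input in the generality needed, and that the extreme degrees $j_0 \in \{0, n\}$ are genuinely handled by clause (2) rather than clause (3) of Proposition \ref{BMDiffFormProp}.
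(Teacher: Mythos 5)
Your proposal is correct and takes essentially the same route as the paper: the substantive direction uses Proposition \ref{Multilinear3} (with a single vector) applied to the lowest-degree component of $v$, kills the higher-degree terms via Corollary \ref{CorBM1}(b), and evaluates the surviving term with clause (2) of Proposition \ref{BMDiffFormProp}, exactly as in the paper's proof. Your contrapositive treatment of the other direction is just the paper's direct argument (pairing a nonzero top-degree $v$ against $\mathbf{1}$, all positive-degree pairings vanishing) written out more explicitly, so there is no gap.
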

\begin{proof}
$(\Rightarrow)$ Suppose $\eta$ is nondegenerate on the fibers of $\wedge^\bullet TM$.  Let $p$ be any element of $M$ and let $v$ be any nonzero element of $\wedge^n T_pM$.  Since $\eta$ is nondegenerate, we have 
\begin{equation}
\nonumber
\omega^{(n)}_p(v)=\omega^{(n)}_p(v\wedge 1)=\eta(v,1)\neq 0.
\end{equation}
Since $p\in M$ was arbitrary, this proves that $\omega^{(n)}$ is a volume form.

($\Leftarrow$) Suppose $\omega^{(n)}$ is a volume form.  Let $p$ be any element of $M$ and let $v$ be any nonzero element in $\wedge^\bullet T_pM$.  Then $v$ can be decomposed as
\begin{equation}
\nonumber
v=v^{(0)}+v^{(1)}+\cdots + v^{(n)},
\end{equation}
where $v^{(i)}\in \wedge^iT_pM$ for $i=0,\dots, n$.  Since $v\neq 0$, there exists some $i$ such that $v^{(i)}\neq 0$. Let $k:=\min\{i~|~v^{(i)}\neq 0\}$.  By Propositon \ref{Multilinear3}, there exists an $(n-k)$-vector $u\in \wedge^{(n-k)}T_pM$ such that $u\wedge v^{(k)}\in \wedge^nT_pM$ is nonzero.  For $i>k$, we clearly have $u\wedge v^{(i)}  =0$.  This gives
\begin{align}
\nonumber
\eta(u,v)&=\eta(u\wedge v,1)\\
\nonumber
&=\omega^{(n)}(u\wedge v^{(k)})\\
\nonumber
&\neq 0,
\end{align}
where the last line follows from the fact that $\wedge^nT_pM$ is a 1-dimensional space generated by $u\wedge v^{(k)}\neq 0$ and $\omega^{(n)}$ is non-vanishing.  This completes the proof. 
\end{proof}

\begin{corollary}
\label{FrobeniusAlgCor}
Let $\eta\in \mc{B}(M)$ and let $\{\omega^{(k)}\}$ be the set of differential forms associated with $\eta$ by Proposition \ref{BMDiffFormProp}.  Then $(\wedge^\bullet T_pM,\eta_p)$ is a supercommutative Frobenius algebra for all $p\in M$ iff $\omega^{(n)}$ is a volume form.
\end{corollary}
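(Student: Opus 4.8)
The plan is to note that, apart from nondegeneracy, every ingredient of a supercommutative Frobenius algebra is already present in the setup and holds at every point $p\in M$ independently of the choice of $\eta$, so that the corollary reduces immediately to Proposition \ref{NondegenBM}.

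First I would pin down the definition being used: a (finite-dimensional) supercommutative Frobenius algebra over $\mathbb{R}$ is a $\mathbb{Z}$-graded (equivalently $\mathbb{Z}/2$-graded) associative unital algebra $A$ which is supercommutative, i.e. $ab=(-1)^{|a||b|}ba$ on homogeneous elements, together with a nondegenerate bilinear form $\eta$ that is invariant, $\eta(ab,c)=\eta(a,bc)$, and supersymmetric, $\eta(a,b)=(-1)^{|a||b|}\eta(b,a)$. In particular no evenness of $\eta$ is imposed, which is the correct convention here since the forms $\omega^{(k)}$ of odd degree associated with $\eta$ by Proposition \ref{BMDiffFormProp} need not vanish.

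Next I would verify the algebra axioms for $(\wedge^\bullet T_pM,\wedge)$ at an arbitrary $p\in M$: it is finite-dimensional, associative, has unit $1\in\wedge^0 T_pM=\mathbb{R}$, and is supercommutative with respect to its natural grading because $x\wedge y=(-1)^{kl}y\wedge x$ for $x\in\wedge^k T_pM$, $y\in\wedge^l T_pM$; none of this involves $\eta$. Then I would record the two algebraic properties of $\eta_p$ not involving nondegeneracy: invariance is precisely Definition \ref{DefBM}(i), and supersymmetry is precisely Corollary \ref{CorBM1}(a). Again both hold at every $p$.

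Finally, combining these observations, $(\wedge^\bullet T_pM,\eta_p)$ is a supercommutative Frobenius algebra for every $p\in M$ if and only if $\eta_p$ is nondegenerate for every $p$, i.e. if and only if $\eta$ is nondegenerate on every fiber of $\wedge^\bullet TM$, and by Proposition \ref{NondegenBM} this is equivalent to $\omega^{(n)}$ being a volume form. I do not expect a genuine obstacle; the only point to handle carefully is matching the convention for ``supercommutative Frobenius algebra'' — in particular not demanding that $\eta$ be an even form — so that the pairing coming from the full inhomogeneous form $\sum_k\omega^{(k)}$ really does fit the definition.
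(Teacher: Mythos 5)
Your proposal is correct and follows essentially the same route as the paper: both arguments observe that the exterior algebra structure, the invariance $\eta(x\wedge y,z)=\eta(x,y\wedge z)$, and the (super)commutativity hold automatically at every point, so the only issue is nondegeneracy of $\eta_p$, which Proposition \ref{NondegenBM} identifies with $\omega^{(n)}$ being a volume form. Your extra remark that graded symmetry of $\eta_p$ comes from Corollary \ref{CorBM1}(a) is a harmless refinement of the paper's convention, which only demands a nondegenerate invariant form plus supercommutativity of the algebra itself.
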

\begin{proof}
Recall that a Frobenius algebra is a finite dimensional unital associative algebra $A$ with a nondegenerate bilinear form $\langle\cdot,\cdot\rangle$ satisfying
\begin{equation}
\label{FrobeniusAssociative}
\langle ab,c\rangle =\langle a, bc\rangle\hspace*{0.1in} \forall a,b,c\in A.
\end{equation}
Note that $\wedge^\bullet T_pM$ is naturally a finite dimensional unital associative algebra under the exterior product.  In addition, the bilinear form $\eta_p$ satisfies the associativity condition of  (\ref{FrobeniusAssociative}).  For $(\wedge^\bullet T_pM,\eta_p)$ to be a Frobenius algbera, we only need $\eta_p$ to be nondgenerate.  By Proposition \ref{NondegenBM}, this happens precisely when $\omega^{(n)}$ is a volume form.  Lastly, note that $\wedge^\bullet T_pM$ is naturally a $\mathbb{Z}_2$-graded algebra via
\begin{equation}
\nonumber
\wedge^\bullet T_pM=\left(\wedge^\bullet T_pM\right)_0\oplus (\wedge^\bullet T_pM)_1
\end{equation}
where 
\begin{equation}
\nonumber
\left(\wedge^\bullet T_pM\right)_0:=\bigoplus_{k=0}\wedge^{2k} T_pM,\hspace*{0.2in}\left(\wedge^\bullet T_pM\right)_1:=\bigoplus_{k=0}\wedge^{2k+1} T_pM.
\end{equation}
With the above $\mathbb{Z}_2$-grading, the relation $x\wedge y=(-1)^{kl}y\wedge x$ for $x\in \wedge^kT_pM$ and $y\in \wedge^l T_pM$ is precisely the conditon of supercommutativity.  
\end{proof}

\begin{proposition}
\label{closedBM}
Let $\eta\in \mc{B}(M)$ and let $\{\omega^{(k)}\}_{k=0}^n$ be the set of differential forms associated with $\eta$ by Proposition \ref{BMDiffFormProp}. If $d\omega^{(k)}=0$ for all $k$, then for any decomposable $(k+1)$-vector field $X=X_1\wedge \cdots\wedge X_{k+1}$, $\eta$ satisfies
\begin{equation}
\label{closedBMFormula}
\sum_{i=1}^{k+1} (-1)^i X_i(\eta(X[i],\textbf{1}))=\sum_{1\le i<j\le k+1} (-1)^{i+j}\eta([X_i,X_j],X[i,j])
\end{equation}
where $\textbf{1}$ denotes the function on $M$ whose value is always $1\in\mathbb{R}$ and 
\begin{align}
\nonumber
X[i]&:=X_1\wedge \cdots \wedge \widehat{X_i}\wedge \cdots \wedge X_{k+1}\\
\nonumber
X[i,j]&:=X_1\wedge \cdots \wedge \widehat{X_i}\wedge \cdots\wedge\widehat{X_j}\wedge \cdots  \wedge X_{k+1}.
\end{align}
In addition, if $M$ is connected, then $\eta(\textbf{1},\textbf{1})$ is constant on $M$.  
\end{proposition}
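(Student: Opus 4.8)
The strategy is to recognize (\ref{closedBMFormula}) as the coordinate-free (Koszul) formula for the exterior derivative of the $k$-form $\omega^{(k)}$, read through the dictionary of Proposition \ref{BMDiffFormProp} and combined with the hypothesis $d\omega^{(k)}=0$.

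First I would record the translation between $\eta$ and the forms $\omega^{(j)}$ on the arguments that occur. For the decomposable field $X=X_1\wedge\cdots\wedge X_{k+1}$, each $X[i]$ is a $k$-vector field, so taking the second slot to be $\mathbf{1}\in\wedge^0 T_pM$, item (2) of Proposition \ref{BMDiffFormProp} gives $\eta(X[i],\mathbf{1})=\omega^{(k)}(X[i]\wedge 1)=\omega^{(k)}(X[i])$; for $k=0$ this reads $\eta(\mathbf{1},\mathbf{1})=\omega^{(0)}$ by item (3). Similarly $[X_i,X_j]$ is a $1$-vector field and $X[i,j]$ is a $(k-1)$-vector field, so $\eta([X_i,X_j],X[i,j])=\omega^{(k)}([X_i,X_j]\wedge X[i,j])$. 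Finally, in the pairing convention fixed by (\ref{kVectorField1}) and Proposition \ref{VectorCovectorProp}, a $k$-form evaluated on a decomposable $k$-vector agrees with its value on the corresponding ordered tuple, so $\omega^{(k)}(X[i])=\omega^{(k)}(X_1,\dots,\widehat{X_i},\dots,X_{k+1})$ and $\omega^{(k)}([X_i,X_j]\wedge X[i,j])=\omega^{(k)}([X_i,X_j],X_1,\dots,\widehat{X_i},\dots,\widehat{X_j},\dots,X_{k+1})$, with no extra sign.

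Next, for $k\ge 1$ I would invoke the standard invariant formula for $d$ applied to $\omega^{(k)}\in\Omega^k(M)$ and the vector fields $X_1,\dots,X_{k+1}$:
\begin{align*}
d\omega^{(k)}(X_1,\dots,X_{k+1})&=\sum_{i=1}^{k+1}(-1)^{i+1}X_i\!\left(\omega^{(k)}(X_1,\dots,\widehat{X_i},\dots,X_{k+1})\right)\\
&\quad+\sum_{1\le i<j\le k+1}(-1)^{i+j}\omega^{(k)}([X_i,X_j],X_1,\dots,\widehat{X_i},\dots,\widehat{X_j},\dots,X_{k+1}).
\end{align*}
Since $d\omega^{(k)}=0$ the left side vanishes; rewriting this as $\sum_i(-1)^i X_i(\omega^{(k)}(X[i]))=\sum_{i<j}(-1)^{i+j}\omega^{(k)}([X_i,X_j]\wedge X[i,j])$ and substituting the identifications from the first step gives precisely (\ref{closedBMFormula}). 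The case $k=0$ uses only $d\omega^{(0)}=0$: then $X=X_1$, the wedge $X[1]$ is empty so $\eta(X[1],\mathbf{1})=\omega^{(0)}$, the right-hand sum of (\ref{closedBMFormula}) is vacuous, and the claim reduces to $-X_1(\omega^{(0)})=-d\omega^{(0)}(X_1)=0$.

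For the last sentence, $d\omega^{(0)}=0$ says the function $\eta(\mathbf{1},\mathbf{1})=\omega^{(0)}$ has vanishing differential, hence is constant on each component of $M$, hence constant when $M$ is connected; alternatively this is the $k=0$ instance of (\ref{closedBMFormula}) just established. The only place where care is needed is matching the sign conventions in the Koszul formula with the exponents $(-1)^i$ and $(-1)^{i+j}$ in (\ref{closedBMFormula}); I would pin these down by first checking $k=1$, where the identity is the familiar $d\omega(X_1,X_2)=X_1(\omega(X_2))-X_2(\omega(X_1))-\omega([X_1,X_2])$. There is no analytic or deeper algebraic obstruction.
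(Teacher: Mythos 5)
Your proposal is correct and follows essentially the same route as the paper: translate $\eta(X[i],\mathbf{1})$ and $\eta([X_i,X_j],X[i,j])$ into values of $\omega^{(k)}$ via Proposition \ref{BMDiffFormProp}, apply the invariant (Koszul) formula for $d\omega^{(k)}$, and use $d\omega^{(k)}=0$; the constancy of $\eta(\mathbf{1},\mathbf{1})$ on connected $M$ likewise comes from $d\omega^{(0)}=0$ exactly as in the paper. Your extra care with the $k=0$ case and the sign check at $k=1$ is harmless refinement of the same argument.
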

\begin{proof}
This follows from the invariant formula for the exterior derivative \cite{Lee}.  Specifically,
\begin{align}
\nonumber
d\omega^{(k)}(X)&=\sum_{i=1}^{k+1} (-1)^{i-1}X_i(\omega^{(k)}(X[i]))+\sum_{1\le i<j\le k+1}(-1)^{i+j}\omega^{(k)}([X_i,X_j]\wedge X[i,j])\\
\nonumber
&=\sum_{i=1}^{k+1} (-1)^{i-1}X_i(\eta(X[i],\textbf{1}))+\sum_{1\le i<j\le k+1}(-1)^{i+j}\eta([X_i,X_j], X[i,j]).
\end{align}
The first part then follows from the fact that $d\omega^{(k)}=0$.  For the second part, suppose that $M$ is connected.  Then for any $Y\in A^1(M)$, we have
\begin{align}
d\omega^{(0)}(Y)=Y\omega^{(0)}=Y\eta(\textbf{1},\textbf{1}).
\end{align}
Since $d\omega^{(0)}=0$, it follows that $\eta(\textbf{1},\textbf{1})$ is locally constant on $M$.   Since $M$ is connected, $\eta(\textbf{1},\textbf{1})$  must be constant on all of $M$.
\end{proof}

Using Proposition \ref{BMDiffFormProp}, there is a natural way to define the covariant derivative of $\eta$ with respect to a higher connection $\nabla$.  For $X\in A^k(M)$, $Y\in A^l(M)$, and $Z\in A^m(M)$ (with $k+l+m\le n+1$), we define 
\begin{align}
\label{CovDerBM}
(\nabla_X\eta)(Y,Z):=(\nabla_X\omega^{(t)})(Y\wedge Z).
\end{align}
where $t=k+l+m-1$ and $\{\omega^{(i)}\}$ is the set of differential forms associated with $\eta$ by Proposition \ref{BMDiffFormProp}.  
\begin{definition}
\label{ParallelBMDef}
Let $\eta\in \mc{B}(M)$ and let $\nabla$ be a higher connection on $M$.  $\eta$ is \textit{parallel} with respect to $\nabla$ if $(\nabla_X\eta)(Y,Z)=0$ for all $X\in A^k(M)$, $Y\in A^l(M)$, and $Z\in A^m(M)$, where $k>0$ and $l+m>0$.  This condition is denoted symbolically by $\nabla\eta\equiv 0$.
\end{definition}
\begin{remark}
Note that when $l+m=0$ in Definition \ref{ParallelBMDef}, we have $Y,Z\in C^\infty(M)$.  Let $f:=Y$ and $g:=Z$. Then for $X\in A^k(M)$ $(k>0)$, Proposition \ref{P925} implies that
\begin{align}
\nonumber
(\nabla_X\eta)(f,g)&=(\nabla_X\omega^{(k-1)})(fg)\\
\nonumber
&=fg(\nabla_X\omega^{(k-1)})\\
\nonumber
&=fgL_X\omega^{(k-1)}.
\end{align}
In other words, when $l+m=0$, $(\nabla_X\eta)(f,g)$ is completely determined by the Lie derivative of $\omega^{(k-1)}$ along $X$ with no contribution from the higher connection.
\end{remark}
\noindent The next result gives a necessary and sufficient condition for $\nabla \eta\equiv 0$:  
\begin{proposition}
\label{ParallelBMProp}
Let $\eta\in \mc{B}(M)$ and let $\nabla$ be a higher connection on $M$.  In addition, let $\{\omega^{(i)}\}$ be the set of differential forms associated with $\eta$ by Proposition \ref{BMDiffFormProp} and let $(\widetilde{\nabla},\{F^{k,l}\})$ be the unique pair associated with $\nabla$ by Theorem \ref{HigherConnectionData}.  Then $\nabla\eta\equiv 0$ (in the sense of Definition \ref{ParallelBMDef}) iff
\begin{equation}
\label{ParallelBMProp1}
(\widetilde{\nabla}_X\omega^{(t)})(Y)=\omega^{(t)}(F^{k,l}(X,Y))
\end{equation}
for all $X\in A^k(M)$, $Y\in A^l(M)$ with $k,l>0$, where $t=k+l-1\le n:=\dim M$.
\end{proposition}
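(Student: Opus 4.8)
The plan is to reduce the entire statement to one \emph{difference formula} comparing how $\nabla$ and how $\widetilde\nabla$ (viewed as the induced higher connection it generates) act on a differential form, and then to obtain each implication by a single specialization of the multivector arguments. Throughout one works in the range $k+l+m\le n+1$ built into (\ref{CovDerBM}), so that every $\omega^{(t)}$ and every $F^{k,l}$ appearing below is in the relevant range.

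The first step is to record that, for $\omega\in\Omega^t(M)$, $X\in A^k(M)$ with $k>0$, and $Y'\in A^{t-k+1}(M)$,
\begin{equation}
\nonumber
(\nabla_X\omega)(Y')=(\widetilde\nabla_X\omega)(Y')-\omega\bigl(F^{k,t-k+1}(X,Y')\bigr).
\end{equation}
This is immediate from (\ref{CovDerDiffForm}): the term $(-1)^{(k-1)(t-1)}L_Xi_{Y'}\omega$ does not involve the connection, so it is the same for $\nabla$ and $\widetilde\nabla$, while $\omega(\nabla_XY')$ and $\omega(\widetilde\nabla_XY')$ differ by exactly $\omega(F^{k,t-k+1}(X,Y'))$ by Theorem \ref{HigherConnectionData}.

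For the forward implication, assume $\nabla\eta\equiv0$ and fix $X\in A^k(M)$, $Y\in A^l(M)$ with $k,l>0$ and $t:=k+l-1\le n$. I would evaluate $\nabla\eta$ on $Y$ and on the constant function $\mathbf 1\in A^0(M)$: this is a legitimate instance of Definition \ref{ParallelBMDef} since $k>0$ and $l+0>0$, and since $Y\wedge\mathbf 1=Y$ the defining equation (\ref{CovDerBM}) gives $(\nabla_X\omega^{(t)})(Y)=(\nabla_X\eta)(Y,\mathbf 1)=0$. Inserting this into the difference formula produces $(\widetilde\nabla_X\omega^{(t)})(Y)=\omega^{(t)}(F^{k,l}(X,Y))$, which is (\ref{ParallelBMProp1}).

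For the converse, assume (\ref{ParallelBMProp1}) and fix $X\in A^k(M)$ with $k>0$, $Y\in A^l(M)$, $Z\in A^m(M)$ with $l+m>0$. Set $W:=Y\wedge Z\in A^{l+m}(M)$; then $l+m>0$ and $t:=k+(l+m)-1\le n$, so applying (\ref{ParallelBMProp1}) with $W$ in the role of its second argument gives $(\widetilde\nabla_X\omega^{(t)})(W)=\omega^{(t)}(F^{k,l+m}(X,W))$, and the difference formula forces $(\nabla_X\omega^{(t)})(W)=0$. By (\ref{CovDerBM}) this says $(\nabla_X\eta)(Y,Z)=0$, hence $\nabla\eta\equiv0$. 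I do not anticipate a genuine obstacle here: once the difference formula is established, each direction is a one-line specialization, and the only points needing care are the degree bookkeeping (checking that the inserted multivector has degree $t-k+1$ in each case) and the cosmetic fact that the sign $(-1)^{(k-1)(t-1)}$ in (\ref{CovDerDiffForm}) cancels between $\nabla$ and $\widetilde\nabla$. Conceptually, the point is just that the choice $Z=\mathbf 1$ converts the two-argument condition on $\eta$ into a one-argument statement about $\omega^{(t)}$, while a product $Y\wedge Z$ is already a single multivector field, so the two formulations carry the same information.
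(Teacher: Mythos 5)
Your proposal is correct and follows essentially the same route as the paper: your ``difference formula'' is exactly the paper's chain of equalities $(\nabla_X\eta)(Y,\mathbf 1)=(\widetilde{\nabla}_X\omega^{(t)})(Y)-\omega^{(t)}(F^{k,l}(X,Y))$ obtained from (\ref{CovDerDiffForm}) and Theorem \ref{HigherConnectionData}, and your two specializations ($Z=\mathbf 1$ for the forward direction, $W=Y\wedge Z$ for the converse) are precisely the paper's reduction ``it suffices to consider $Z=\mathbf 1$.'' No gaps; the degree bookkeeping you flag is handled the same way in the paper.
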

\begin{proof}
From the definition of $(\nabla_X\eta)(Y,Z)$ it suffices to consider the case where $Y\in A^l(M)$ with $l>0$ and $Z=\textbf{1}\in \Omega^0(M):=C^\infty(M)$, where $\textbf{1}$ is the function whose value is always $1\in \mathbb{R}$.  Then
\begin{align}
\nonumber
(\nabla_X\eta)(Y,\textbf{1})&=(\nabla_X\omega^{(t)})(Y\wedge \textbf{1})\\
\nonumber
&=(\nabla_X\omega^{(t)})(Y)\\
\nonumber
&=(-1)^{(k-1)(t-1)}L_Xi_Y\omega^{(t)}-\omega^{(t)}(\nabla_XY)\\
\nonumber
&=(-1)^{(k-1)(t-1)}L_Xi_Y\omega^{(t)}-\omega^{(t)}(\widetilde{\nabla}_XY)-\omega^{(t)}(F^{k,l}(X,Y))\\
\nonumber
&=(\widetilde{\nabla}_X\omega^{(t)})(Y)-\omega^{(t)}(F^{k,l}(X,Y)),
\end{align}
where the last equality follows from (\ref{CovDerDiffForm}).  From this, we see that $\nabla\eta\equiv 0$ iff (\ref{ParallelBMProp1}) is satisfied.
\end{proof}
\begin{remark}
Since $F^{1,1}\equiv 0$, we see from Proposition \ref{ParallelBMProp} that $\nabla\eta\equiv 0$ implies that $\omega^{(1)}$ is parallel with respect to the affine connection $\widetilde{\nabla}$, that is, $(\widetilde{\nabla}_X\omega^{(1)})(Y)= 0$  for all $X,Y\in A^1(M)$.
\end{remark}
\noindent Proposition \ref{ParallelBMProp} suggests that an induced higher connection has little chance of satisfying $\nabla\eta\equiv 0$.  Without placing very restrictive conditions on $\eta$, any higher connection satisfying $\nabla\eta\equiv 0$ will, in general, have non-zero twist fields.  In other words, a non-induced higher connection is needed to satisfy $\nabla\eta\equiv 0$.  So we see that by equipping the full exterior bundle $\wedge^\bullet TM$ with an associative bilinear form $\eta$ and then demanding that $\nabla\eta\equiv 0$, the notion of a non-induced higher connection is required.  

Given $\eta\in \mc{B}(M)$, we seek a higher connection $\nabla$ such that
\begin{itemize}
\item[(a)] $\nabla\eta\equiv 0$
\item[(b)] $T^\nabla\equiv 0$ (where $T^\nabla$ denotes the higher torsion of $\nabla$).
\end{itemize}
Let $(\widetilde{\nabla},\{F^{k,l}\})$ be the unique pair associated with $\nabla$ by Theorem \ref{HigherConnectionData}.  By Theorem \ref{TorsionThmB}, $\nabla$ is torsion-free iff $\widetilde{\nabla}$ is torsion-free as an affine connection on $TM$, and the twist fields satisfy 
\begin{equation}
\label{TwistFieldsTorsionRecall}
F^{k,l}(X,Y)=(-1)^{(k-1)(l-1)}F^{l,k}(Y,X)
\end{equation}
for $X\in A^k(M)$, $Y\in A^l(M)$, with $k,l> 0$ and $k+l-1\le n:=\dim M$.  Let $\{\omega^{(t)}\}$ be the unique set of differential forms associated with $\eta$ by Proposition \ref{BMDiffFormProp}.  It follows from Proposition \ref{ParallelBMProp} and equation (\ref{TwistFieldsTorsionRecall}) that any higher connection which satisfies (a) and (b) simultaneously must also satisfy the condition
\begin{equation}
\label{NecessaryConditionAB}
(\widetilde{\nabla}_X\omega^{(t)})(Y)=(-1)^{(k-1)(l-1)}(\widetilde{\nabla}_Y\omega^{(t)})(X)
\end{equation}
for all $X\in A^k(M)$, $Y\in A^l(M)$, with $k,l> 0$ and $k+l-1\le n$, where $t:=k+l-1$.  Unfortunately, equation (\ref{NecessaryConditionAB}) does not hold in general, which means having a higher connection which satisfies (a) and (b) simultaneously is not possible.  However, all is not quite lost, since (\ref{NecessaryConditionAB}) does hold when $X\wedge Y=0$ as the next result shows. 

\begin{proposition}
\label{FlipXYProp}
Let $\widetilde{\nabla}$ be a torsion free affine connection on $TM$ and extend $\widetilde{\nabla}$ to an induced higher connection.  In addition, let $X\in A^k(M)$, $Y\in A^l(M)$, and $\omega\in \Omega^{k+l-1}(M)$, with $k,l>0$ and $k+l-1\le n$.  If $X\wedge Y=0$, then 
\begin{equation}
\nonumber
(\widetilde{\nabla}_X\omega)(Y)=(-1)^{(k-1)(l-1)}(\widetilde{\nabla}_Y\omega)(X).
\end{equation}
\end{proposition}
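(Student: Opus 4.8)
The plan is to read off the identity as an immediate specialization of the formula (\ref{P924a}) for $\nabla_{X\wedge Y}\omega$ to the degenerate case $X\wedge Y=0$. The key observation is one of degrees: when $\omega$ has degree exactly $k+l-1$, the quantity $\widetilde{\nabla}_{X\wedge Y}\omega$ is a $0$-form, $\widetilde{\nabla}_X\omega$ is an $l$-form, and $\widetilde{\nabla}_Y\omega$ is a $k$-form, so that (\ref{P924a}) degenerates into a scalar identity relating the two smooth functions $(\widetilde{\nabla}_X\omega)(Y)$ and $(\widetilde{\nabla}_Y\omega)(X)$.

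First I would check that the hypotheses of the theorem producing (\ref{P924a}) hold: by assumption $\widetilde{\nabla}$ is a torsion-free affine connection on $TM$ extended to an induced higher connection, and Proposition \ref{VanishingTorsionProp} says that such a connection is torsion-free as an induced higher connection. Applying (\ref{P924a}) with this $\widetilde{\nabla}$ and $\omega\in\Omega^{k+l-1}(M)$, and rewriting the two interior products via $i_Y\alpha=\alpha(Y)$ for an $l$-form $\alpha$ and $Y\in A^l(M)$ (part (ii) of the definition of the interior product by a multivector field), I get
\begin{equation}
\nonumber
\widetilde{\nabla}_{X\wedge Y}\omega=(-1)^l(\widetilde{\nabla}_X\omega)(Y)+(-1)^{k(l-1)}(\widetilde{\nabla}_Y\omega)(X).
\end{equation}
Since the right-hand side of the defining formula (\ref{CovDerDiffForm}) is linear in the multivector-field argument of fixed degree, $\widetilde{\nabla}_Z\omega=0$ whenever $Z=0$; applying this to $Z=X\wedge Y=0$ kills the left-hand side, and solving for $(\widetilde{\nabla}_X\omega)(Y)$ gives $(\widetilde{\nabla}_X\omega)(Y)=(-1)^{k(l-1)+l+1}(\widetilde{\nabla}_Y\omega)(X)$.

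The last point is the only one requiring any care: I would verify that $k(l-1)+l+1$ and $(k-1)(l-1)$ agree modulo $2$. Since $k(l-1)+l+1=kl-k+l+1$ and $(k-1)(l-1)=kl-k-l+1$ differ by $2l$, the two signs coincide, and the claimed identity $(\widetilde{\nabla}_X\omega)(Y)=(-1)^{(k-1)(l-1)}(\widetilde{\nabla}_Y\omega)(X)$ follows. I do not expect a genuine obstacle here: the content is entirely carried by (\ref{P924a}) and Proposition \ref{VanishingTorsionProp}, and what remains is the degree and sign bookkeeping described above.
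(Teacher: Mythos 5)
Your proof is correct, and it takes a different route from the paper's. You obtain the identity as an immediate corollary of the theorem at the end of Section 4: since $\widetilde{\nabla}$ is torsion-free as an affine connection, Proposition \ref{VanishingTorsionProp} makes it torsion-free as an induced higher connection, so (\ref{P924a}) applies with $m=k+l-1$; the interior products reduce to evaluations because $\widetilde{\nabla}_X\omega\in\Omega^{l}(M)$ and $\widetilde{\nabla}_Y\omega\in\Omega^{k}(M)$; the left-hand side vanishes because $\widetilde{\nabla}_Z\omega$ is linear in $Z$ of fixed degree (indeed $C^\infty(M)$-linear by Theorem \ref{CovDerDiffFormProp2}-(i)), so $Z=X\wedge Y=0$ gives zero; and your sign check $k(l-1)+l+1\equiv(k-1)(l-1)\pmod 2$ is right. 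There is no circularity, since the Section 4 theorem is proved before and independently of Proposition \ref{FlipXYProp}. The paper instead argues directly from the definition (\ref{CovDerDiffForm}): it rewrites $L_Xi_Y\omega$ via Proposition \ref{LieDerivativeProp}-(ii), uses Proposition \ref{LieDerivativeProp}-(iv) together with $X\wedge Y=0$ to identify $(-1)^{l-1}L_Yi_X\omega$ with $i_YL_X\omega$, and invokes the vanishing of the higher torsion to match the remaining terms. The two arguments ultimately rest on the same identities (the theorem you cite is itself proved from Proposition \ref{LieDerivativeProp} and torsion-freeness), but yours is shorter because the work is already packaged in (\ref{P924a}), whereas the paper's computation is self-contained and only uses what is needed in the degenerate case $X\wedge Y=0$.
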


\begin{proof}
Let $t:=k+l-1$.  Using  (\ref{CovDerDiffForm}) and the fact that $(-1)^{(k-1)(t-1)}=(-1)^{(k-1)l}$ and $(-1)^{(l-1)(t-1)}=(-1)^{(l-1)k}$ gives
\begin{align}
\label{FlipXY1}
(\widetilde{\nabla}_X\omega)(Y)&=(-1)^{(k-1)l}L_Xi_Y\omega-\omega(\widetilde{\nabla}_XY)
\end{align}
and 
\begin{align}
\label{FlipXY2}
(\widetilde{\nabla}_Y\omega)(X)&=(-1)^{(l-1)k}L_Yi_X\omega-\omega(\widetilde{\nabla}_YX).
\end{align}
Applying Proposition \ref{LieDerivativeProp}-(ii) to (\ref{FlipXY1}) gives 
\begin{align}
\label{FlipXY3}
(\widetilde{\nabla}_X\omega)(Y)&=\omega([X,Y])+i_YL_X\omega-\omega(\widetilde{\nabla}_XY).
\end{align}
Next, multiply (\ref{FlipXY2}) by $s:=(-1)^{(k-1)(l-1)}$ to obtain
\begin{align}
\label{FlipXY4}
s(\widetilde{\nabla}_Y\omega)(X)&=(-1)^{l-1}L_Yi_X\omega-s\omega(\widetilde{\nabla}_YX).
\end{align}
Since $X\wedge Y=0$ (by hypothesis), Proposition \ref{LieDerivativeProp}-(iv) implies
\begin{align}
\label{FlipXY5}
(-1)^{l-1}L_Yi_X\omega=i_YL_X\omega.
\end{align} 
Subsituting (\ref{FlipXY5}) into (\ref{FlipXY4}) gives 
\begin{equation}
\label{FlipXY6}
s(\widetilde{\nabla}_Y\omega)(X)=i_YL_X\omega -s\omega(\widetilde{\nabla}_YX).
\end{equation}
Since $\widetilde{\nabla}$ is torsion-free as an affine connection, its higher torsion (as an induced higher connection) also vanishes by Proposition \ref{VanishingTorsionProp}.  Hence, (\ref{FlipXY3}) can be rewritten as
\begin{equation}
\label{FlipXY7}
(\widetilde{\nabla}_X\omega)(Y)=i_YL_X\omega-s\omega(\widetilde{\nabla}_YX).
\end{equation}
A quick comparison of (\ref{FlipXY7}) and (\ref{FlipXY6}) completes the proof.  
\end{proof}
\noindent Motivated by Proposition \ref{FlipXYProp} and the above discussion, we introduce the notion of \textit{almost torsion-free}:
\begin{definition}
\label{AlmostTorsionFreeDef}
Let $\nabla$ be a higher connection and let $T$ denote its higher torsion.  Then $\nabla$ is \textit{almost torsion-free} if
\begin{itemize}
\item[1.] $T(X,Y)=0$ for all $X,Y\in A^1(M)$; and
\item[2.] $T(X,Y)=0$ for all $X\in A^k(M)$, $Y\in A^l(M)$ such that $X\wedge Y=0$. 
\end{itemize}
\end{definition}
\noindent To set up our next result, let 
\begin{equation}
\nonumber
\varphi^{(k)}: \mc{B}(M)\rightarrow \Omega^k(M),
\end{equation}
be the map given by $\eta\mapsto \omega^{(k)}$, where $\{\omega^{(t)}\}_{t=0}^n$ is the set of differential forms associated with $\eta$ by Proposition \ref{BMDiffFormProp}.  Let $\mc{B}^\circ(M)\subset \mc{B}(M)$ be the set of all $\eta\in \mc{B}(M)$ such that
\begin{itemize}
\item[1.] $\varphi^{(1)}(\eta)\equiv 0$
\item[2.] for $t>1$, if $\omega^{(t)}:=\varphi^{(t)}(\eta)\neq 0$, then $\omega^{(t)}$ is also nonvanishing, that is, $\omega^{(t)}_p\neq 0$ for all $p\in M$. 
\end{itemize}
\begin{theorem}
\label{HigherConnBMThm}
For any $\eta\in \mc{B}^\circ(M)$, $M$ admits an almost torsion-free higher connection $\nabla$ such that $\nabla\eta\equiv 0$ in the sense of Definition \ref{ParallelBMDef}.
\end{theorem}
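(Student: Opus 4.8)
The plan is to construct the higher connection directly from its classifying data $(\widetilde\nabla,\{F^{k,l}\})$ of Theorem \ref{HigherConnectionData}, translating the requirement $\nabla\eta\equiv 0$ into an equation on the twist fields via Proposition \ref{ParallelBMProp} and controlling the torsion via Proposition \ref{FlipXYProp}. The hypothesis $\eta\in\mc{B}^\circ(M)$ enters precisely at the point where one must ``invert'' $\omega^{(t)}$.

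First I would fix any torsion-free affine connection $\widetilde\nabla$ on $TM$ (for instance the Levi-Civita connection of an arbitrary Riemannian metric, or the symmetrization of any connection) and view it as an induced higher connection. Let $\{\omega^{(t)}\}_{t=0}^n$ be the forms attached to $\eta$ by Proposition \ref{BMDiffFormProp}. By the definition of $\mc{B}^\circ(M)$ we have $\omega^{(1)}\equiv 0$, and for each $t\ge 2$ either $\omega^{(t)}\equiv 0$ or $\omega^{(t)}$ is nowhere vanishing. In the latter case, under the identification $(\wedge^t T_pM)^\ast\simeq\wedge^t T^\ast_pM$ of Proposition \ref{VectorCovectorProp}, $\omega^{(t)}_p$ is a nonzero functional on $\wedge^t T_pM$, so the fibrewise-affine subset $\{v\in\wedge^t TM:\omega^{(t)}(v)=1\}$ has nonempty fibres; choosing a solution on each chart and patching with a partition of unity (a convex combination of solutions of $\omega^{(t)}(w)=1$ again solves it) produces a smooth global section $w_t\in A^t(M)$ with $\omega^{(t)}(w_t)\equiv 1$. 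When $\omega^{(t)}\equiv 0$ I set $w_t:=0$.

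Next I would define, for $k,l>0$ with $t:=k+l-1\le n$,
\[
F^{k,l}(X,Y):=\bigl((\widetilde\nabla_X\omega^{(t)})(Y)\bigr)\,w_t,\qquad X\in A^k(M),\ Y\in A^l(M),
\]
and $F^{k,l}\equiv 0$ otherwise. Since $X\mapsto\widetilde\nabla_X\omega^{(t)}$ is $C^\infty(M)$-linear by Theorem \ref{CovDerDiffFormProp2}-(i) and $\widetilde\nabla_X\omega^{(t)}$ is itself an $l$-form, the map $(X,Y)\mapsto(\widetilde\nabla_X\omega^{(t)})(Y)$ is a section of $\wedge^k T^\ast M\otimes\wedge^l T^\ast M$, so $F^{k,l}$ is a section of $E^{k,l}$; moreover $F^{1,1}\equiv 0$ because $\omega^{(1)}\equiv 0$. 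Theorem \ref{HigherConnectionData} then yields a unique higher connection $\nabla$ with classifying pair $(\widetilde\nabla,\{F^{k,l}\})$. To see $\nabla\eta\equiv 0$, invoke Proposition \ref{ParallelBMProp}: when $\omega^{(t)}\neq 0$ one has $\omega^{(t)}(F^{k,l}(X,Y))=(\widetilde\nabla_X\omega^{(t)})(Y)\cdot\omega^{(t)}(w_t)=(\widetilde\nabla_X\omega^{(t)})(Y)$, and when $\omega^{(t)}\equiv 0$ both sides of (\ref{ParallelBMProp1}) vanish, so (\ref{ParallelBMProp1}) holds for all $k,l>0$. Finally, for almost torsion-freeness: condition 1 of Definition \ref{AlmostTorsionFreeDef} holds because $F^{1,1}\equiv 0$ forces $\nabla$ and $\widetilde\nabla$ to agree on $A^1(M)\times A^1(M)$, and $\widetilde\nabla$ is torsion-free; for condition 2 I would reuse the identity from the proof of Theorem \ref{TorsionThmB}, namely $T(X,Y)=\widetilde T(X,Y)+F^{k,l}(X,Y)-(-1)^{(k-1)(l-1)}F^{l,k}(Y,X)$, where $\widetilde T\equiv 0$ by Proposition \ref{VanishingTorsionProp}. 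Since $F^{k,l}$ and $F^{l,k}$ share the same $w_t$, this gives
\[
T(X,Y)=\bigl[(\widetilde\nabla_X\omega^{(t)})(Y)-(-1)^{(k-1)(l-1)}(\widetilde\nabla_Y\omega^{(t)})(X)\bigr]\,w_t,
\]
which vanishes whenever $X\wedge Y=0$ by Proposition \ref{FlipXYProp}.

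The main obstacle — really the place where the structure of $\mc{B}^\circ(M)$ is indispensable — is solving $\omega^{(t)}(w_t)\equiv 1$ globally: without $\omega^{(1)}\equiv 0$ one cannot have $F^{1,1}\equiv 0$ together with (\ref{ParallelBMProp1}), and without the ``zero or nowhere vanishing'' dichotomy for $t\ge 2$ there need be no global $w_t$, so there is no uniform formula producing twist fields that realize $\nabla\eta\equiv 0$. Everything else is assembling results already established — Theorems \ref{HigherConnectionData} and \ref{TorsionThmB}, Propositions \ref{ParallelBMProp}, \ref{FlipXYProp}, and \ref{VanishingTorsionProp} — together with the routine partition-of-unity construction of $w_t$.
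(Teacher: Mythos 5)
Your proposal is correct and follows essentially the same route as the paper: fix a torsion-free affine connection $\widetilde\nabla$, define rank-one twist fields $F^{k,l}(X,Y)=\bigl[(\widetilde\nabla_X\omega^{(t)})(Y)\bigr]\,w_t$ with $t=k+l-1$, verify $\nabla\eta\equiv 0$ via Proposition \ref{ParallelBMProp}, and verify almost torsion-freeness via the decomposition of the higher torsion from Theorem \ref{TorsionThmB} together with Propositions \ref{VanishingTorsionProp} and \ref{FlipXYProp}. The only difference is cosmetic: the paper builds its normalized $t$-vector field $E^{(t)}$ with $\omega^{(t)}(E^{(t)})=1$ by raising indices with an auxiliary Riemannian metric and dividing by $\langle\omega^{(t)},\omega^{(t)}\rangle$, whereas you obtain $w_t$ by patching local solutions of $\omega^{(t)}(w)=1$ with a partition of unity (valid, since convex combinations of solutions are solutions); both constructions serve the identical purpose.
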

\begin{proof}
Let $\omega^{(t)}:=\varphi^{(t)}(\eta)$ for $t=0,1,\dots, n$ and let $\mc{T}:=\{t~|~\omega^{(t)}\neq 0\}$.  Fix a Riemannian metric $g$ on $M$.  Let $\overline{g}$ denote the inverse metric on $T^\ast M$.  Recall that if $g$ is expressed in local coordinates as 
\begin{equation}
\nonumber
g=\sum_{i,j} g_{ij}dx^i\otimes dx^j,
\end{equation}
then 
\begin{equation}
\nonumber
\overline{g}=\sum_{i,j}g^{ij}\frac{\partial}{\partial x^i}\otimes \frac{\partial}{\partial x^j}
\end{equation}
where $(g^{ij})=(g_{ij})^{-1}$.  For decomposable $k$-forms $\omega:=\omega^1\wedge \cdots \wedge \omega^k$ and $\phi:=\phi^1\wedge \cdots \wedge \phi^k$ define
\begin{equation}
\nonumber
\langle\omega,\phi\rangle := \det(\overline{g}(\phi^i,\omega^j)).
\end{equation}
By linearity, $\langle\cdot, \cdot\rangle$ extends to a smooth, symmetric, positive definite bilinear form on the exterior bundle $\wedge^k T^\ast M$.  For each $t\in \mc{T}$ with $t>0$, let $E^{(t)}\in A^t(M)$, be the $t$-vector field defined by
\begin{equation}
\label{EtDef}
E^{(t)}:=\frac{1}{\langle\omega^{(t)},\omega^{(t)}\rangle}(\omega^{(t)})^\sharp,
\end{equation}
where $(\omega^{(t)})^\sharp$ is the $t$-vector field obtained by raising the indices of $\omega$ with $\overline{g}$.  Recall that by hypothesis, $\omega^{(t)}$ is non-vanishing on $M$.   Hence, $\langle\omega^{(t)},\omega^{(t)}\rangle|_{p}\neq 0$ for all $p\in M$.  (\ref{EtDef}) then implies
\begin{align}
\label{EtIdentity}
\omega^{(t)}(E^{(t)})&=\frac{1}{\langle\omega^{(t)},\omega^{(t)}\rangle}\omega^{(t)}((\omega^{(t)})^\sharp)=\frac{1}{\langle\omega^{(t)},\omega^{(t)}\rangle}\langle\omega^{(t)},\omega^{(t)}\rangle=1.
\end{align}
We will now use $\{E^{(t)}\}$ to construct an almost torsion-free higher connection satisfying $\nabla\eta\equiv 0$.  

By Theorem \ref{HigherConnectionData}, a higher connection is determined by an affine connection $\widetilde{\nabla}$ on $TM$ and a set of twist fields $\{F^{k,l}\}$ where $k,l>0$ and $k+l-1\le n:=\dim M$.  To obtain the desired higher connection, let $\widetilde{\nabla}$ be any torsion-free affine connection on $TM$ (e.g., take $\widetilde{\nabla}$ to be the Levi-Civita connection associated to $g$).  By Proposition \ref{ParallelBMProp}, the twist fields must be chosen so that
\begin{equation}
\label{ChooseFklA}
(\widetilde{\nabla}_X\omega^{(t)})(Y)=\omega^{(t)}(F^{k,l}(X,Y))
\end{equation}
for all $X\in A^k(M)$, $Y\in A^l(M)$ with $k,l>0$, where $t=k+l-1\le n:=\dim M$. We will now construct a complete set of twist fields $\{F^{k,l}\}$ ($k,l>0$, $k+l-1\le n$) which satisfy (\ref{ChooseFklA}).

Now, for $t\notin \mc{T}$ ($t\le n$), we have $\omega^{(t)}\equiv 0$ by definition.  Consequently, for each $t\notin \mc{T}$ we can set $F^{k,l}\equiv 0$ for all $k,l>0$ satisfying $k+l-1=t$.  This clearly satisfies (\ref{ChooseFklA}).  Note that by hypothesis, $1\notin \mc{T}$, and with our choice, we also have $F^{1,1}\equiv 0$ (as required by Theorem \ref{HigherConnectionData}).  

Now for each $t\in \mc{T}$, we define $F^{k,l}$ with $k,l>0$ and $k+l-1=t$ by  
\begin{equation}
\label{ChooseFklB}
F^{k,l}(X,Y):=\left[(\widetilde{\nabla}_X\omega^{(t)})(Y)\right]E^{(t)}\in A^{k+l-1}(M)
\end{equation}
for all $X\in A^k(M)$, $Y\in A^l(M)$.  Note that $(\widetilde{\nabla}_X\omega^{(t)})(Y)\in C^\infty(M)$, and that $F^{k,l}(fX,Y)=F^{k,l}(X,fY)=fF^{k,l}(X,Y)$ for all $f\in C^\infty(M)$ by Theorems \ref{CovDerDiffFormProp} and \ref{CovDerDiffFormProp2}.  It follows easily from (\ref{EtIdentity}) that (\ref{ChooseFklB}) satisfies  (\ref{ChooseFklA}).  This completes the construction of the twist fields.  

With $\widetilde{\nabla}$ and $\{F^{k,l}\}$ in hand, we define $\nabla$ to be the higher connection which is uniquely determined by the pair  $(\widetilde{\nabla},\{F^{k,l}\})$. Proposition \ref{ParallelBMProp} then implies that $\nabla\eta\equiv 0$ in the sense of Definition \ref{ParallelBMDef}.  All that remains to be done now is to show that $\nabla$ is almost torsion-free.  To do this, let $T$ denote the higher torsion of $\nabla$. Since $\widetilde{\nabla}$ is torsion-free, we have $T(X,Y)=0$ for all $X, Y\in A^1(M)$, which is the first condition of Definition \ref{AlmostTorsionFreeDef}.  To verify the second condition, let $X\in A^k(M)$, $Y\in A^l(M)$ with $k,l>0$, $k+l-1\le n$, and $X\wedge Y=0$.  With $\widetilde{\nabla}$ torsion-free, the proof of Theorem \ref{TorsionThmB} shows that $T(X,Y)=0$ iff the twist fields satisfy
\begin{equation}
\label{CheckAlmostTorsionFree1}     
F^{k,l}(X,Y)=(-1)^{(k-1)(l-1)}F^{l,k}(Y,X).
\end{equation}
We now verify (\ref{CheckAlmostTorsionFree1}).  For $k+l-1\notin \mc{T}$, we have
\begin{equation}
F^{k,l}(X,Y)=0=(-1)^{(k-1)(l-1)}F^{l,k}(X,Y).
\end{equation}
For $t=k+l-1\in \mc{T}$, we have
\begin{align}
\nonumber
F^{k,l}(X,Y)&=[(\widetilde{\nabla}_X\omega^{(t)})(Y)]E^{(t)}\\
\nonumber
&=(-1)^{(k-1)(l-1)}[(\widetilde{\nabla}_Y\omega^{(t)})(X)]E^{(t)}\\
\nonumber
&=(-1)^{(k-1)(l-1)}F^{l,k}(Y,X),
\end{align}
where the second equality follows from Proposition \ref{FlipXYProp}.  This completes the proof.
\end{proof}
Using Theorem \ref{HigherConnBMThm}, we can relate higher connections to \textit{multisymplectic geometry} \cite{CIL} (or \textit{higher symplectic geometry} if one follows the terminology of \cite{Rog}).   To start, we recall the notion of a multisymplectic form of degree $t+1$ (or, more concisely, a $t$-plectic form):
\begin{definition}
A $(t+1)$-form $\omega$ on $M$ is multisymplectic of degree $t+1$ (or $t$-plectic) if it satisfies the following two conditions:
\begin{itemize}
\item[(i)] $d\omega=0$
\item[(ii)] $\omega$ is non-degenerate in the sense that for all $p\in M$ and $v\in T_pM$, 
\begin{equation}
\nonumber
i_v\omega=0 \Leftrightarrow v=0.
\end{equation}
The pair $(M,\omega)$ is then called a multisymplectic manifold of order $t+1$ (or \textit{$t$-plectic manifold}).
\end{itemize}
\end{definition}
\begin{remark}
Following the terminology of \cite{Rog}, a 1-plectic form is just a symplectic form on $M$.
\end{remark}
To relate $t$-plectic forms to higher connections, let $\mc{B}^{plc}(M)$ be the set of all $\eta\in \mc{B}(M)$ such that 
\begin{itemize}
\item[(i)] $\varphi^{(1)}(\eta)\equiv 0$
\item[(ii)]for $t>1$, if $\varphi^{(t)}(\eta)\neq 0$, then $\varphi^{(t)}$ is a $(t-1)$-plectic form.
\end{itemize}
Since a $(t-1)$-plectic form is necessarily non-vanishing, we immediately have $\mc{B}^{plc}(M)\subset \mc{B}^\circ(M)$.  Theorem \ref{HigherConnBMThm} then implies the following:
\begin{corollary}
\label{tPlecticCor}
Let $\eta\in \mc{B}^{plc}(M)$. Then $M$ admits an almost torsion-free higher connection $\nabla$ such that $\nabla\eta\equiv 0$ in the sense of Definition \ref{ParallelBMDef}
\end{corollary}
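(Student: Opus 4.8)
The plan is to deduce Corollary \ref{tPlecticCor} directly from Theorem \ref{HigherConnBMThm} by verifying the inclusion $\mc{B}^{plc}(M) \subseteq \mc{B}^\circ(M)$. Fix $\eta \in \mc{B}^{plc}(M)$ and let $\{\omega^{(t)}\}_{t=0}^n$ be the set of differential forms associated to $\eta$ by Proposition \ref{BMDiffFormProp}, so that $\omega^{(t)} = \varphi^{(t)}(\eta)$. First I would check condition 1 in the definition of $\mc{B}^\circ(M)$: this is immediate, since condition (i) in the definition of $\mc{B}^{plc}(M)$ is precisely $\varphi^{(1)}(\eta) \equiv 0$.

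Next I would check condition 2 in the definition of $\mc{B}^\circ(M)$, namely that whenever $t > 1$ and $\omega^{(t)} \neq 0$, the form $\omega^{(t)}$ is nonvanishing. By condition (ii) in the definition of $\mc{B}^{plc}(M)$, in this case $\omega^{(t)}$ is a $(t-1)$-plectic form; in particular it is non-degenerate in the sense that for every $p \in M$ and every $v \in T_pM$, $i_v\omega^{(t)} = 0$ forces $v = 0$. If $\omega^{(t)}_p$ were zero at some point $p$, then $i_v\omega^{(t)}_p = 0$ for \emph{every} $v \in T_pM$, contradicting non-degeneracy (note $\dim T_pM = n \geq t \geq 2 > 0$, so a nonzero $v$ exists). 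Hence $\omega^{(t)}_p \neq 0$ for all $p \in M$, which is exactly condition 2. Therefore $\eta \in \mc{B}^\circ(M)$.

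Having established $\eta \in \mc{B}^\circ(M)$, I would simply invoke Theorem \ref{HigherConnBMThm}: it produces an almost torsion-free higher connection $\nabla$ on $M$ with $\nabla\eta \equiv 0$ in the sense of Definition \ref{ParallelBMDef}, which is the assertion of the corollary. There is essentially no obstacle in this step-by-step deduction — the only nontrivial input is the observation that a multisymplectic form is nowhere zero, which is an elementary consequence of its defining non-degeneracy condition; all the substantive work (the explicit construction of the twist fields via the $t$-vector fields $E^{(t)}$ built from a Riemannian metric, and the verification of almost torsion-freeness using Proposition \ref{FlipXYProp}) has already been carried out in the proof of Theorem \ref{HigherConnBMThm}.
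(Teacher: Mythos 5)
Your proposal is correct and follows essentially the same route as the paper: the paper also deduces the corollary by observing that a $(t-1)$-plectic form is necessarily non-vanishing, so $\mc{B}^{plc}(M)\subset \mc{B}^\circ(M)$, and then invoking Theorem \ref{HigherConnBMThm}. Your spelled-out check that degeneracy at a point would contradict pointwise non-degeneracy is exactly the implicit content of the paper's one-line argument.
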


\section{Conclusion}
In this paper, the notion of higher connections has been introduced as part of a program in differential geometry to extend the familiar constructions and operations for vector fields to multivector fields (MVFs).  The aforementioned program is motivated by generalized geometry and string theory, and is based on the idea of treating the full exterior bundle $\wedge^\bullet TM$ as an extended tangent bundle with the Schouten-Nijenhuis bracket playing the role of the Lie bracket of vector fields.     Consequently, in the context of this program, a higher connection on the full exterior bundle $\wedge^\bullet TM$ is the analogue of an affine connection on the tangent bundle $TM$.  

In section 5, we equipped the full exterior bundle $\wedge^\bullet TM$ with an associative bilinear form $\eta$ and showed that such a structure can be naturally identified with a collection of differential forms $\{\omega^{(t)}\}$ of various degrees.  This fact allowed one to take the covariant derivative of $\eta$ with respect to a higher connection.  The natural problem of finding a higher connection $\nabla$ which satisfies $\nabla\eta\equiv 0$ naturally leads to the notion of a non-induced higher connection; the differential forms associated with $\eta$ determine the twist fields of the higher connection.  For any $\eta\in \mc{B}^\circ(M)$, 
Theorem \ref{HigherConnBMThm} shows that $M$ admits an almost torsion-free higher connection $\nabla$ which satisfies $\nabla\eta\equiv 0$.  However, the higher connection constructed in the proof of Theorem \ref{HigherConnBMThm} is by no means unique or canonical, and this raises the following question:
\begin{itemize}
\item[ ] \textit{What conditions could be placed on the associative bilinear form $\eta$ which would give rise to a unique or canonical higher connection?  In other words, is there a ``best" choice of higher connection?}
\end{itemize}  
Corollary \ref{tPlecticCor}, an immediate consequence of Theorem \ref{HigherConnBMThm}, links higher connections to  multisymplectic geometry by restricting attention to all $\eta$ which are built up from multisymplectic forms of various degrees.  The question raised above as well as the  relationship between higher connections and multisymplectic geometry (which was only touched upon in this paper) will be explored in greater depth as part of future work.

\end{document}